

\documentclass[11pt,reqno]{article}
\usepackage{amsmath,amsfonts,amsthm,amssymb}
\usepackage{graphics}
\usepackage[all]{xy}
\usepackage{latexsym}
\usepackage{color}


\theoremstyle{definition}
\newtheorem{thm}{Theorem}[section]
\newtheorem{prop}[thm]{Proposition}

\newtheorem{lem}[thm]{Lemma}
\newtheorem{rem}[thm]{Remark}

\newtheorem{examp}[thm]{Example}
\newtheorem{conj}[thm]{Conjecture}

\renewcommand{\qed}{\hfill$\square$}


\begin{document}

\title{The canonical genus for Whitehead doubles of a family of alternating knots
}

\author{Hee Jeong Jang
\\{\it Department of Mathematics, Graduate School of Natural Sciences}\\
{\it Pusan National University, Busan 609-735, Korea}\\{\it E-mail: 7520jhj@hanmail.net}\\ \\and\\ \\Sang Youl Lee
\\{\it Department of Mathematics, Pusan National University,}\\
{\it Busan 609-735, Korea}\\{\it E-mail: sangyoul@pusan.ac.kr}}

\maketitle 

\begin{abstract}
For any given integer $r \geq 1$ and a quasitoric braid $\beta_r=(\sigma_r^{-\epsilon}
\sigma_{r-1}^{\epsilon}\cdots$ $ \sigma_{1}^{(-1)^{r}\epsilon})^3$ with $\epsilon=\pm 1$, we prove that the maximum degree in $z$ of the HOMFLYPT polynomial $P_{W_2(\hat\beta_r)}(v,z)$ of the doubled link $W_2(\hat\beta_r)$ of the closure $\hat\beta_r$ is equal to $6r-1$. As an application, we give a family $\mathcal K^3$ of alternating knots, including $(2,n)$ torus knots, $2$-bridge knots and alternating pretzel knots as its subfamilies, such that the minimal crossing number of any alternating knot in $\mathcal K^3$ coincides with the canonical genus of its Whitehead double. Consequently, we give a new family $\mathcal K^3$ of alternating knots for which Tripp's conjecture holds.
\end{abstract}

\noindent{\it 2010 Mathematics Subject Classification}:  57M25, 57M27.

\noindent{\it Key words and phrases}:
alternating knot, crossing number, canonical genus, 2-bridge knot, Morton's inequality, pretzel knot, quasitoric braid, Whitehead double, Tripp's conjecture.


\section{Introduction}\label{sect-intr}

A knot is an ambient isotopy class of an oriented $1$-sphere $S^1$ smoothly embedded in the
$3$-sphere $S^3$ with a fixed standard orientation, otherwise specified. Satellite construction is one of frequently used machineries to obtain
a new knot from an arbitrary given knot.
One of famous families of satellite knots is that of $m$-twisted positive  Whitehead doubles $W_+(K,m)$
and negative Whitehead doubles $W_-(K,m)$ ($m \in \mathbb Z$), which are the satellites of knots $K$ with positive Whitehead clasp $W_+$ and negative Whitehead clasp $W_-$ as patterns, respectively (see Section 2). 

A remarkable feature of Whitehead doubles is well known facts
that the Alexander polynomial and the signature invariant of the
$0$-twisted Whitehead double of an arbitrary given knot are identical to those of the trivial knot.
Also, they have the genus one and have the unknotting number one.
In fact, Whitehead doubles are characterized as follows:
A non-trivial knot is a Whitehead double of a knot if and only if
its minimal genus and unknotting number are both $1$ \cite{ST}.

In 2002, Tripp \cite{Tri} showed that the canonical genus of a Whitehead double of a torus knot $T(2,n)$ of type $(2,n)$ is equal to $n$, the minimal crossing number of $T(2,n)$, and conjectured that the minimal crossing number of any knot coincides with the canonical genus of its Whitehead double. 
In \cite{Nak}, Nakamura has extended the tripp's argument to show that for $2$-bridge knots, Tripp's conjecture holds. He also found a non-alternating knot of which the minimal crossing number is not equal to the canonical genus of its Whitehead double and so he modified the Tripp's conjecture to the following:

\begin{conj}\label{Nakam-conj-0}
The minimal crossing number of any alternating knot coincides with the canonical genus of its Whitehead double. 
\end{conj}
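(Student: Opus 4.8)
The plan is to prove, for an arbitrary alternating knot $K$, the two inequalities $g_c(W(K)) \le c(K)$ and $g_c(W(K)) \ge c(K)$ separately, where $c(K)$ is the minimal crossing number and $g_c$ the canonical genus of the Whitehead double $W(K)$. The first inequality is the easy, diagram-theoretic direction and holds for \emph{every} knot: starting from a minimal diagram $D$ of $K$ with $c(K)$ crossings, one forms the standard diagram of the (untwisted) Whitehead double as the anti-parallel $2$-cable of $D$ together with a single clasp, and a direct count shows that Seifert's algorithm applied to this diagram produces a connected Seifert surface of genus $c(K)$. Hence $g_c(W(K)) \le c(K)$ with no hypothesis on $K$, and the whole content of the conjecture is the reverse inequality $g_c(W(K)) \ge c(K)$, which is where alternation must enter.

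For the lower bound I would use Morton's inequality exactly in the way the main theorem is designed to feed it. Morton's inequality bounds the canonical genus from below by (half of) the maximal $z$-degree of the HOMFLYPT polynomial of the double, so, via the standard relation between $W(K)$ and the doubled link $W_2(K)$, it suffices to prove that for every alternating knot $K$,
$$\max\deg_z P_{W_2(K)}(v,z) = 2c(K) - 1,$$
the value $6r-1 = 2(3r)-1$ computed for the family $\hat\beta_r$ being the model case. Combined with the upper bound (which forces $\le$), this reduces the entire problem to a \emph{single} assertion about the top $z$-degree coefficient of the HOMFLYPT polynomial of the $2$-parallel of an arbitrary reduced alternating diagram: that this extreme coefficient is nonzero, i.e. that no cancellation occurs in top $z$-degree.

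To establish the non-cancellation I would try to exploit the rigid combinatorics of alternating diagrams. By the Tait/Kauffman--Murasugi--Thistlethwaite theory a reduced alternating diagram is minimal and adequate, and one hopes that the relevant adequacy is inherited by the $2$-parallel so that the extreme term of $P_{W_2(K)}$ is pinned down by a purely combinatorial (state-sum or spanning-tree) formula in which all contributions to the top $z$-degree carry the same sign. Concretely, I would set up an induction on the number of crossings by applying the HOMFLYPT skein relation at the crossings of $D$ and tracking the leading $z$-coefficient through each resolution; the rational-tangle recursion behind Nakamura's $2$-bridge case and the explicit braid-word computation behind the family $\hat\beta_r$ would serve as the two worked prototypes to be generalized.

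The main obstacle is precisely this non-cancellation step for \emph{arbitrary} alternating knots. For the base knot $K$ itself alternation is very powerful: a reduced alternating diagram is homogeneous, and for homogeneous diagrams Morton's $z$-degree inequality is sharp, so the canonical genus of $K$ equals its Seifert genus. The trouble is that this sharpness does not survive cabling, since the $2$-parallel $W_2(K)$ used to build the double is in general neither alternating nor homogeneous, and passing to it can introduce cancellations in top $z$-degree that no existing combinatorial criterion rules out. This is exactly why the present paper proves the conjecture only for the structured family $\mathcal K^3$, where the quasitoric-braid word makes the leading $z$-coefficient computable by hand. A general proof would require a uniform non-cancellation theorem for the top $z$-degree of the HOMFLYPT polynomial of the double $W_2(K)$ of an arbitrary alternating $K$; supplying such a theorem, rather than verifying it family by family, is the crux that keeps the conjecture open.
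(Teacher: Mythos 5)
You were asked to prove a statement that the paper itself records only as a conjecture (due to Nakamura, modifying Tripp): the paper does \emph{not} prove it, and neither do you. The paper's actual contribution is to verify the conjecture for the restricted family $\mathcal K^3=\bigcup_r \mathcal K_r$ built from closed quasitoric braids $\hat\beta_r$, via an explicit induction (Theorem \ref{main-thm-1}, whose engine is the skein-tree estimates of Lemma \ref{main-lem-1} in Section \ref{sect-pf-lemma}) combined with the Brittenham--Jensen full-twist propagation step (Proposition \ref{BJprop4-cr-nbr-cg-wd}). Your reduction of the conjecture is the correct one and matches the paper's framework exactly: the upper bound $g_c(W_\pm(K,m))\le c(K)$ holds for every knot by the canonical diagram count (Proposition \ref{prop1-cr-nbr-cg-wd}), and by Morton's inequality together with the clasp/full-twist bookkeeping of Proposition \ref{prop3-cr-nbr-cg-wd}, everything comes down to showing $\max\deg_z P_{W_2(D)}(v,z)=2c(K)-1$ for a reduced alternating diagram $D$, i.e.\ non-cancellation in the top $z$-degree of the HOMFLYPT polynomial of the antiparallel $2$-parallel.

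The genuine gap is that this non-cancellation step is never established: you state it as a hope, sketch two possible mechanisms (adequacy inherited by the $2$-parallel; a skein induction on crossings tracking leading coefficients), and then concede that neither is carried out. The adequacy heuristic in particular is doubtful as stated, since adequacy controls the extreme degrees of the Kauffman bracket and Jones/Kauffman polynomials, not the maximal $z$-degree of HOMFLYPT, and the reverse-oriented $2$-cable of a reduced alternating diagram is in general neither alternating, nor positive, nor homogeneous, so no known sharpness criterion for Morton's bound applies to it. This is precisely why the paper proceeds family by family: for $W_2(\hat\beta_r)$ the quasitoric braid structure makes each branch of the skein tree estimable by Morton's inequality with a strict deficit (the $N_r-3$ and $N_r-4$ bounds of Lemma \ref{main-lem-1}), so the two surviving top-degree branches are shown not to cancel by an explicit degree comparison -- an argument with no visible generalization to arbitrary alternating diagrams. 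In short, your proposal is an accurate map of the problem and honestly locates the crux, but it contains no proof of the crux, so it proves the conjecture for no knot beyond what is already known; as a proof attempt for the stated conjecture it is incomplete, in the same way the problem itself is open.
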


In \cite{BJ}, Brittenham and Jensen showed that Conjecture \ref{Nakam-conj-0} holds for alternating pretzel knots $P(k_1,\ldots,k_n), k_1,\ldots, k_n \geq 1$ \cite[Theorem 1]{BJ}. To prove this, they used Morton's inequality \cite{Mot} and provided a method for building new knots $K$ satisfying $\max\deg_z P_{W_\pm(K,m)}(v,z)=2c(K)$ from old ones $K'$ (For more details, see Section \ref{sec-micg} or \cite{BJ}).
Actually, Brittenham and Jensen gave a larger class of alternating knots than the class including $(2,n)$-torus knots, $2$-bridge knots, and alternating pretzel knots.
In addition, Gruber \cite{Gr1} extended Nakamura's result to algebraic alternating knots in Conway's sense in a different way. 

The main purpose of this paper is to give a new infinite family of alternating knots for which Conjecture \ref{Nakam-conj-0} holds, which is an extension of the previous results of Tripp \cite{Tri}, Nakamura \cite{Nak} and Brittenham-Jensen \cite{BJ}.

This paper is organized as follows.
In Section 2, we review Whitehead double of a knot and some known preliminary results for the canonical genus of Whitehead double of a knot. In Section 3, we review the Morton's inequality for the maximum degree in $z$ of the HOMFLYPT polynomial $P_L(v,z)$ of a link $L$ and its  relation to the canonical genus of Whitehead double of a knot. We also give a brief review of Brittenham and Jensen's method.
In Section 4, we prove that for all integer $r \geq 1$, the maximum degree in $z$ of the HOMFLYPT polynomial $P_{W_2(\hat\beta_r)}(v,z)$ of the doubled link $W_2(\hat\beta_r)$ for the closure $\hat\beta_r$ of a quasitoric braid $\beta_r=(\sigma_r^{-\epsilon}
\sigma_{r-1}^{\epsilon}\cdots \sigma_{1}^{(-1)^{r}\epsilon})^3$ with $\epsilon=\pm 1$ is equal to $6r-1$ (Theorem \ref{main-thm-1}). In Section 5, we give a family $\mathcal K^3=\bigcup_{r=1}^\infty\mathcal K_r$ of alternating knots, where $\mathcal K_1$ contains all $(2,n)$ torus knots, $2$-bridge knots and alternating pretzel knots and $\mathcal K_i \not=\mathcal K_j$ if $i\not= j$, and show that the minimal crossing number of any alternating knot in $\mathcal K^3$ coincides with the canonical genus of its Whitehead double (Theorem \ref{main-thm-2}). Consequently, we give a new infinite family of alternating knots for which Conjecture \ref{Nakam-conj-0} holds. The final section 6 is devoted to prove a key lemma \ref{main-lem-1}, which has an essential role to prove Theorem \ref{main-thm-1}. 


\section{Canonical genus and Whitehead double of a knot}\label{sect-wd}

Let $T$ be a knot embedded in the unknotted solid torus $V=S^1 \times D^2$, which is essential in the sense that it meets every meridional disc in $V$. Let $K$ be an arbitrary given knot in $S^3$
and let $N(K)$ be a tubular neighborhood of $K$ in $S^3$.
Suppose $h : V=S^1 \times D^2 \to N(K)$ is a homeomorphism.
Then the image $h(T)=S_T(K)$ is a new knot, which is called a
{\it satellite (knot)} with {\it companion} $K$ and {\it pattern} $T$.
Note that if $K$ is a non-trivial knot, then satellite $S_T(K)$ is
also a non-trivial knot \cite{BZ}.

Now let $W_+$, $W_-$ and $U$ denote the positive Whitehead-clasp,
negative Whitehead-clasp and the doubled link embedded in $V$
with orientations as shown in Figure~\ref{Whitehead-clasp}.
\begin{figure}[ht]
\begin{center}
\resizebox{0.70\textwidth}{!}{%
  \includegraphics{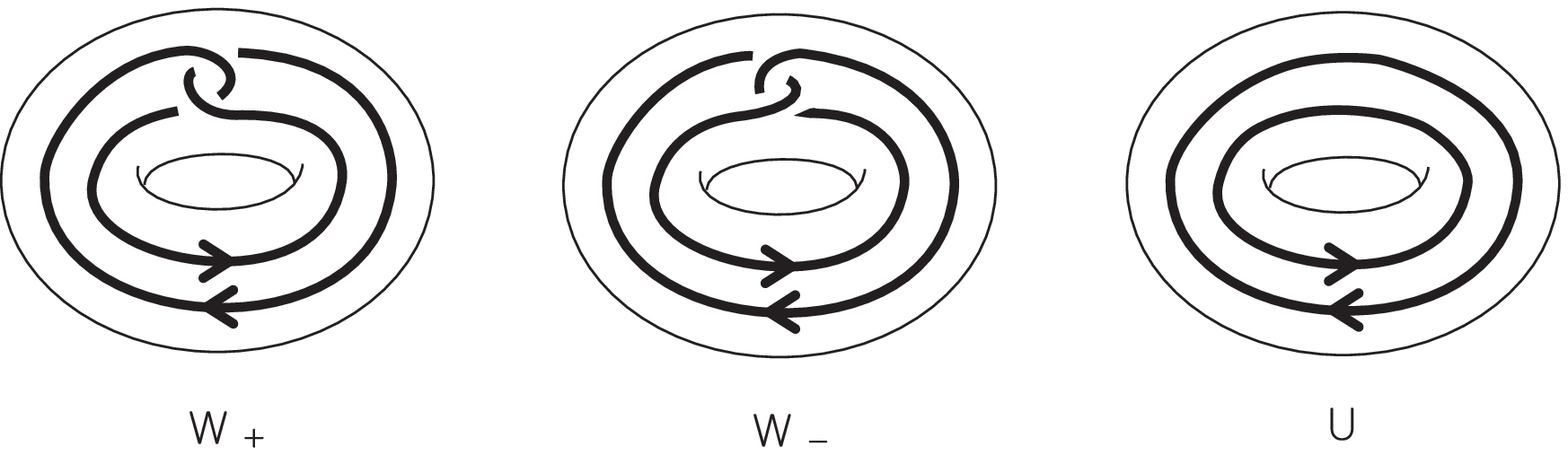}}
\caption{}\label{Whitehead-clasp}
\end{center}
\end{figure}
Let $K$ be an oriented knot and let $h : V=S^1 \times D^2 \to N(K)$ be an orientation preserving homeomorphism which take
the disk $\{\mathbf{1}\} \times D^2$ to a meridian disk of $N(K)$,
and the core $S^1 \times \{\mathbf{0}\}$ of $V$ onto the knot $K$.
Let $\ell$ be the preferred longitude of $V$. We choose an orientation for the image $h(\ell)$ so that it is parallel to $K$.
If the linking number of the image $h(\ell)$ and $K$ is equal to $m$,
then the satellite $S_{W_+}(K)$ (resp. $S_{W_-}(K)$)
with companion $K$ and pattern $W_+$ (resp. $W_-$) is called the {\it $m$-twisted positive} (resp. {\it negative}) {\it Whitehead
double} of $K$, denoted by $W_+(K, m)$(resp. $W_-(K, m)$), and the satellite $S_U(K)$ with companion $K$ and pattern $U$ is called
the {\it $m$-twisted doubled link} of $K$, denoted by $W_2(K,m)$.
The $0$-twisted positive (resp. negative) Whitehead double of $K$ is
sometimes called the {\it untwisted} positive (resp. negative)
Whitehead double of $K$.
In what follows, we use the notation $W_\pm(K, m)$ to refer the
$m$-twisted positive/negative Whitehead double of $K$ according as $+$/$-$.

Let $D$ be an oriented diagram of an oriented knot $K$ and let $w(D)$
denote the writhe of $D$, that is, the sum of the signs of all
crossings in $D$ defined by
$\mathrm{sign} \left( \xy (5,2.5);(0,-2.5) **@{-} ?<*\dir{<},
(5,-2.5);(3,-0.5) **@{-}, (0,2.5);(2,0.5) **@{-} ?<*\dir{<},
\endxy \right) = 1$ and $\mathrm{sign} \left( \xy (0,2.5);(5,-2.5)
**@{-} ?<*\dir{<}, (0,-2.5);(2,-0.5) **@{-}, (5,2.5);(3,0.5)
**@{-} ?<*\dir{<}, \endxy \right) = -1$.
Recall that for an oriented diagram $D = D_1 \cup D_2$ of an oriented
two component link $L = K_1 \cup K_2$, the {\it linking number}
$lk(L)$ of $L$ is defined to be the half of the sum of the signs of all
crossings between $D_1$ and $D_2$. The $m$-twisted positive
(resp. negative) Whitehead double $W_+(K,m)$ (resp. $W_-(K,m)$) has the
{\it canonical diagram}, denoted by  $W_+(D,m)$
(resp. $W_-(D,m)$), associated
with $D$, which is the doubled link diagram of $D$ with $(m-w(D))$
full-twists (see Figure~\ref{fig-full-twist}) and a positive Whitehead-clasp
$W_+$ (resp. negative Whitehead-clasp $W_-$) as illustrated in
(b) and (c) of Figure~\ref{fig-ex-canon-diag-trefoil}. Also, the $m$-twisted doubled link
$W_2(K,m)$ of $K$ has the canonical diagram $W_2(D,m)$ associated with $D$,
which is the doubled link diagram of $D$ with $(m-w(D))$ full-twists
without Whitehead clasp.

In particular, the canonical diagram $W_+(D,w(D))$ (resp. $W_-(D,w(D))$) of the $w(D)$-twisted positive (resp. negative) Whitehead double $W_+(K,w(D))$ (resp. $W_-(K,w(D))$) is called the
{\it standard diagram} of Whitehead double of $K$ associated with the diagram $D$ and is denoted by simply $W_+(D)$ (resp. $W_-(D)$). Likewise, the canonical diagram $W_2(D,w(D))$ of the $w(D)$-twisted doubled link $W_2(K,w(D))$ is called the
{\it standard diagram} of the doubled link of $K$ associated with the diagram $D$ and is denoted by simply $W_2(D)$ (For example, see Figure \ref{fig-ex-canon-diag-trefoil} (d)).

\begin{figure}[ht]
\centerline{ \xy (0,10);(15,10) **@{-},
(0,15);(15,15) **@{-} ,
(20,12.5);(30,12.5) **@{-} ?>*\dir{>},
(7.5,5)*{_{}},
\endxy
\quad
\xy (0,10);(1,10) **@{-}, 
(0,15);(1,15) **@{-}, (2.5,10.5);
(4,12) **@{-}, (5,13);(6.5,14.5) **@{-}, (2.5,14.5);(6.5,10.5) **@{-},
(8.5,10.5);(10,12) **@{-}, (11,13);(12.5,14.5) **@{-},
(8.5,14.5);(12.5,10.5) **@{-}, (1,10);(2.5,10.5) **\crv{(2,10)},
(1,15);(2.5,14.5) **\crv{(2,15)}, (6.5,14.5);(8.5,14.5)
**\crv{(7.5,15.5)}, (6.5,10.5);(8.5,10.5) **\crv{(7.5,9.5)},
(14,10);(12.5,10.5) **\crv{(13,10)}, (14,15);(12.5,14.5)
**\crv{(13,15)}, (14,10);(15,10) **@{-}, (14,15);(15,15) **@{-},
(7.5,6)*{_\text{$(+1)$-full twist}}, (23.5,12)*{_{\rm or}},
\endxy
\qquad
\xy (0,10);(1,10) **@{-}, 
(0,15);(1,15) **@{-}, (2.5,14.5);
(4,13) **@{-}, (5,12);(6.5,10.5) **@{-}, (2.5,10.5);(6.5,14.5) **@{-},
(8.5,14.5);(10,13) **@{-}, (11,12);(12.5,10.5) **@{-},
(8.5,10.5);(12.5,14.5) **@{-}, (1,10);(2.5,10.5) **\crv{(2,10)},
(1,15);(2.5,14.5) **\crv{(2,15)}, (6.5,14.5);(8.5,14.5)
**\crv{(7.5,15.5)}, (6.5,10.5);(8.5,10.5) **\crv{(7.5,9.5)},
(14,10);(12.5,10.5) **\crv{(13,10)}, (14,15);(12.5,14.5)
**\crv{(13,15)}, (14,10);(15,10) **@{-}, (14,15);(15,15) **@{-},
(7.5,6)*{_\text{$(-1)$-full twist}},
\endxy}
\vspace*{0pt}\caption{}\label{fig-full-twist} 
\end{figure}
\begin{figure}[ht]
\centerline{\xy (32,28)*{(a)~D}, (55,40)*{w(D)=3},
(23,44);(27,40)
**@{-}, ?<*\dir{<}, (23,40);(24.5,41.5) **@{-},
(25.5,42.5);(27,44) **@{-},
(30,44);(34,40) **@{-}, (30,40);(31.5,41.5) **@{-},
(32.5,42.5);(34,44) **@{-},
(37,44);(41,40) **@{-},  (37,40);(38.5,41.5) **@{-},
(39.5,42.5);(41,44) **@{-},
(27,44);(30,44) **\crv{(28.5,45.5)}, (27,40);(30,40)
**\crv{(28.5,38.5)}, (34,44);(37,44) **\crv{(35.5,45.5)},
(34,40);(37,40) **\crv{(35.5,38.5)}, (23,44);(41,44)
**\crv{(21,46)&(21,50)&(43,50)&(43,46)}, (23,40);(41,40)
**\crv{(21,38)&(21,34)&(43,34)&(43,38)},
\endxy}
\vskip 0.2cm
\centerline{\xy (33,22)*{(b)~{W_+(D)=W_+(D, 3)}}, 
 (22.5,42.5);(24,44) **@{-},
(25,40);(26.5,41.5) **@{-}, (21.5,42.5);(25,39) **@{-},
(24,45);(27.5,41.5) **@{-},
(27.5,42.5);(30.5,42.5) **\crv{(29,44)}, (27.5,41.5);(30.5,41.5)
**\crv{(29,40)}, (25,45);(33,45) **\crv{(29,49)}, (25,39);(33,39)
**\crv{(29,35)},
(31.5,42.5);(33,44) **@{-}, (34,40);(35.5,41.5) **@{-},
(30.5,42.5);(34,39) **@{-}, (33,45);(36.5,41.5) **@{-},
(36.5,42.5);(39.5,42.5) **\crv{(38,44)}, (36.5,41.5);(39.5,41.5)
**\crv{(38,40)}, (34,45);(42,45) **\crv{(38,49)}, (34,39);(42,39)
**\crv{(38,35)},
(40.5,42.5);(42,44) **@{-}, (43,40);(44.5,41.5) **@{-},
(39.5,42.5);(43,39) **@{-}, (42,45);(45.5,41.5) **@{-},
(31,56);(34,52.5) **\crv{(36.2,56.2)&(35.5,53)},
(32.5,52.1);(31,52) **\crv{(32,52)}, (36,52);(33,55.5)
**\crv{(31,51.8)&(31.5,54.8)}, (34.5,55.9);(36,56)
**\crv{(35,56)},
(31,56);(21.5,42.5) **\crv{(16,56.2)&(17,46)}, (31,52);(24,45)
**\crv{(20.5,52)&(21,47.5)}, (36,56);(45.5,42.5)
**\crv{(51,56.2)&(50,46)}, (36,52);(43,45)
**\crv{(46.5,52)&(46,47.5)},
(25,28);(21.5,41.5) **\crv{(16,28.5)&(17.5,38)}, (25,32);(24,39) **\crv{(20.5,32.5)&(21.5,36.5)},
(42,28);(45.5,41.5) **\crv{(51,28.5)&(49.5,38)}, (42,32);(43,39) **\crv{(46.5,32.5)&(45.5,36.5)},
(25,28);(36,28) **@{-}, (25,32);(36,32) **@{-}, (36,28);(42,28)
**@{-}, (36,32);(42,32) **@{-},
\endxy
  \quad
\xy (33,22)*{(c)~{W_+(D, 0)}}, 
(22.5,42.5);(24,44)
**@{-}, (25,40);(26.5,41.5) **@{-}, (21.5,42.5);(25,39) **@{-},
(24,45);(27.5,41.5) **@{-},
(27.5,42.5);(30.5,42.5) **\crv{(29,44)},
(27.5,41.5);(30.5,41.5)
**\crv{(29,40)}, (25,45);(33,45) **\crv{(29,49)}, (25,39);(33,39)
**\crv{(29,35)},
(31.5,42.5);(33,44) **@{-}, (34,40);(35.5,41.5) **@{-},
(30.5,42.5);(34,39) **@{-}, (33,45);(36.5,41.5) **@{-},
(36.5,42.5);(39.5,42.5) **\crv{(38,44)},
(36.5,41.5);(39.5,41.5)
**\crv{(38,40)}, (34,45);(42,45) **\crv{(38,49)}, (34,39);(42,39)
**\crv{(38,35)},
(40.5,42.5);(42,44) **@{-}, (43,40);(44.5,41.5) **@{-},
(39.5,42.5);(43,39) **@{-}, (42,45);(45.5,41.5) **@{-},
(31,56);(34,52.5) **\crv{(36.2,56.2)&(35.5,53)},
(32.5,52.1);(31,52) **\crv{(32,52)}, (36,52);(33,55.5)**\crv{(31,51.8)&(31.5,54.8)},
(34.5,55.9);(36,56) **\crv{(35,56)},
(31,56);(21.5,42.5) **\crv{(16,56.2)&(17,46)}, (31,52);(24,45)
**\crv{(20.5,52)&(21,47.5)}, (36,56);(45.5,42.5)
**\crv{(51,56.2)&(50,46)}, (36,52);(43,45)
**\crv{(46.5,52)&(46,47.5)},
(24,28);(21.5,41.5) **\crv{(16,28.5)&(17.5,38)}, (24,32);(24,39)
**\crv{(20.5,32.5)&(21.5,36.5)}, (42,28);(45.5,41.5)
**\crv{(51,28.5)&(49.5,38)}, (42,32);(43,39)
**\crv{(46.5,32.5)&(45.5,36.5)},
(24,28);(25.5,30) **\crv{(25,28)}, (25.5,30);(27,32)
**\crv{(26,32)}, (24,32);(25.2,30.5) **\crv{(24.7,32)},
(25.8,29.5);(27,28) **\crv{(26.3,28)}, (27,28);(28.5,30)
**\crv{(28,28)}, (28.5,30);(30,32) **\crv{(29,32)},
(27,32);(28.2,30.5) **\crv{(27.7,32)}, (28.8,29.5);(30,28)
**\crv{(29.3,28)},
(30,28);(31.5,30) **\crv{(31,28)}, (31.5,30);(33,32)
**\crv{(32,32)}, (30,32);(31.2,30.5) **\crv{(30.7,32)},
(31.8,29.5);(33,28) **\crv{(32.3,28)}, (33,28);(34.5,30)
**\crv{(34,28)}, (34.5,30);(36,32) **\crv{(35,32)},
(33,32);(34.2,30.5) **\crv{(33.7,32)}, (34.8,29.5);(36,28)
**\crv{(35.3,28)},
(36,28);(37.5,30) **\crv{(37,28)}, (37.5,30);(39,32)
**\crv{(38,32)}, (36,32);(37.2,30.5) **\crv{(36.7,32)},
(37.8,29.5);(39,28) **\crv{(38.3,28)}, (39,28);(40.5,30)
**\crv{(40,28)}, (40.5,30);(42,32) **\crv{(41,32)},
(39,32);(40.2,30.5) **\crv{(39.7,32)}, (40.8,29.5);(42,28)
**\crv{(41.3,28)},
\endxy
\quad
\xy (33,22)*{(d)~{W_2(D)=W_2(D,3)}}, 
 (22.5,42.5);(24,44) **@{-},
(25,40);(26.5,41.5) **@{-}, (21.5,42.5);(25,39) **@{-},
(24,45);(27.5,41.5) **@{-},
(27.5,42.5);(30.5,42.5) **\crv{(29,44)},
(27.5,41.5);(30.5,41.5)**\crv{(29,40)},
(25,45);(33,45) **\crv{(29,49)},
(25,39);(33,39)**\crv{(29,35)},
(31.5,42.5);(33,44) **@{-}, (34,40);(35.5,41.5) **@{-},
(30.5,42.5);(34,39) **@{-}, (33,45);(36.5,41.5) **@{-},
(36.5,42.5);(39.5,42.5) **\crv{(38,44)},
(36.5,41.5);(39.5,41.5)
**\crv{(38,40)}, (34,45);(42,45) **\crv{(38,49)}, (34,39);(42,39)
**\crv{(38,35)},
(40.5,42.5);(42,44) **@{-}, (43,40);(44.5,41.5) **@{-},
(39.5,42.5);(43,39) **@{-}, (42,45);(45.5,41.5) **@{-},
(31,56);(36,56) **@{-}, (31,52);(36,52) **@{-},
(31,56);(21.5,42.5) **\crv{(16,56.2)&(17,46)},
(31,52);(24,45)**\crv{(20.5,52)&(21,47.5)},
(36,56);(45.5,42.5)**\crv{(51,56.2)&(50,46)},
(36,52);(43,45)**\crv{(46.5,52)&(46,47.5)},
(25,28);(21.5,41.5) **\crv{(16,28.5)&(17.5,38)}, (25,32);(24,39) **\crv{(20.5,32.5)&(21.5,36.5)},
(42,28);(45.5,41.5) **\crv{(51,28.5)&(49.5,38)}, (42,32);(43,39) **\crv{(46.5,32.5)&(45.5,36.5)},
(25,28);(36,28) **@{-}, (25,32);(36,32) **@{-}, (36,28);(42,28)
**@{-}, (36,32);(42,32) **@{-},
\endxy
}
 \caption{}\label{fig-ex-canon-diag-trefoil} 
\end{figure}

Frankel and Pontrjagin\cite{FP} and Seifert\cite{Sei} introduced a
method to construct a compact orientable surface having a given link as its boundary.
A {\it Seifert surface} for a link $L$ in $S^3$ is a compact,
connected, and orientable surface $\Sigma$ in $S^3$ such that the
boundary $\partial\Sigma$ of $\Sigma$ is ambient isotopic to $L,$ that is, $\partial\Sigma=L.$ The {\it genus} of an oriented link $L$, denoted by $g(L)$, is the minimum genus of any Seifert surface of $L$. The genus of an unoriented link $L$ is the minumum taken over all possible choices of orientation for $L$. For a diagram $D$ of a link $L$, it is well known
that a Seifert surface for $L$ can always be obtained from $D$ by applying Seifert's algorithm\cite{Sei}. A Seifert surface for a link
$L$ constructed via Seifert's algorithm for a diagram $D$ is called
the {\it canonical Seifert surface} associated with $D$ and denoted by $\Sigma(D)$. In what follows, we denote the genus $g(\Sigma(D))$ of the canonical Seifert surface $\Sigma(D)$ by $g_c(D)$.
Then the minimum genus over all canonical Seifert surfaces for $L$
is called the {\it canonical genus} of $L$ and denoted by $g_c(L)$, i.e., $$g_c(L)=\displaystyle{\underset{\text{$D$ a diagram of $L$}}{\rm min}~g_c(D)}.$$

Seifert\cite{Sei} showed that
\begin{equation}\label{deg-AP-gen-ineq}
\frac{1}{2}{\rm deg}\Delta_K(t) \leq g(K),
\end{equation}
 where ${\rm deg}\Delta_K(t)$ is the degree of the Alexander polynomial
$\Delta_K(t)$ of $K.$ If $K$ is a torus knot, then the equality in
(\ref{deg-AP-gen-ineq}) holds, but there are also cases where the
equality does not hold. In fact, the trivial knot is the only knot
with genus zero and there are many non trivial knots whose Alexander
polynomials are equal to $1.$  Note that Seifert's algorithm
applied to a knot or link diagram might not produce a minimal genus Seifert surface and so the following inequality holds:
\begin{equation}\label{eq-inequality-genera}
g(K) \leq g_c(K).
\end{equation} Up to now, many authors have gone into finding
knots and links for which this inequality is strict or equal, for example,
see \cite{KK,LPS,LeS3,Liv,Mor,Nak,Tri} and there in. On the other hand,
Murasugi\cite{Mur} proved that if $K$ is an alternating knot, then
the equality in (\ref{deg-AP-gen-ineq}) holds and $g(K)=g_c(K)$ in
(\ref{eq-inequality-genera}). Also we have the following:

\begin{prop}\cite{BJ,Nak,Tri}\label{prop1-cr-nbr-cg-wd}
Let $K$ be a non-trivial knot and let $D$ be an oriented diagram of $K$ with $c(D)=c(K)$, where $c(K)$ denotes the minimal crossing number of $K$. Then for any integer $m$,
\begin{itemize}
\item [(1)] $g_c(W_\pm(D,m)) = g_c(W_\pm(D,w(D)))$.
\item [(2)] $g_c(W_\pm(K,m))\leq g_c(W_\pm(D,m))=c(K).$
\end{itemize}
\end{prop}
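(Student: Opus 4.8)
The plan is to prove Proposition \ref{prop1-cr-nbr-cg-wd} by directly analyzing the canonical Seifert surface obtained from the canonical diagrams of the Whitehead doubles. For part (1), the key observation is that the canonical diagrams $W_\pm(D,m)$ and $W_\pm(D,w(D))$ differ only in the number of full-twists inserted into the doubled-band region: $W_\pm(D,m)$ has $(m-w(D))$ full-twists while the standard diagram $W_\pm(D,w(D))=W_\pm(D)$ has $0$ full-twists. I would first apply Seifert's algorithm to $W_\pm(D,m)$ and observe that a single full-twist on the two parallel strands contributes two crossings of the \emph{same} sign, both of which lie between the two Seifert circles running parallel along $K$. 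Inserting or deleting one full-twist therefore changes the crossing count by two and the Seifert-circle count by zero (the two parallel circles persist), so by the genus formula $g_c(D')=\tfrac{1}{2}\bigl(c(D')-s(D')+2-\mu(D')\bigr)$, where $s$ is the number of Seifert circles and $\mu$ the number of components, each full-twist would appear to change the genus. The correct bookkeeping, however, is that each full-twist introduces two new Seifert circles (small circles encircling the twist region) together with its two crossings, so the net change in $c-s$ is zero and $g_c$ is invariant under adding or removing full-twists. I would make this precise by drawing the local Seifert picture of one full-twist and counting, concluding $g_c(W_\pm(D,m))=g_c(W_\pm(D,w(D)))$.

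For part (2), the inequality $g_c(W_\pm(K,m))\le g_c(W_\pm(D,m))$ is immediate from the definition of canonical genus as the minimum of $g_c$ over all diagrams: $W_\pm(D,m)$ is one particular diagram of the knot $W_\pm(K,m)$, so its canonical-surface genus is an upper bound for the minimum. The substantive content is the equality $g_c(W_\pm(D,m))=c(K)$. By part (1) it suffices to compute $g_c(W_\pm(D))=g_c(W_\pm(D,w(D)))$ for the standard diagram, and here I would use the hypothesis that $D$ is a minimal diagram with $c(D)=c(K)$. The strategy is to apply Seifert's algorithm to the standard diagram $W_\pm(D)$ and count crossings, Seifert circles, and components explicitly. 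The standard diagram doubles every strand of $D$, so each of the $c(D)$ crossings of $D$ becomes a $4$-crossing tangle in the doubled diagram, and the choice $m=w(D)$ ensures that the writhe-canceling full-twists vanish, leaving only the doubled crossings plus the two crossings of the Whitehead clasp.

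The computation then reduces to tracking how Seifert's algorithm resolves the doubled diagram. Since the two parallel copies of $K$ carry opposite orientations (the doubling with a clasp reverses orientation on the return strand), I expect the Seifert circles of $W_\pm(D)$ to be in bijection with the edges (or with the Seifert circles) of the original diagram $D$ in a controlled way, and the crossing count to be $4c(D)+2$ from the doubled crossings and the clasp. Carrying out the Euler-characteristic count $g_c=\tfrac{1}{2}(c-s+2-\mu)$ with $\mu=1$ for the knot $W_\pm(K,m)$ should collapse to exactly $c(D)=c(K)$. The main obstacle I anticipate is the careful Seifert-circle bookkeeping for the doubled diagram: one must verify that the orientations on the two parallel strands interact with each crossing of $D$ so that the number of Seifert circles created is precisely what is needed for the genus to reduce to $c(K)$, independent of the particular structure of $D$. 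This orientation-and-counting argument is where the proof's real work lies, and it is essentially the content already established in the cited references \cite{Tri,Nak,BJ}; I would present it as a self-contained local-to-global crossing and Seifert-circle count on the standard doubled diagram.
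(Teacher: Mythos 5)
The first thing to note is that the paper contains no proof of Proposition~\ref{prop1-cr-nbr-cg-wd} at all: it is stated with citations to \cite{BJ,Nak,Tri} and simply imported. So your proposal can only be measured against the arguments in those references, and in spirit it follows the same route they take (a direct Euler-characteristic computation for the canonical Seifert surface of the canonical doubled diagrams). The easy parts of your write-up are fine: $g_c(W_\pm(K,m))\le g_c(W_\pm(D,m))$ because the canonical genus is a minimum over all diagrams, the crossing count $c(W_\pm(D))=4c(D)+2$ is correct, and the formula $g_c=\tfrac12(c-s+2-\mu)$ with $\mu=1$ is the right bookkeeping device.

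However, there are two genuine gaps. (a) In part (1), your claim that ``each full-twist introduces two new Seifert circles \dots so the net change in $c-s$ is zero'' is not a local fact, so your plan to verify it by drawing the local picture of one full twist cannot close the argument. The two strands being twisted are \emph{antiparallel}, so the Seifert smoothing of one full twist produces one small circle between its two crossings together with two turnback arcs which cut the ambient circles and reconnect them; the net change in the number of Seifert circles is $+2$ when the two strands at the insertion site lie on the \emph{same} Seifert circle of the whole diagram, but $0$ when they lie on \emph{different} circles---and in the latter case the genus would increase by one per twist and part (1) would fail. You therefore need the global statement that in $W_\pm(D,m)$ the two parallel strands at the twist block lie on a common Seifert circle; this is true, but it requires tracing circles through the clasp and through the turnbacks created inside every doubled crossing tangle (note the twist block in the canonical diagram is not adjacent to the clasp), and it is nowhere addressed. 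Your opening picture of ``two Seifert circles running parallel along $K$'' is in fact impossible for antiparallel strands, which is a symptom of the same issue. (b) In part (2), the actual substance of the proposition---that Seifert's algorithm applied to $W_\pm(D)$ produces exactly $2c(D)+3$ circles, equivalently that the canonical surface has genus exactly $c(D)$---is never carried out: you say the count ``should collapse to exactly $c(D)$'' and that it ``is essentially the content already established in the cited references.'' That orientation-and-counting argument is precisely what Tripp and Nakamura prove, so deferring it leaves your proposal as a plan plus a citation rather than a proof.
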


\section{Maximum $z$-degree of HOMFLYPT polynomials}\label{sec-micg}

The {\it HOMFLYPT polynomial} $P_{L}(v,z)$ (or $P(L)$ for short) of an
oriented link $L$ in $S^3$ is defined by the following three axioms:
\begin{enumerate}
\item[(1)] $P_{L}(v,z)$ is invariant under ambient isotopy of $L$.

\item[(2)] If $O$ is the trivial knot, then $P_O(v,z)=1.$

\item[(3)] If $L_{+}$, $L_{-}$ and $L_{0}$ have diagrams $D_{+}$,
$D_{-}$ and $D_{0}$ which differ as shown in Figure
\ref{fig-skein-diag-1}, then $v^{-1} P_{L_{+}}(v,z) -
vP_{L_{-}}(v,z) = zP_{L_{0}}(v,z).$
\end{enumerate}

\begin{figure}[ht]
\vspace*{5pt} \centerline{\xy (-1,14);(11,6) **@{-} ?>*\dir{>},
(11,14);(6,10.7)  **@{-} ?<*\dir{<}, (4,9.3);(-1,6) **@{-}, 
(5,-2)*{_{D_+}},
\endxy
 \qquad\qquad
\xy (-1,6);(11,14) **@{-} ?>*\dir{>}, (11,6);(6,9.3)  **@{-}
?<*\dir{<}, (4,10.7);(-1,14) **@{-}, 
(5,-2)*{_{D_-}},
\endxy
 \qquad\qquad
\xy (-1,6);(11,6) **\crv{(5,10)} ?>*\dir{>}, (-1,14);(11,14)
**\crv{(5,10)} ?>*\dir{>}, 
(5,-2)*{_{D_0}}, \endxy
 }
\vspace*{5pt}\caption{}\label{fig-skein-diag-1}
\end{figure}

Let $L$ be an oriented link and let $D$ be its oriented diagram. Then $P_{L}(v,z)$ can be computed recursively by using a skein tree,
switching and smoothing crossings of $D$ until the terminal nodes are labeled with trivial links. Observe that
\begin{eqnarray}
P_{L_{+}}(v,z) &=& v^2P_{L_{-}}(v,z) + vzP_{L_{0}}(v,z),
\label{eq-skein-pos-1} \\
P_{L_{-}}(v,z) &=& v^{-2} P_{L_{+}}(v,z) - v^{-1}zP_{L_{0}}(v,z).
\label{eq-skein-pos-2}
\end{eqnarray}
Set $\delta = (v^{-1}-v)z^{-1}$. If $L_1 \sqcup L_2$ denotes the
disjoint union of oriented links $L_1$ and $L_2$, then $P_{L_1
\sqcup L_2}(v,z) = \delta P_{L_1}(v,z) P_{L_2}(v,z)$ \cite{Cro2, Kaw}.

For the HOMFLYPT polynomial $P_L(v,z)$ of a link $L$, we denote the maximum degree in $z$ of $P_L(v,z)$ by $\max\deg_z P_L(v,z)$ or $M(L)$ for short. Let $L_+, L_-$ and $L_0$ denote the links with the diagrams $D_+, D_-$ and $D_0$, respectively, as shown in Figure \ref{fig-skein-diag-1}. Note that the degree of the sum of two polynomials cannot exceed the larger of their two degrees and is equal to the maximum of them if the two degrees are distinct. Hence it follows from (\ref{eq-skein-pos-1}) and (\ref{eq-skein-pos-2}) that
\begin{align*}
M(L_+) &\leq {\rm max}\{M(L_-), M(L_0)+1\}\\
M(L_-) &\leq {\rm max}\{M(L_+), M(L_0)+1\},\\
M(L_0) &\leq {\rm max}\{M(L_+), M(L_-)\}-1.
\end{align*}
Here, the equality holds if the two terms in the right-hand side of the inequality are distinct.

\begin{prop}\label{prop3-cr-nbr-cg-wd}
Let $K$ be an oriented knot and let $D$ be an oriented diagram of $K$.
\begin{itemize}
\item [(1)] For any integer $m$ and $\epsilon=+$ or $-$, $$M(W_2(D,m)) \leq {\rm max}\{M(W_\epsilon(D,m)), 0\}-1.$$
In particular, if $M(W_\epsilon(K,m)) > 0$, then the equality holds, i.e.,
\begin{equation}\label{eq1-prop3-cr-nbr-cg-wd}
M(W_2(D,m))=M(W_\epsilon(D,m))-1.
\end{equation}
\item [(2)] For any integer $m$, $M(W_2(D,w(D))) \leq {\rm max}\{M(W_2(D,m)), 1\}.$

In particular, if $M(W_2(D,w(D)))\not= 1$, then the equality holds, i.e.,
\begin{equation}\label{eq2-prop3-cr-nbr-cg-wd}
M(W_2(D,w(D)))=M(W_2(D,m)).
\end{equation}
\end{itemize}
\end{prop}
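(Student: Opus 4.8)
The plan is to derive both statements by resolving a single, well-chosen crossing and substituting the resulting skein triple into the three inequalities for $M$ recorded above. For (1), I would resolve one of the two crossings of the Whitehead clasp in the canonical diagram $W_\epsilon(D,m)$. Its oriented smoothing erases the clasp while respecting orientations, hence converts the one-component double into the doubled link and reproduces $W_2(D,m)$ exactly; switching the same crossing instead makes the two clasp crossings cancel by a Reidemeister~II move, frees the resulting turnback, and lets the $(m-w(D))$ full twists unwind by Reidemeister~I moves, so the diagram collapses to the unknot $O$ (this is the standard fact that a Whitehead double has unknotting number one). For a positive clasp this gives the triple $(L_+,L_-,L_0)=(W_+(D,m),O,W_2(D,m))$ and for a negative clasp the triple $(O,W_-(D,m),W_2(D,m))$. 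Feeding this into $M(L_0)\le\max\{M(L_+),M(L_-)\}-1$ and using $M(O)=0$ yields $M(W_2(D,m))\le\max\{M(W_\epsilon(D,m)),0\}-1$. Finally, because $M$ is an ambient-isotopy invariant one has $M(W_\epsilon(D,m))=M(W_\epsilon(K,m))$, so the hypothesis $M(W_\epsilon(K,m))>0=M(O)$ makes the two entries of the maximum distinct, and the ``equality when the two terms differ'' clause of the skein inequalities upgrades the bound to the stated equality.

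For (2), note that $W_2(D,w(D))$ carries $0$ full twists while $W_2(D,m)$ carries $m-w(D)$ of them, so I would induct on $|m-w(D)|$, stripping off one full twist at a time. Writing $G_k$ for the doubled diagram with $k$ full twists in the doubling region, I would resolve one crossing of the outermost full twist: switching it cancels that twist and yields $G_{k-1}$, while the oriented smoothing reconnects the two strands across the twist region and produces a one-component diagram $L_0$. Running the triple $(L_+,L_-)=(G_k,G_{k-1})$ (up to the sign of the crossing) with this $L_0$ through $M(G_k)\le\max\{M(G_{k-1}),M(L_0)+1\}$ and $M(G_{k-1})\le\max\{M(G_k),M(L_0)+1\}$, and iterating down to $k=0$, is designed to deliver $M(W_2(D,w(D)))\le\max\{M(W_2(D,m)),1\}$. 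Here the constant $1$ appears because $L_0$ is a knot, so $M(L_0)\ge0$ and $M(L_0)+1\ge1$ at every step.

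The delicate point, and the step I expect to cost the most, is turning these inequalities into the equality $M(W_2(D,w(D)))=M(W_2(D,m))$ under the hypothesis $M(W_2(D,w(D)))\ne1$. Both the clean inequality and the equality hinge on a \emph{domination} estimate: at each twist-changing step the smoothed one-component term should satisfy $M(L_0)+1<\max\{M(G_k),M(G_{k-1})\}$, so that the maximum is always realized by the two doubled-link terms and the ``equality when the two surviving terms differ'' rule forces $M(G_k)=M(G_{k-1})$. Establishing $M(L_0)<M(G_k)$ is exactly where the non-degeneracy enters: since $L_0$ is a knot one always has $M(L_0)+1\ge1$, so the degree can fail to be twist-invariant only when the common doubled-link degree equals $1$, which is the value excluded in the statement. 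I would therefore isolate this domination as a separate claim and prove it by a secondary induction on the number of twists, controlling $M(L_0)$ either directly from the skein tree or via Morton's inequality applied to the canonical Seifert surface of the reconnected diagram.
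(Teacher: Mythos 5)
Your part (1) is correct and coincides with the paper's proof: resolving one clasp crossing of $W_\epsilon(D,m)$ produces the triple $(W_+(D,m),\,O,\,W_2(D,m))$ (resp.\ $(O,\,W_-(D,m),\,W_2(D,m))$ for the negative clasp), and the skein bound $M(L_0)\le\max\{M(L_+),M(L_-)\}-1$ together with the ``equality when the two terms differ'' clause gives both the inequality and the equality (3.4).

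Part (2) has a genuine gap, located exactly where you place the ``delicate point.'' The paper's induction works because the oriented smoothing $L_0$ at a full-twist crossing is not merely ``a one-component diagram'': it is the \emph{unknot}. Indeed, the two parallel copies of $D$ are oppositely oriented, so they cobound an annulus (carrying the full twists), and smoothing one crossing between them joins them into a single curve bounding the disk obtained by cutting this annulus along an arc at the smoothing site; the remaining twists are irrelevant. Hence $P_{L_0}=1$, so each step of the induction reads $P_{G_{k-1}}=v^{2}P_{G_k}+vz$ (or its negative-twist analogue $P_{G_{k-1}}=v^{-2}P_{G_k}-v^{-1}z$, in your notation $G_k$ for the $k$-twist diagram), giving $M(G_{k-1})\le\max\{M(G_k),1\}$ \emph{with equality whenever} $M(G_k)\ne 1$, and symmetrically; this is precisely the paper's argument. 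Your outline never identifies $L_0$: you justify the constant $1$ by the bound $M(L_0)\ge 0$, which is a \emph{lower} bound and therefore cannot produce the desired upper bound --- iterating $M(G_{k-1})\le\max\{M(G_k),M(L_0)+1\}$ only yields $M(W_2(D,w(D)))\le\max\bigl\{M(W_2(D,m)),\,\max_k\bigl(M(L_0^{(k)})+1\bigr)\bigr\}$, which is vacuous unless one shows $M(L_0^{(k)})\le 0$ at every step. The same missing fact undermines your ``domination estimate'' and its proposed proof: Morton's inequality applied to the reconnected (smoothed) diagram, which still has on the order of $4c(D)$ crossings and few Seifert circles, gives a bound far above $0$, and ``directly from the skein tree'' is not an argument; no secondary induction is needed once one observes $L_0=O$. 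In short, your scheme for (2) is structurally the paper's induction, but without the one geometric observation ($L_0$ is the unknot, so $M(L_0)=0$ exactly) that makes both the inequality and the equality go through.
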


\begin{proof} (1) Switching one of the two crossings in the clasp of $W_+(D,m)$, we get
\begin{align*}
&v^{-1}P_{~{\xy
(31,56);(34,52.5) **\crv{(36.2,56.2)&(35.5,53)},
(32.5,52.1);(30,52) **@{-} ?>*\dir{>},
(36,52);(33,55.5)**\crv{(31,51.8)&(31.5,54.8)},
(34.5,55.9);(37,56) **@{-} ?>*\dir{>},
\endxy}~}(v,z) -
vP_{~{\xy
(32.5,56);(31,56.1) **\crv{(32,56)},
(33,55.5);(36,56) **\crv{(34,56)},
(34,55);(34,52.5) **\crv{(35,55)&(35.5,53)},
(32.5,52.1);(30,52) **@{-} ?>*\dir{>},
(36,52);(33,55.5)**\crv{(31,51.8)&(31.5,54.8)},
(34.5,55.9);(36,56) **@{-} ?>*\dir{>},
\endxy}~}(v,z) =
zP_{~{\xy
(36,56.1);(31,56) **@{-} ?<*\dir{<},
(33,55);(34,52.5) **\crv{(34,55.5)&(35.5,53)},
(32.5,52.1);(30,52) **@{-} ?>*\dir{>},
(36,52);(33,55)**\crv{(31,51.8)&(31.5,54.8)},
\endxy}~}(v,z),\\
&v^{-1}P_{W_+(D,m)}(v,z)-vP_{\xy (0,0)
*\xycircle(2,2){-}, (0.5,1.98);(0.6,2) **@{-} ?>*\dir{>} \endxy}(v,z)=zP_{W_2(D,m)}(v,z),\\
&P_{W_2(D,m)}(v,z)=v^{-1}z^{-1}P_{W_+(D,m)}(v,z)-vz^{-1}.\end{align*}
This gives the inequality $M(W_2(D,m)) \leq {\rm max}\{M(W_+(D,m)), 0\}-1$. Similarly, we obtain the inequality $M(W_2(D,m)) \leq {\rm max}\{M(W_-(D,m)), 0\}-1$. It is obvious that the equality holds if $M(W_\pm(D,m)) > 0$.


(2) Let $K$ be a non-trivial oriented knot and let $D$ be an oriented diagram of $K$. Let $W_2(D,m)$ be the canonical diagram of the $m$-twisted doubled link $W_2(K,m)$ associated with $D$. We remind that $W_2(D,m)$ is the $2$-parallel link diagram of $D$ with $m-w(D)$ full-twists. Let $n=m-w(D)$. The proof is proceeded by induction on $|n|$.

If $n=0$, then the assertion is obvious. Assume that $|n| \geq 1$ and the assertion holds for all $k < |n|$. Switching one of the $2n$ crossings among the $n$ full-twists in $W_2(D,m)$ yields $W_2(D,w(D)+\frac{n}{|n|}(|n|-1))$ (after isotopy), while smoothing the crossing yields the unknot ${\xy (0,0) *\xycircle(2,2){-}, (0.5,1.98);(0.6,2) **@{-} ?>*\dir{>} \endxy}$, and so
\begin{align*}
&P_{W_2(D,w(D)+n-1)}(v,z)=v^2P_{W_2(D,m)}(v,z)+vzP_{\xy (0,0)
*\xycircle(2,2){-}, (0.5,1.98);(0.6,2) **@{-} ?>*\dir{>} \endxy}(v,z),~\text{if}~ n \geq 0,\\
&P_{W_2(D,w(D)+n+1)}(v,z)=v^{-2}P_{W_2(D,m)}(v,z)-v^{-1}zP_{\xy (0,0)
*\xycircle(2,2){-}, (0.5,1.98);(0.6,2) **@{-} ?>*\dir{>} \endxy}(v,z),~\text{if}~ n < 0.
\end{align*}
Since $P_{\xy (0,0)
*\xycircle(2,2){-}, (0.5,1.98);(0.6,2) **@{-} ?>*\dir{>} \endxy}(v,z)=1$, if follows that
\begin{equation}\label{eqn1-deg-max}
M(W_2(D,w(D)+\frac{n}{|n|}(|n|-1))) \leq {\rm max}\{M(W_2(D,m)), 1\},
\end{equation} where the equality holds when $M(W_2(D,m)) \not= 1.$ By induction hypothesis, it follows that \begin{equation}\label{eqn2-deg-max}M(W_2(D,w(D))) \leq {\rm max}\{M(W_2(D,w(D)+\frac{n}{|n|}(|n|-1))), 1\},\end{equation}
where the equality holds when $M(W_2(D,w(D)+\frac{n}{|n|}(|n|-1))) \not= 1.$ Combining (\ref{eqn1-deg-max}) and (\ref{eqn2-deg-max}), we obtain the assertion and complete the proof.

\end{proof}

Let $D$ be an oriented link diagram. The {\it Seifert circles} of $D$ are simple closed curves obtained from $D$ by smoothing each crossing as illustrated in Figure \ref{fig-smoothing}. We denote by $s(D)$ the number of the Seifert circles of $D$.

\begin{figure}[ht]
\vspace*{5pt} \centerline{\xy (-1,14);(11,6) **@{-} ?>*\dir{>},
(11,14);(6,10.7)  **@{-} ?<*\dir{<}, (4,9.3);(-1,6) **@{-},
\endxy
 \quad
\xy (0,0)*{,},
\endxy
 \quad
\xy (-1,6);(11,14) **@{-} ?>*\dir{>}, (11,6);(6,9.3)  **@{-}
?<*\dir{<}, (4,10.7);(-1,14) **@{-},
\endxy
 \qquad
\xy (0,10);(20,10) **@{-} ?>*\dir{>}, (0,6)*{},
(10,12)*{_\mathrm{smoothing}},
\endxy
 \qquad
\xy (-1,6);(11,6) **\crv{(5,10)} ?>*\dir{>}, (-1,14);(11,14)
**\crv{(5,10)} ?>*\dir{>},
\endxy
 }
\vspace*{5pt}\caption{}\label{fig-smoothing}
\end{figure}

\begin{thm}\cite[Theorem 2]{Mot}\label{thm-Morton-ineq}
For any oriented diagram $D$ of an oriented knot or link $L$,
\begin{equation}\label{Morton-ineq-c-gen}
\max\deg_z P_L(v,z)
\leq c(D) - s(D) + 1,
\end{equation}
where $c(D)$ is the number of crossings of the diagram $D$ and $s(D)$ is the number of the Seifert circles of $D$.
\end{thm}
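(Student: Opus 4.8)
The plan is to prove the inequality $M(L) = \max\deg_z P_L(v,z) \le c(D) - s(D) + 1$ by induction using the skein relation, driven by the three inequalities for $M$ recorded just before Proposition \ref{prop3-cr-nbr-cg-wd}. The starting point is to track how the quantity $c(D) - s(D)$ behaves under the two skein moves. Switching a crossing changes neither the crossing number nor the Seifert state (the oriented smoothing depends only on orientations, not on over/under information), so both $c(D)$ and $s(D)$, hence the right-hand side, are unchanged. Oriented smoothing of a single crossing produces $D_0$ with $c(D_0) = c(D) - 1$ and, crucially, $s(D_0) = s(D)$, since the smoothed crossing is precisely the one Seifert's algorithm would have smoothed anyway. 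Thus the $L_0$ term is always controlled: assuming the bound for the simpler diagram $D_0$, one gets $M(L_0) + 1 \le (c(D) - 1 - s(D) + 1) + 1 = c(D) - s(D) + 1$.

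The difficulty is the switched diagram, which has the same crossing number, so a naive induction on $c(D)$ stalls. I would resolve this with a secondary induction. Fix an ordering of the components of $L$ and a base point on each; traversing the components in order labels every crossing as \emph{good} or \emph{bad} according to whether it is first met along the over-strand or the under-strand, and let $b(D)$ denote the number of bad crossings. Switching a bad crossing leaves the shadow (so $c$ and $s$) unchanged while decreasing $b$ by one. I would then run a lexicographic double induction on $(c(D), b(D))$: given a bad crossing, the skein inequality $M(L_+) \le \max\{M(L_-), M(L_0)+1\}$ (or its $L_-$ analogue, according to the sign of the crossing) bounds $M(L)$ by the maximum of the value for the switched diagram $D'$ (smaller $b$, same $c$) and $M(L_0)+1$ (smaller $c$), both of which are $\le c(D) - s(D) + 1$ by the respective inductive hypotheses.

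The base case of the secondary induction is $b(D) = 0$, i.e.\ $D$ descending; such a diagram represents the $\mu$-component unlink, where $\mu$ is the number of components of $L$, so $P_L = \delta^{\mu-1}$ with $\delta = (v^{-1}-v)z^{-1}$, giving $M(L) = 1 - \mu$. It then remains to verify $1 - \mu \le c(D) - s(D) + 1$, equivalently $s(D) \le c(D) + \mu$. This is where the genuine combinatorial content lies: forming the Seifert graph with the $s(D)$ Seifert circles as vertices and the $c(D)$ crossings as edges, its number of connected components equals the number $k$ of connected components of the projection, whence $c(D) \ge s(D) - k$; since each connected piece of the projection carries at least one component of $L$ we have $\mu \ge k$, and combining these gives $s(D) \le c(D) + k \le c(D) + \mu$ for \emph{any} diagram. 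I expect this Seifert-graph inequality, together with the need to introduce the secondary induction on bad crossings in order to absorb the crossing switch that the primary induction on $c(D)$ cannot see, to be the main points requiring care; the remainder is a routine assembly of the skein inequalities.
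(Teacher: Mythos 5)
This statement is not proved in the paper at all: it is quoted verbatim from Morton's paper \cite[Theorem 2]{Mot}, so there is no internal argument to compare yours against. Judged on its own, your proof is correct and complete, and it is in essence the classical argument for Morton's inequality (the one in Morton's original paper and in standard textbook treatments): a lexicographic double induction on $(c(D),b(D))$, where $b(D)$ counts bad crossings relative to base points, using that a crossing switch preserves $c$ and $s$ while lowering $b$, that the oriented smoothing lowers $c$ by one while preserving $s$, and that the skein relation gives $M(L_\pm)\leq\max\{M(L_\mp),M(L_0)+1\}$. You correctly identify the two points of substance: the base case, where a descending diagram is the $\mu$-component unlink with $M=1-\mu$, and the purely combinatorial estimate $s(D)\leq c(D)+\mu$, which you derive from the Seifert graph having $c(D)$ edges, $s(D)$ vertices, and as many components as the underlying projection, each of which carries at least one link component. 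All of these steps are sound, so your proposal would serve as a valid self-contained proof of the cited theorem.
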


We note that the equality in (\ref{Morton-ineq-c-gen}) holds for alternating links, positive links, and many other links.

Let $D$ be an oriented diagram of an oriented knot or link $L$, let $\mu$ denote the number of components of $L$. Then the Euler characteristic $\chi(\Sigma(D))$ of the canonical Seifert surface $\Sigma(D)$ associated with $D$ is given by
\begin{equation*}
\chi(\Sigma(D))=s(D)-c(D)=2-2g(\Sigma(D))-\mu.
\end{equation*}
Then it follows from (\ref{Morton-ineq-c-gen}) that for every canonical Seifert surface $\Sigma(D)$ for $L$, we have
\begin{align*}
\max\deg_z P_L(v,z) &\leq c(D) - s(D) + 1=1-\chi(\Sigma(D))=2g(\Sigma(D))+\mu-1.
\end{align*}
Therefore, for a knot $K$, we obtain
\begin{equation}\label{Morton-ineq-c-gen-1}
\frac{1}{2} \max\deg_z P_K(v,z) \leq g_c(K).
\end{equation}

\begin{prop}\label{prop2-cr-nbr-cg-wd}
Let $K$ be a knot in $S^3$ with minimal crossing number $c(K)$ and let $W_\pm(K,m)$ be the $m$-twisted positive/negative Whitehead double of $K$. If $D$ is an oriented diagram of $K$ with $c(D)=c(K)$, then
\begin{align}
\frac{1}{2} \max\deg_z P_{W_\pm(K,m)}(v,z) &\leq g_c(W_\pm(K,m))\notag\\ &\leq g_c(W_\pm(D,m))=c(K).\label{Morton-ineq-c-gen-2}
\end{align}
\end{prop}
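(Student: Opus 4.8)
The plan is to read off the three relations in \eqref{Morton-ineq-c-gen-2} as an assembly of results already established in the excerpt, since each piece has in effect been prepared in Sections~\ref{sect-wd} and~\ref{sec-micg}. The chain splits naturally at $g_c(W_\pm(K,m))$: the leftmost inequality is the generic Morton--genus bound applied to a single knot, while the two relations to the right of $g_c(W_\pm(K,m))$ are exactly the content of Proposition~\ref{prop1-cr-nbr-cg-wd}(2).

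First I would handle the leftmost inequality. The crucial observation is that $W_\pm(K,m)$, being the satellite of $K$ with the one-component clasp pattern $W_\pm$, is itself an honest knot in $S^3$ (in contrast to the two-component doubled link $W_2(K,m)$). Hence the inequality \eqref{Morton-ineq-c-gen-1}, namely $\tfrac12\max\deg_z P_J(v,z)\le g_c(J)$, which was derived for an arbitrary knot $J$ from Morton's inequality (Theorem~\ref{thm-Morton-ineq}) together with the Euler-characteristic identity $\chi(\Sigma(D))=s(D)-c(D)=2-2g(\Sigma(D))-\mu$, applies verbatim with $J=W_\pm(K,m)$. This yields
\[
\tfrac12\max\deg_z P_{W_\pm(K,m)}(v,z)\le g_c(W_\pm(K,m))
\]
with no extra work.

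Next I would invoke Proposition~\ref{prop1-cr-nbr-cg-wd}(2) directly. Under the standing hypothesis that $D$ is a minimal diagram, $c(D)=c(K)$, that proposition gives precisely $g_c(W_\pm(K,m))\le g_c(W_\pm(D,m))=c(K)$, supplying the remaining two relations and, upon concatenation with the previous display, the full statement \eqref{Morton-ineq-c-gen-2}.

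Since both ingredients are already in hand, I do not anticipate a genuine obstacle; the proof is essentially a bookkeeping step. The only points meriting a line of care are (i) confirming that the knot-level bound \eqref{Morton-ineq-c-gen-1}, rather than a link-level variant, is the correct tool --- this is exactly why one works with $W_\pm(K,m)$ and not $W_2(K,m)$ here --- and (ii) the trivial-knot edge case, where $c(K)=0$; there $W_\pm(K,m)$ is a twist knot (or the unknot), and one checks the inequalities directly, or simply restricts to non-trivial $K$ as in Proposition~\ref{prop1-cr-nbr-cg-wd}.
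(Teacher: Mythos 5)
Your proposal is correct and follows exactly the paper's own proof: the authors likewise obtain \eqref{Morton-ineq-c-gen-2} by applying the knot-level bound \eqref{Morton-ineq-c-gen-1} to the knot $W_\pm(K,m)$ and concatenating it with Proposition~\ref{prop1-cr-nbr-cg-wd}(2). Your extra remarks on the edge cases are harmless but not needed beyond what the paper states.
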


\begin{proof}
This follows from Proposition \ref{prop1-cr-nbr-cg-wd} and the inequality (\ref{Morton-ineq-c-gen-1}) at once.

\end{proof}

In the rest of this section, we briefly review Tripp's conjecture for the canonical genus of Whitehead doubles of knots. For more details, see \cite{BJ,Nak,Tri}. In \cite{Tri}, Tripp proved that the canonical genus of an $m$-twisted Whitehead double $W_{\pm}(T(2,n), m)$ of the torus knot $T(2,n)$ is equal to its crossing number, that is, 
$g_c(W_{\pm}(T(2,n), m))=n=c(T(2,n)).$ The main part of the proof is to show that the maximum $z$-degree of HOMFLYPT polynomial of Whitehead doubles of $T(2,n)$ is equal to $2c(T(2,n))$. Then he made the following:

\begin{conj} \cite[J. J. Tripp]{Tri}\label{tripp-conj}
Let $K$ be any knot with the crossing number $c(K)$. Then for any integer $m,$ 
\begin{equation}\label{eq-tripp-conj}
g_c(W_{\pm}(K, m))=c(K).\end{equation}
\end{conj}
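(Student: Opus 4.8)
The plan is to prove Tripp's conjecture by the same two-sided estimate that appears in Proposition \ref{prop2-cr-nbr-cg-wd}, thereby collapsing the whole statement onto a single $z$-degree computation. The upper bound $g_c(W_\pm(K,m)) \le c(K)$ comes for free: choosing a diagram $D$ of $K$ with $c(D)=c(K)$, the canonical diagram $W_\pm(D,m)$ realises a Seifert surface of genus $c(K)$, so the right-hand inequality of (\ref{Morton-ineq-c-gen-2}) gives it at once. Thus the entire content of (\ref{eq-tripp-conj}) is the matching lower bound $c(K) \le g_c(W_\pm(K,m))$. Since (\ref{Morton-ineq-c-gen-1}) yields $\tfrac12 M(W_\pm(K,m)) \le g_c(W_\pm(K,m))$, it suffices to prove
$$M(W_\pm(K,m)) = 2c(K).$$
First I would strip away the clasp and the twisting parameter: Proposition \ref{prop3-cr-nbr-cg-wd}(1) converts this into $M(W_2(D,m)) = 2c(K)-1$ (the hypothesis $M(W_\pm(K,m))>0$ being automatic for nontrivial $K$, which I would verify separately), and Proposition \ref{prop3-cr-nbr-cg-wd}(2) then lets me replace $m$ by $w(D)$. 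Hence the full conjecture reduces to the single assertion that, for a minimal diagram $D$, the blackboard $2$-cable $W_2(D)=W_2(D,w(D))$ satisfies $M(W_2(D)) = 2c(D)-1$.

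The core of the argument is therefore the exact evaluation of the top $z$-degree of the doubled \emph{link}. Here one cannot simply invoke Theorem \ref{thm-Morton-ineq} on the cable: applied to the $4c(D)$-crossing diagram $W_2(D)$ it delivers only an \emph{upper} bound $M(W_2(D)) \le 4c(D)-s(W_2(D))+1$, and for a nontrivial knot the Seifert-circle count is too small for this to descend to the target $2c(D)-1$, so it supplies neither the required upper estimate nor any lower estimate. Instead I would compute $M(W_2(D))$ directly and sharply. The plan is to induct on $c(D)$, resolving the four crossings that the cabling attaches to each crossing of $D$ by repeated use of the skein relations (\ref{eq-skein-pos-1})--(\ref{eq-skein-pos-2}) together with the degree bookkeeping
$$M(L_+)\le\max\{M(L_-),M(L_0)+1\},\qquad M(L_0)\le\max\{M(L_+),M(L_-)\}-1,$$
which become \emph{equalities} precisely when the two competing degrees are distinct. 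The aim of the induction is a clean upper bound $M(W_2(D)) \le 2c(D)-1$ coupled with a proof that the coefficient of $z^{2c(D)-1}$ does not vanish, so that the two estimates meet.

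The decisive technical object is thus a sharp leading-term analysis of the cable, and I expect this to be exactly where the difficulty concentrates. Tripp's computation for $T(2,n)$ and Nakamura's extension to $2$-bridge knots both succeed because the companion diagram is \emph{alternating}: its crossings are coherently signed in a way that forces the top-degree contributions in the skein resolution of $W_2(D)$ to reinforce rather than cancel, so that Morton's inequality for the doubled link is attained. For an arbitrary knot there is no such coherence, and the leading coefficient can be destroyed by cancellation. This is not a mere gap in the method: as recalled in the Introduction, Nakamura exhibited a non-alternating knot for which $c(K) \ne g_c(W_\pm(K,m))$, so the identity $M(W_2(D))=2c(D)-1$ genuinely fails for some $D$. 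Consequently the final step of the plan --- nonvanishing of the top coefficient --- cannot be carried out for all knots, and the argument closes only when the companion's diagram controls the leading term. This is precisely the mechanism to be exploited for the family $\mathcal K^3$ in Theorem \ref{main-thm-1}, where the quasitoric structure of $\beta_r$ plays the role that alternation plays for $T(2,n)$, and it is exactly why (\ref{eq-tripp-conj}) must be weakened to its alternating form, Conjecture \ref{Nakam-conj-0}.
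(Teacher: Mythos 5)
The statement you were asked to prove is Conjecture \ref{tripp-conj}, and the paper contains no proof of it --- nor could it: as the paper itself records, Nakamura observed that the non-alternating torus knot $T(4,3)$ fails the equality (\ref{eq-tripp-conj}), so the conjecture as stated is false, and what survives is its alternating weakening, Conjecture \ref{Nakam-conj-0}, proved here only for the family $\mathcal K^3$. Your write-up reaches exactly this conclusion, and honestly: you do not claim a proof, you carry out the correct reduction and then identify precisely where and why it must break. Your reduction is moreover the same machinery the paper actually runs on its provable subfamily: the upper bound $g_c(W_\pm(K,m))\le c(K)$ from Proposition \ref{prop2-cr-nbr-cg-wd}, the stripping of clasp and twisting via Proposition \ref{prop3-cr-nbr-cg-wd} (this is verbatim the chain of equalities in the proof of Theorem \ref{main-thm-2}), and the reduction of everything to the single identity $\max\deg_z P_{W_2(D)}(v,z)=2c(D)-1$ for a minimal diagram $D$, which is what Theorem \ref{main-thm-1} establishes for the closed quasitoric braids $\hat\beta_r$ and Proposition \ref{BJprop4-cr-nbr-cg-wd} then propagates under twist insertion. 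Your diagnosis that the sole genuine obstruction is the nonvanishing of the top $z$-coefficient, which can cancel without some coherence hypothesis on the companion diagram, is also the correct reading of why the conjecture fails in general.

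One technical slip is worth correcting. You assert that Theorem \ref{thm-Morton-ineq} applied to the cable diagram cannot even deliver the upper estimate $M(W_2(D))\le 2c(D)-1$ because the Seifert-circle count of $W_2(D)$ is too small. In fact, for the standard antiparallel doubled diagram the opposite orientations of the two parallel strands produce many Seifert circles, and Morton's bound is exactly sharp at the target value: in the paper's notation, $N_r=c(D_r)-s(D_r)+1=6r-1=2c(\hat\beta_r)-1$ by (\ref{defn-N_r}), so the upper bound comes for free from Morton's inequality alone. The entire difficulty in Theorem \ref{main-thm-1} is therefore the lower bound --- showing the coefficient of $z^{2c(D)-1}$ survives the skein resolution --- which is what the induction together with Lemma \ref{main-lem-1} secures for the quasitoric family, and which is the step that genuinely has no analogue for arbitrary knots.
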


In \cite{Nak}, Nakamura has extended the tripp's argument to show that for $2$-bridge knot $K$, Conjecture \ref{tripp-conj} holds. He also observed that the torus knot $T(4,3)$, which is not an alternating knot, does not satisfy the equality (\ref{eq-tripp-conj}) and modified the tripp's conjecture to Conjecture \ref{Nakam-conj-0} in Section 1.
In \cite{BJ}, Brittenham and Jensen showed that Conjecture \ref{Nakam-conj-0} holds for alternating pretzel knots $P(k_1,\ldots,k_n), k_1,\ldots, k_n \geq 1$ \cite[Theorem 1]{BJ}. The main tool of the proof is the following proposition \ref{BJprop2-cr-nbr-cg-wd} that follows at once by applying Proposition \ref{BJprop4-cr-nbr-cg-wd} twice, which give a method for building new knots $K$ satisfying $\max\deg_z P_{W_\pm(K,m)}(v,z)=2c(K).$ 

\begin{prop}\cite[Proposition 2]{BJ}\label{BJprop2-cr-nbr-cg-wd}
If $K'$ is a knot satisfying $$\max\deg_z P_{W_\pm(K',m)}(v,z)=2c(K'),$$ and if for a $c(K')$-minimizing diagram $D'$ for $K'$ we replace a crossing of $D'$, thought of as a half-twist, with three half-twists as shown in Figure \ref{fig:3-half twists}, producing a knot $K$, then $$\max\deg_z P_{W_\pm(K,m)}(v,z)=2c(K),$$ and therefore $g_c(W_\pm(K,m))=c(K)$.
\end{prop}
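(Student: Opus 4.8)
The plan is to notice that the asserted equality splits into an automatic upper bound and a nontrivial lower bound, and that only the latter requires work. By Morton's inequality in the form of Proposition \ref{prop2-cr-nbr-cg-wd}, one always has $\max\deg_z P_{W_\pm(K,m)}(v,z)\le 2c(K)$, and since the modified diagram $D$ (obtained from the $c(K')$-minimizing diagram $D'$ by turning one half-twist into three same-signed half-twists) has $c(D')+2=c(K')+2$ crossings, $c(K)\le c(K')+2$. Hence it suffices to prove the single lower bound $\max\deg_z P_{W_\pm(K,m)}(v,z)\ge 2c(K')+4$. Indeed, this lower bound together with the two upper estimates forces $c(K)=c(K')+2$ and $\max\deg_z P_{W_\pm(K,m)}(v,z)=2c(K)$ simultaneously, and then the sandwiching
\begin{equation*}
c(K')+2\le \tfrac12\max\deg_z P_{W_\pm(K,m)}(v,z)\le g_c(W_\pm(K,m))\le c(K)=c(K')+2
\end{equation*}
coming from (\ref{Morton-ineq-c-gen-2}) yields $g_c(W_\pm(K,m))=c(K)$ at once. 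So the entire content is the degree lower bound.

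To prove it I would pass to the doubled link, where the bookkeeping is cleanest. By Proposition \ref{prop3-cr-nbr-cg-wd}(1) the hypothesis $\max\deg_z P_{W_\pm(K',m)}=2c(K')>0$ is equivalent to $M(W_2(K',m))=2c(K')-1$, and (again by Proposition \ref{prop3-cr-nbr-cg-wd}(1)) it will suffice to show $M(W_2(K,m))=2c(K')+3$, since then $M(W_\pm(K,m))\ge M(W_2(K,m))+1=2c(K')+4$. The advantage of working with $W_2$ is that the doubled pattern $U$ makes sense for an arbitrary companion diagram, knot or link, whereas the two successive modifications of $D'$ pass through a two-component companion (one half-twist is odd, two is even, three is odd again). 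I would therefore isolate the effect of a single added half-twist as a lemma, stated directly in terms of $M(W_2)$ of the doubled diagram, which is exactly the role of the auxiliary statement the authors invoke twice: inserting one same-signed half-twist into a twist region of the companion raises the maximum $z$-degree of the doubled link by exactly $2$. Applying this lemma twice, as the companion's twist region grows from one half-twist to two and then to three, gives $M(W_2(K,m))=M(W_2(K',m))+2+2=2c(K')+3$, as required.

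The heart is thus the single-half-twist lemma, which I would prove by a local skein computation. In the doubled diagram, an added half-twist of the companion becomes a $2$-cabled crossing, that is, a block of four crossings whose two anti-parallel cable strands must be oriented with care. I would resolve these new crossings with the HOMFLYPT skein relation and track maximum $z$-degrees using the three inequalities recorded just before Proposition \ref{prop3-cr-nbr-cg-wd}, exploiting the clause that the bound is attained whenever the two competing terms have distinct $z$-degrees. The aim is to exhibit one dominant term whose underlying link is the doubled link of the un-enlarged companion, carrying degree boosted by precisely $2$ (consistent with the net change in the Morton ceiling $c(\widetilde D)-s(\widetilde D)+1$ caused by the four new crossings), while every remaining term of the resolution has strictly smaller maximum $z$-degree. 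The base case of the torus knots $T(2,n)$, where each companion half-twist is known to raise $M(W_\pm)$ from $2n$ to $2n+2$, is a useful consistency check on the value $2$.

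The step I expect to be the main obstacle is the \emph{strict} degree separation: showing that the non-dominant terms produced by resolving the four new crossings genuinely have smaller maximum $z$-degree, so that they cannot cancel or lower the leading contribution and the skein inequalities become equalities. This is precisely where the inductive hypothesis $\max\deg_z P_{W_\pm(K',m)}=2c(K')$ must be fed in, since it says the companion's double already realizes its Morton bound, together with a careful count of crossings and Seifert circles of the intermediate diagrams to bound the error terms from above via Morton's inequality while the dominant term is evaluated exactly. One must also use that the three half-twists share a common sign (as in the referenced figure), so that the enlarged twist region gains two crossings with no cancellation and the crossing and Seifert-circle counts stay at their expected values; this is what ties the computed degree $2c(K')+4$ back to the geometric crossing number and closes the argument.
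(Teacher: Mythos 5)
Your proposal takes essentially the same route as the paper: the paper obtains this proposition by applying Proposition \ref{BJprop4-cr-nbr-cg-wd} twice (once for each added same-sign half-twist), combined with the $W_\pm$/$W_2$ translations of Proposition \ref{prop3-cr-nbr-cg-wd} and the Morton bound of Proposition \ref{prop2-cr-nbr-cg-wd} to pass back to $\max\deg_z P_{W_\pm(K,m)}$ and to $g_c$. Your ``single-half-twist lemma'' is exactly Proposition \ref{BJprop4-cr-nbr-cg-wd}, so the argument is correct and structurally identical to the paper's.
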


\begin{figure}[ht]
\centerline{
\xy 
(12,17);(16,13) **@{-},
(17,12);(21,8) **@{-},
(12,8);(21,17) **@{-},
(25,12.5);(35,12.5) **@{-} ?>*\dir{>},
(0,2) *{},
\endxy
\quad\quad
\xy (10,0);(10,1) **@{-},
(15,0);(15,1) **@{-}, (14.5,2.5);
(13,4) **@{-}, (12,5);(10.5,6.5) **@{-}, (10.5,2.5);(14.5,6.5) **@{-},
(14.5,8.5);(13,10) **@{-},
(12,11);(10.5,12.5) **@{-},
(10.5,8.5);(14.5,12.5) **@{-},
(10,1);(10.5,2.5) **\crv{(10,2)},
(15,1);(14.5,2.5) **\crv{(15,2)},
(14.5,6.5);(14.5,8.5) **\crv{(15.5,7.5)},
(10.5,6.5);(10.5,8.5) **\crv{(9.5,7.5)},
(10,1);(10.5,2.5) **\crv{(10,2)},
(15,1);(14.5,2.5) **\crv{(15,2)},
(14.5,6.5);(14.5,8.5) **\crv{(15.5,7.5)},
(10.5,6.5);(10.5,8.5) **\crv{(9.5,7.5)},
(14.5,12.5);(14.5,14.5) **\crv{(15.5,13.5)},
(10.5,12.5);(10.5,14.5) **\crv{(9.5,13.5)},
(14.5,14.5);(13,16) **@{-},
(12,17);(10.5,18.5) **@{-},
(10.5,14.5);(14.5,18.5) **@{-},
(10,20);(10.5,18.5) **\crv{(10,19)},
(15,20);(14.5,18.5) **\crv{(15,19)},
(10,20);(10,21) **@{-},
(15,20);(15,21) **@{-},
(25,10) *{\text{or}},
\endxy
\quad\quad
\xy (0,10);(1,10) **@{-}, 
(0,15);(1,15) **@{-}, (2.5,14.5);
(4,13) **@{-}, (5,12);(6.5,10.5) **@{-}, (2.5,10.5);(6.5,14.5) **@{-},
(8.5,14.5);(10,13) **@{-},
(11,12);(12.5,10.5) **@{-},
(8.5,10.5);(12.5,14.5) **@{-},
(1,10);(2.5,10.5) **\crv{(2,10)},
(1,15);(2.5,14.5) **\crv{(2,15)},
(6.5,14.5);(8.5,14.5) **\crv{(7.5,15.5)},
(6.5,10.5);(8.5,10.5) **\crv{(7.5,9.5)},
(1,10);(2.5,10.5) **\crv{(2,10)},
(1,15);(2.5,14.5) **\crv{(2,15)},
(6.5,14.5);(8.5,14.5) **\crv{(7.5,15.5)},
(6.5,10.5);(8.5,10.5) **\crv{(7.5,9.5)},
(12.5,14.5);(14.5,14.5) **\crv{(13.5,15.5)},
(12.5,10.5);(14.5,10.5) **\crv{(13.5,9.5)},
(14.5,14.5);(16,13) **@{-},
(17,12);(18.5,10.5) **@{-},
(14.5,10.5);(18.5,14.5) **@{-},
(20,10);(18.5,10.5) **\crv{(19,10)},
(20,15);(18.5,14.5) **\crv{(19,15)},
(20,10);(21,10) **@{-},
(20,15);(21,15) **@{-},
(0,2) *{},
\endxy}
\vspace*{0pt}\caption{}\label{fig:3-half twists} 
\end{figure}

\begin{prop}\cite[Proposition 4]{BJ}\label{BJprop4-cr-nbr-cg-wd}
If $L'$ is a non-split link with a diagram $D'$ satisfying $c(D')=c(L')$ and $$\max\deg_z P_{W_2(D')}(v,z)=2c(D')-1,$$ and $L$ is a link having diagram $D$ obtained from $D'$ by replacing a crossing in the diagram $D'$ with a full twist (so that $c(D)=c(D')+1$), then $$\max\deg_z P_{W_2(D)}(v,z)=2c(D)-1=\max\deg_z P_{W_2(D')}(v,z)+2.$$
\end{prop}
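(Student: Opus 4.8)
The plan is to treat this as the inductive step it is meant to be: we are handed a diagram $D'$ for which the extremal value $M(W_2(D'))=2c(D')-1$ is already known, and we must show that inserting one full twist raises $M$ by exactly $2$. The first thing I would record is the local difference between the two doubled diagrams. Passing from $D'$ to $D$ replaces a single crossing (one core half-twist) by a full twist (two core half-twists), so in the parallel-doubled diagrams $W_2(D)$ is obtained from $W_2(D')$ by inserting the $2$-cable of one extra crossing, a tangle of four crossings. Since $c(D)=c(D')+1$, the target equality $M(W_2(D))=2c(D)-1=M(W_2(D'))+2$ is exactly the statement that this four-crossing insertion raises the maximal $z$-degree by two. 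Throughout I would use Proposition \ref{prop3-cr-nbr-cg-wd}(2) to move freely between the standard (writhe-normalized) doubled diagram and any $m$-twisted doubled diagram without changing $M$, so that the framing bookkeeping created by the insertion never interferes.

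The core of the argument is a HOMFLYPT skein computation on the four crossings of the inserted $2$-cable, using the relations (\ref{eq-skein-pos-1})--(\ref{eq-skein-pos-2}) and the three $z$-degree inequalities established just after them. I would resolve these crossings so as to exhibit $P_{W_2(D)}$ as a combination of the HOMFLYPT polynomials of two families of links: on the ``switching'' branches the inserted cabled crossing simplifies, by Reidemeister moves together with Proposition \ref{prop3-cr-nbr-cg-wd}(2), back to $W_2(D')$ (or a framing variant of it), while on the ``smoothing'' branches two of the four cabled strands are merged, producing doubled diagrams of links obtained from $L'$ by smoothing the chosen crossing, hence with strictly fewer crossings. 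Tracking the powers of $v$ and the accompanying factors of $z$, the branch on which the inserted crossing is reduced back to $W_2(D')$ carries a net $z$-degree shift of $+2$, contributing a term of degree $M(W_2(D'))+2=2c(D)-1$; I would then bound the $z$-degrees of all smoothing branches using Theorem \ref{thm-Morton-ineq} (Morton's inequality) applied to those simpler doubled diagrams, together with the hypotheses that $D'$ is crossing-minimal, $c(D')=c(L')$, and $L'$ is non-split. The upper bound $M(W_2(D))\le 2c(D)-1$ then follows from the degree inequalities.

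The step I expect to be the real obstacle is the matching lower bound, i.e. the non-cancellation. A priori the top-degree contributions from the several resolution branches could add up to something of strictly smaller degree, so one must show that the term of degree $2c(D)-1$ genuinely survives. The mechanism I would rely on is the distinctness clause in the degree inequalities (``the equality holds if the two terms in the right-hand side are distinct''): the argument only succeeds if I can arrange the resolution so that exactly one branch attains degree $2c(D)-1$ while every competing branch has strictly smaller $z$-degree. Verifying this strict separation is the delicate point, and it is precisely here that the hypotheses $c(D')=c(L')$ and the non-splitness of $L'$ are essential: they are what prevent a smoothed or merged link from having an anomalously large $z$-degree that could collide with, and cancel, the leading term. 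Once the strict inequality $M(\text{smoothing branches})<2c(D)-1$ is secured, the distinct-degree case of the inequalities upgrades the upper bound to the exact equality $M(W_2(D))=2c(D)-1=M(W_2(D'))+2$.
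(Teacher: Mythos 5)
First, a point of reference: the paper does not prove this proposition at all --- it is imported verbatim, with citation, from Brittenham--Jensen \cite{BJ}. So your proposal can only be measured against the technique that \cite{BJ} and this paper use for statements of this kind (the proof of Theorem \ref{main-thm-1} together with Lemma \ref{main-lem-1}). Your architecture --- resolve the four crossings of the inserted $2$-cabled crossing, recognize one branch as a framing-shifted copy of $W_2(D')$ carrying a net $z^2$ shift, bound every other branch strictly below $2c(D)-1$, and then invoke the distinct-degree clause --- is indeed that technique. (In passing, your upper bound is more laborious than necessary: Morton's inequality applied directly to the diagram $W_2(D)$ already gives $\max\deg_z P_{W_2(D)}\le c(W_2(D))-s(W_2(D))+1=4c(D)-(2c(D)+2)+1=2c(D)-1$, exactly as in the definition of $N_r$ in (\ref{defn-N_r}); no skein resolution is needed for that half.)

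The genuine gap is the half you yourself flag as ``the real obstacle'': you never establish the strict separation, and the two tools you name for it would not suffice as stated. First, bounding the competing branches ``using Morton's inequality applied to those simpler doubled diagrams'' fails in general, because smoothing or switching crossings in $W_2(D)$ lowers the Seifert circle count along with the crossing count, so the Morton bound of a branch diagram need not be strictly below $2c(D)-1$. This is precisely why the present paper must spend all of Section \ref{sect-pf-lemma} on Lemma \ref{main-lem-1}, performing further skein resolutions and isotopies (e.g.\ for $D_r^4$ and $D_r^5$) before Morton's inequality yields usable bounds; that extra work is the actual content of such proofs, and it is absent from your plan. Second, your fallback --- ``arrange the resolution so that exactly one branch attains degree $2c(D)-1$'' --- is not available in the natural resolution of the inserted four-crossing tangle: as in the paper's own identity (\ref{eqn1-pf-main-thm-1}), \emph{two} of the $z^2$-branches (there $D_r^1$ and $D_r^2$) are doubles of diagrams one crossing smaller and both attain the top degree, so one must additionally rule out cancellation between their leading terms, which requires carrying coefficient (sign) information through the induction --- an ingredient nowhere in your proposal. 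Finally, your assertion that non-splitness and $c(D')=c(L')$ are ``what prevent a smoothed or merged link from having an anomalously large $z$-degree'' is unsubstantiated: Morton's bound is a statement about diagrams, not links, so minimality of $D'$ gives no direct control over the branch diagrams; those hypotheses enter elsewhere.
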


In fact, Brittenham and Jensen proved that Conjecture \ref{Nakam-conj-0} holds for a larger class of alternating knots, including $(2,n)$-torus knots, $2$-bridge knots, and alternating pretzel knots, as in the following proposition \ref{BJprop3-cr-nbr-cg-wd}:

\begin{prop}\cite[Proposition 3]{BJ}\label{BJprop3-cr-nbr-cg-wd}
Let $\mathcal K$ be the class of knots having diagrams which can be obtained from the standard  diagram of the left- or right-handed trefoil knot $T(2,3)$, the $(2,3)$ torus knot, by repeatedly replacing a crossing, thought of as a half twist, by a full twist. Then for every $K \in \mathcal K$, $$\max\deg_z P_{W_\pm(K,m)}(v,z)=2c(K),$$ and so $g_c(W_\pm(K,m))=c(K)$.
\end{prop}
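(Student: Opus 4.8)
The plan is to prove the statement by induction on the number of crossing replacements needed to produce a diagram of $K$ from the standard trefoil diagram, using Proposition \ref{BJprop2-cr-nbr-cg-wd} as the single inductive step and Tripp's computation for $T(2,3)$ as the base case. By the very definition of $\mathcal K$, every $K\in\mathcal K$ admits a diagram $D=D_N$ at the end of a finite chain
$$
D_0\ \to\ D_1\ \to\ \cdots\ \to\ D_N=D,
$$
in which $D_0$ is the standard diagram of the left- or right-handed $T(2,3)$ and each $D_i$ arises from $D_{i-1}$ by replacing one crossing as in Figure \ref{fig:3-half twists}. Writing $K_i$ for the knot carried by $D_i$, so that $K_0=T(2,3)$ and $K_N=K$, I would establish by induction on $i$ the assertion $P(i)$: the diagram $D_i$ realizes the minimal crossing number of $K_i$, and $\max\deg_z P_{W_\pm(K_i,m)}(v,z)=2c(K_i)$ for every integer $m$.

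First I would settle the base case $P(0)$. The standard diagram $D_0$ of $T(2,3)$ is reduced and alternating with three crossings, hence $c(D_0)=3=c(T(2,3))$, and Tripp \cite{Tri} (his theorem on $W_\pm(T(2,n),m)$, specialized to $n=3$) gives $\max\deg_z P_{W_\pm(T(2,3),m)}(v,z)=6=2c(T(2,3))$. This is exactly $P(0)$.

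Next I would carry out the inductive step. Assuming $P(i-1)$, the diagram $D_{i-1}$ is a $c(K_{i-1})$-minimizing diagram of $K_{i-1}$ satisfying $\max\deg_z P_{W_\pm(K_{i-1},m)}(v,z)=2c(K_{i-1})$, and $K_i$ is obtained from it by one crossing replacement of the type in Figure \ref{fig:3-half twists}. Hence Proposition \ref{BJprop2-cr-nbr-cg-wd} applies verbatim and yields
$$
\max\deg_z P_{W_\pm(K_i,m)}(v,z)=2c(K_i).
$$
To keep the induction running I must check that the new diagram $D_i$ again realizes $c(K_i)$. One clean route is to note that the replacement of Figure \ref{fig:3-half twists} turns a reduced alternating diagram into a reduced alternating diagram, so $D_i$ is crossing-minimal by the (now proved) Tait conjecture; alternatively, the equality $\max\deg_z P_{W_\pm(K_i,m)}=2c(D_i)$ coming out of the underlying computation, combined with the Morton bound (\ref{Morton-ineq-c-gen-1}) and Proposition \ref{prop2-cr-nbr-cg-wd}, pins $c(D_i)=\tfrac12\max\deg_z P_{W_\pm(K_i,m)}\le g_c(W_\pm(K_i,m))\le c(K_i)\le c(D_i)$ and forces $c(D_i)=c(K_i)$. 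Either way $P(i)$ holds, and the induction is complete.

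Finally I would read off the conclusion. For arbitrary $K\in\mathcal K$, assertion $P(N)$ gives $\max\deg_z P_{W_\pm(K,m)}(v,z)=2c(K)$, and Proposition \ref{prop2-cr-nbr-cg-wd} then squeezes
$$
c(K)=\tfrac12\max\deg_z P_{W_\pm(K,m)}(v,z)\le g_c(W_\pm(K,m))\le c(K),
$$
so that $g_c(W_\pm(K,m))=c(K)$, as required. I expect the genuine difficulty of the whole argument to sit entirely inside Proposition \ref{BJprop2-cr-nbr-cg-wd} (equivalently, the two applications of Proposition \ref{BJprop4-cr-nbr-cg-wd}): namely, tracking precisely how the maximum $z$-degree of the HOMFLYPT polynomial of the doubled link behaves when a single crossing is expanded into the configuration of Figure \ref{fig:3-half twists}, and verifying that the top-degree term survives without cancellation. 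Since that input is granted here, the only remaining obstacle at the level of this statement is the bookkeeping above that propagates crossing-minimality along the chain, which the Morton inequality (or reducedness of the alternating diagrams) disposes of automatically.
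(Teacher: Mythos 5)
There is a genuine gap: your induction generates a strictly smaller class of knots than the class $\mathcal K$ in the statement. The class $\mathcal K$ is built by repeatedly replacing a single crossing by a \emph{full twist}, so each step adds exactly one crossing and the intermediate diagrams are allowed to be two-component links; only the final result must be a knot. Your chain $D_0\to D_1\to\cdots\to D_N$ instead uses the knot-to-knot move of Proposition \ref{BJprop2-cr-nbr-cg-wd} (one crossing replaced by three half-twists), which adds two crossings per step. Since each such replacement turns a reduced alternating diagram into a reduced alternating diagram, every knot you can reach from the $3$-crossing trefoil diagram has a reduced alternating diagram with an odd number of crossings, hence odd crossing number. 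But $\mathcal K$ contains knots of even crossing number: replacing one crossing of the trefoil $P(1,1,1)$ by a full twist in the pretzel direction yields the reduced alternating diagram $P(2,1,1)$ of the figure-eight knot, so $4_1\in\mathcal K$; more generally $\mathcal K$ contains all $2$-bridge knots and alternating pretzel knots, many of even crossing number. None of these is reachable by your induction, because a chain of full-twist replacements of odd length cannot be regrouped into pairs of replacements at a common site, which is what a Proposition \ref{BJprop2-cr-nbr-cg-wd} step amounts to.

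The repair is precisely the route the paper takes for its generalization (Theorem \ref{main-thm-2}, whose $r=1$ case is this proposition, as noted in the remark following it): run the induction at the level of \emph{doubled links} rather than Whitehead doubles of knots. Proposition \ref{BJprop4-cr-nbr-cg-wd} is stated for non-split links, so it applies to every step of the full-twist chain whether the intermediate diagram is a knot or a link, and yields $\max\deg_z P_{W_2(D)}(v,z)=2c(D)-1$ for the final diagram $D$ of $K$; the base case is the direct computation $\max\deg_z P_{W_2(T(2,3))}(v,z)=5=2\cdot 3-1$ (the $r=1$ case of Theorem \ref{main-thm-1}), not Tripp's theorem for $W_\pm$. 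Only then does one pass to Whitehead doubles: Proposition \ref{prop3-cr-nbr-cg-wd} gives $\max\deg_z P_{W_\pm(K,m)}(v,z)=\max\deg_z P_{W_2(D)}(v,z)+1=2c(D)=2c(K)$, using that $D$ is reduced alternating so that $c(D)=c(K)$, and the Morton squeeze of Proposition \ref{prop2-cr-nbr-cg-wd} then forces $g_c(W_\pm(K,m))=c(K)$. Your final squeeze and your bookkeeping of crossing-minimality are fine; the missing idea is the link-level induction through $W_2$, which is what makes the odd-length (and mixed-site) chains of full-twist replacements accessible.
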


The remaining part of this paper will be devoted to enlarge the class $\mathcal K$ in Proposition \ref{BJprop3-cr-nbr-cg-wd} by applying Brittenham and Jensen's argument starting with a certain class of closed quasitoric braids.


\section{Maximum $z$-degree of HOMFLYPT polynomials for doubled links of closed quasitoric braids $T(r+1,3)$}\label{sect-miwd3-plrq}

Let $r\geq 1$ be an arbitrary given integer and let $B_{r+1}$ be the $(r+1)$-strand braid group with the standard generators $\sigma_1, \sigma_2, \ldots, \sigma_{r}$ as shown in Figure \ref{fig-generators}.
\begin{figure}[ht]
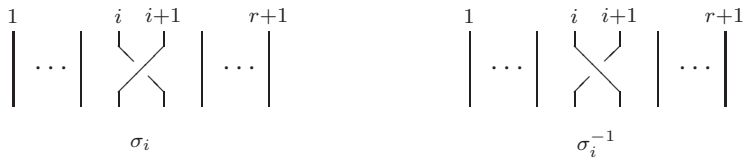

\vspace*{5pt} \centerline{ \xy (20,5);(20,7) **@{-},
(20,13);(20,15) **@{-}, (20,7);(26,13) **@{-}, (20,13);(22,11)
**@{-}, (24,9);(26,7) **@{-}, (26,5);(26,7) **@{-},
(26,13);(26,15) **@{-},
(15,5);(15,15) **@{-}, (6,5);(6,15) **@{-}, (31,5);(31,15) **@{-},
(40,5);(40,15) **@{-}, (11,10)*{\cdots}, (36,10)*{\cdots},
(6,17)*{_{1}}, (20,17)*{_{i}}, (40,17)*{_{r+1}}, (26,17)*{_{i+1}},
(23,0)*{_{\sigma_i}},
\endxy
  \qquad\qquad\qquad
\xy (20,5);(20,7) **@{-}, (20,13);(20,15) **@{-}, (20,13);(26,7)
**@{-}, (20,7);(22,9) **@{-}, (24,11);(26,13) **@{-},
(26,5);(26,7) **@{-}, (26,13);(26,15) **@{-},
(15,5);(15,15) **@{-}, (6,5);(6,15) **@{-}, (31,5);(31,15) **@{-},
(40,5);(40,15) **@{-}, (11,10)*{\cdots}, (36,10)*{\cdots},
(6,17)*{_{1}}, (20,17)*{_{i}}, (40,17)*{_{r+1}}, (26,17)*{_{i+1}},
(23,0)*{_{\sigma_i^{-1}}},
\endxy}
{}\vspace*{5pt}\caption{$\sigma_i$ and $\sigma_i^{-1}$}
\label{fig-generators}
\end{figure}

We recall that a toric braid $T(p,q)$ of type $(p,q)$ is a $p$-strand braid given by the following formula:
\begin{equation*}
T(p,q)=(\sigma_1\cdots\sigma_{p-1})^q.
\end{equation*}
The closures of toric braids yield all torus knots and links. In 2002, Manturov showed that all knots and links can be represented by the closures of a small class of braids, called quasitoric braids. We briefly review here the quasitoric braids; for more details, see \cite{Man}.

Let $m\geq 1$ and $n\geq 1$ be two integers. A braid
$\beta$ is said to be a {\it quasitoric braid} of type $(m,n)$ if it
can be expressed as an $(m+1)$-braid of the form
$$\beta=(\sigma_1^{\epsilon_{11}}\sigma_2^{\epsilon_{21}}\cdots \sigma_{m}^{\epsilon_{m1}})
(\sigma_1^{\epsilon_{12}}\sigma_2^{\epsilon_{22}}\cdots
\sigma_{m}^{\epsilon_{m2}}) \cdots
(\sigma_1^{\varepsilon_{1n}}\sigma_2^{\epsilon_{2n}}\cdots
\sigma_{m}^{\epsilon_{mn}}),$$ where $\epsilon_{ij}=\pm 1$ for all
$i=1,2,\ldots,m$ and $j=1,2,\ldots, n$. In other words, a quasitoric braid of type $(m,n)$ is a braid obtained from the standard diagram of the toric braid $T(m,n)$ by switching some crossing types. It is worth noting that the quasitoric $m$-braids form a proper
subgroup of the $m$-braid group $B_{m}$(see \cite[Proposition
1]{Man}). One of the particular utilities of the quasitoric braids is the following:

\begin{thm}\cite{Man}\label{Man-thm}
Any link can be obtained as a closure of some
quasitoric braid.
\end{thm}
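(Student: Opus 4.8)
The plan is to deduce the statement from Alexander's classical theorem together with a reorganization of an arbitrary braid word into the rigid quasitoric pattern, tracking the isotopy type of the closure under Markov moves. First I would invoke Alexander's theorem to present the given link $L$ as the closure $\hat\beta$ of some braid $\beta\in B_{m+1}$, and fix a word $\beta=\sigma_{i_1}^{\delta_1}\sigma_{i_2}^{\delta_2}\cdots\sigma_{i_\ell}^{\delta_\ell}$ in the standard generators. The goal is then to replace $\beta$, without altering the isotopy type of $\hat\beta$, by a braid of the quasitoric form $\prod_{j=1}^{n}\bigl(\sigma_1^{\epsilon_{1j}}\sigma_2^{\epsilon_{2j}}\cdots\sigma_{m'}^{\epsilon_{m'j}}\bigr)$ in some (possibly larger) braid group $B_{m'+1}$. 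Here I would use freely that conjugation and stabilization are Markov moves and hence preserve the closure, and that the set of quasitoric braids forms a subgroup of $B_{m'+1}$ (Manturov's Proposition~1, cited above), so that products and inverses of complete rows are again expressible as products of rows.

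The core mechanism I would exploit is the cyclic conjugation relation $\rho\,\sigma_i\,\rho^{-1}=\sigma_{i+1}$ for $1\le i\le m'-1$, where $\rho=\sigma_1\sigma_2\cdots\sigma_{m'}$ is the positive full row. Since $\rho$ and hence $\rho^{-1}$ lie in the quasitoric subgroup, this relation lets me transport an isolated generator across a complete row at the cost only of shifting its index, and thus lets me assemble the letters of $\beta$, one at a time, into the head of a developing ascending row. Reading the word $\beta$ from left to right, I would greedily fill each row $\sigma_1^{\epsilon_1}\cdots\sigma_{m'}^{\epsilon_{m'}}$: whenever the next letter $\sigma_{i_t}^{\delta_t}$ matches the index the current row expects, I place it, and whenever it does not, I insert filler crossings to complete the row, choosing the fillers so that they are removable, either as cancelling pairs governed by the subgroup structure or as crossings that disappear under a subsequent destabilization.

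Because every complete row induces the same full $(m'+1)$-cycle on the strands, the permutation of any quasitoric braid is forced to be a power of that cycle, while a general $\beta$ has no reason to satisfy this. I would therefore use Markov stabilization not merely to supply room for fillers but also to correct the underlying permutation, adjoining strands (each carrying a single removable crossing) until the total winding $t$ and strand number $m'+1$ satisfy $\gcd(m'+1,t)=\mu$, where $\mu$ is the number of components of $L$; this matches the component count of the closure of a quasitoric braid to that of $L$. Iterating the filling and stabilizing procedure over all $\ell$ letters yields a quasitoric braid whose closure is Markov-equivalent to $\hat\beta$, hence isotopic to $L$, which is the desired conclusion.

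The step I expect to be the main obstacle is precisely this reorganization: showing that an arbitrary braid word can be forced into complete ascending rows using only braid relations, conjugation and stabilization, while guaranteeing that every inserted filler crossing is genuinely removable and that the permutation obstruction is resolved. Making the greedy filling terminate, and verifying at each stage that the closure is unchanged, is where the real bookkeeping lies; I would organize it as an induction on the number of letters of $\beta$ that still violate the toric pattern, with the cyclic relation $\rho\,\sigma_i\,\rho^{-1}=\sigma_{i+1}$ serving as the engine that lowers this defect by one at each step.
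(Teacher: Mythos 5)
Your scaffolding matches the actual argument of Manturov (which the paper does not prove but only cites from \cite{Man}): Alexander's theorem, invariance of the closure under conjugation and stabilization, the subgroup property of quasitoric braids, and the observation that the permutation underlying a quasitoric braid on $m'+1$ strands must be a power of the full $(m'+1)$-cycle, so that stabilization is needed to repair the permutation. But the core of your proof --- the ``greedy filling'' that converts the stabilized, conjugated word into a product of complete ascending rows --- is not a proof; it is precisely the statement to be proven, and you acknowledge as much. Neither of your filler mechanisms suffices: inserting cancelling pairs $\sigma_j\sigma_j^{-1}$ never changes the braid, so whether the rows can be completed this way is exactly the question of whether the braid already lies in the quasitoric subgroup; and ``crossings that disappear under a subsequent destabilization'' is not available once rows are formed, since destabilization requires $\sigma_{m'}$ to occur exactly once in the entire word, whereas every completed row contains one. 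Likewise the relation $\rho\sigma_i\rho^{-1}=\sigma_{i+1}$ degrades as you use it: it holds only for $i\le m'-1$, and each transport raises the index, so iteration pushes letters out of the range where the relation applies; there is no defect function that it visibly decreases, so your proposed induction has no engine.

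What is missing is Manturov's key lemma: \emph{every pure braid is quasitoric}, proved by explicitly exhibiting the standard generators $b_{ij}=(\sigma_{j-1}\cdots\sigma_{i+1})\,\sigma_i^2\,(\sigma_{i+1}^{-1}\cdots\sigma_{j-1}^{-1})$ of the pure braid group as quasitoric braids. With that lemma the theorem follows cleanly along the lines you set up: first stabilize so that all cycles of the permutation of $\beta$ have the same length (your condition $\gcd(m'+1,t)=\mu$ with $t$ the ``total winding'' is not the right invariant --- what must be arranged is equal cycle lengths, after which a conjugation makes the permutation exactly the $t$-th power of the standard cycle); then $\beta\cdot T(m'+1,t)^{-1}$ is a pure braid, hence quasitoric, and since the toric braid is quasitoric and the quasitoric braids form a subgroup, $\beta$ itself is quasitoric, so its closure realizes $L$. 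Without the pure-braid lemma, or an equivalent explicit construction, your greedy filling assumes exactly what it needs to prove.
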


In this section we consider a special class of quasitoric braids $\beta_r$ of type $(r+1,3)$ for all integers $r \geq 1$, which is a $(r+1)$-braid of the form:
\begin{equation}\label{braid-beta}
\beta_r=(\sigma_r^{\epsilon_{11}}
\sigma_{r-1}^{\epsilon_{21}}\cdots \sigma_{1}^{\epsilon_{r1}})
(\sigma_r^{\epsilon_{12}}\sigma_{r-1}^{\epsilon_{22}}\cdots
\sigma_{1}^{\epsilon_{r2}})
(\sigma_r^{\epsilon_{13}}\sigma_{r-1}^{\epsilon_{23}}\cdots
\sigma_{1}^{\epsilon_{r3}}),\end{equation} where  \begin{align}\label{braid-beta-1}
&\epsilon_{ij}=\pm 1~(1\leq i \leq r,~1\leq j \leq 3),\notag\\&\epsilon_{ij}\epsilon_{ij+1} > 0~~(1\leq i \leq r, ~1\leq j \leq 2),\\&\epsilon_{ij}\epsilon_{i+1j} < 0~~(1\leq i \leq r-1, ~1\leq j \leq 3).\notag
\end{align}

\begin{figure}[tb]
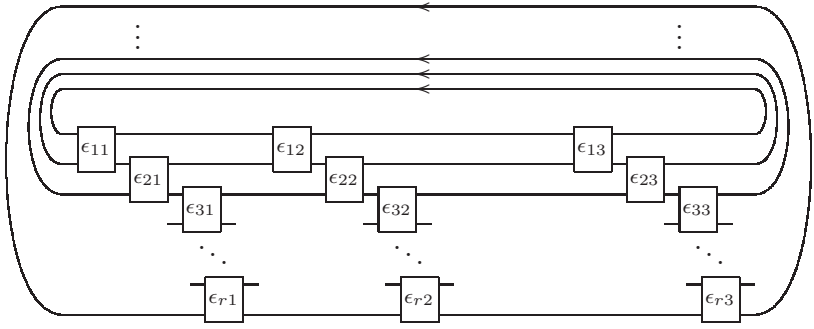

\vspace*{10pt} \centerline{ \xy
 (85,0);(90,0) **@{-}, (85,6);(90,6) **@{-},
 (85,0);(85,6) **@{-}, (90,0);(90,6) **@{-}, (87.5,3)*{_{\epsilon_{r2}}},
 (82,12);(87,12) **@{-}, (82,18);(87,18) **@{-},
 (82,12);(82,18) **@{-}, (87,12);(87,18) **@{-}, (84.5,15)*{_{\epsilon_{32}}},
 (75,16);(80,16) **@{-}, (75,22);(80,22) **@{-},
 (75,16);(75,22) **@{-}, (80,16);(80,22) **@{-}, (77.5,19)*{_{\epsilon_{22}}},
 (68,20);(73,20) **@{-}, (68,26);(73,26) **@{-},
 (68,20);(68,26) **@{-}, (73,20);(73,26) **@{-}, (70.5,23)*{_{\epsilon_{12}}},
 (66,1);(85,1) **@{-}, (90,1);(92,1) **@{-},
 (83,5);(85,5) **@{-}, (90,5);(92,5) **@{-},
 (80,13);(82,13) **@{-}, (87,13);(89,13) **@{-},
 (66,17);(75,17) **@{-}, (80,17);(82,17) **@{-}, (87,17);(92,17) **@{-},
 (66,21);(68,21) **@{-}, (73,21);(75,21) **@{-}, (80,21);(92,21) **@{-},
 (66,25);(68,25) **@{-}, (73,25);(92,25) **@{-}, (86,10)*{\ddots},
 (59,0);(64,0) **@{-}, (59,6);(64,6) **@{-},
 (59,0);(59,6) **@{-}, (64,0);(64,6) **@{-}, (61.5,3)*{_{\epsilon_{r1}}},
 (56,12);(61,12) **@{-}, (56,18);(61,18) **@{-},
 (56,12);(56,18) **@{-}, (61,12);(61,18) **@{-}, (58.5,15)*{_{\epsilon_{31}}},
 (49,16);(54,16) **@{-}, (49,22);(54,22) **@{-},
 (49,16);(49,22) **@{-}, (54,16);(54,22) **@{-}, (51.5,19)*{_{\epsilon_{21}}},
 (42,20);(47,20) **@{-}, (42,26);(47,26) **@{-},
 (42,20);(42,26) **@{-}, (47,20);(47,26) **@{-}, (44.5,23)*{_{\epsilon_{11}}},
 (40,1);(59,1) **@{-}, (64,1);(66,1) **@{-},
 (57,5);(59,5) **@{-}, (64,5);(66,5) **@{-},
 (54,13);(56,13) **@{-}, (61,13);(63,13) **@{-},
 (40,17);(49,17) **@{-}, (54,17);(56,17) **@{-}, (61,17);(66,17) **@{-},
 (40,21);(42,21) **@{-}, (47,21);(49,21) **@{-}, (54,21);(66,21) **@{-},
 (40,25);(42,25) **@{-}, (47,25);(66,25) **@{-}, (60,10)*{\ddots},
 (125,0);(130,0) **@{-}, (125,6);(130,6) **@{-},
 (125,0);(125,6) **@{-}, (130,0);(130,6) **@{-}, (127.5,3)*{_{\epsilon_{r3}}},
 (122,12);(127,12) **@{-}, (122,18);(127,18) **@{-},
 (122,12);(122,18) **@{-}, (127,12);(127,18) **@{-}, (124.5,15)*{_{\epsilon_{33}}},
 (115,16);(120,16) **@{-}, (115,22);(120,22) **@{-},
 (115,16);(115,22) **@{-}, (120,16);(120,22) **@{-}, (117.5,19)*{_{\epsilon_{23}}},
 (108,20);(113,20) **@{-}, (108,26);(113,26) **@{-},
 (108,20);(108,26) **@{-}, (113,20);(113,26) **@{-}, (110.5,23)*{_{\epsilon_{13}}},
 (106,1);(125,1) **@{-}, (130,1);(132,1) **@{-},
 (123,5);(125,5) **@{-}, (130,5);(132,5) **@{-},
 (120,13);(122,13) **@{-}, (127,13);(129,13) **@{-},
 (106,17);(115,17) **@{-}, (120,17);(122,17) **@{-}, (127,17);(132,17) **@{-},
 (106,21);(108,21) **@{-}, (113,21);(115,21) **@{-}, (120,21);(132,21) **@{-},
 (106,25);(108,25) **@{-}, (113,25);(132,25) **@{-}, (126,10)*{\ddots},
 (40,31);(132,31) **@{-}, (40,33);(132,33) **@{-}, (40,35);(132,35)
 **@{-}, (40,42);(132,42) **@{-}, (50,39)*{\vdots},
 (122,39)*{\vdots},
 (92,1);(108,1) **@{-},
 (92,17);(108,17) **@{-},
 (92,21);(108,21) **@{-},
 (92,25);(108,25) **@{-},
 (40,25);(40,31) **\crv{(38,25)&(38,31)}, (40,21);(40,33)
 **\crv{(36,21)&(36,33)}, (40,17);(40,35) **\crv{(34,17)&(34,35)},
 (40,1);(40,42) **\crv{(30,1)&(30,42)},
 (132,25);(132,31) **\crv{(134,25)&(134,31)}, (132,21);(132,33)
 **\crv{(136,21)&(136,33)}, (132,17);(132,35) **\crv{(138,17)&(138,35)},
 (132,1);(132,42) **\crv{(142,1)&(142,42)},
 (87,31);(88,31) **@{} ?<*\dir{<},
 (87,35);(88,35) **@{} ?<*\dir{<},
 (87,33);(88,33) **@{} ?<*\dir{<},
 (87,42);(88,42) **@{} ?<*\dir{<},
 \endxy
 }
\vspace*{10pt}\caption{Oriented closed braid $\hat\beta_r$}\label{fig5.1}
\end{figure}
Let $w(\beta_r)$ denote the exponent sum of $\beta_r$, i.e., 
$w(\beta_r)=
\displaystyle{\sum_{i=1}^r\sum_{j=1}^3\epsilon_{ij}}$.
Note that $w(\beta_r)$ is just the writhe of the oriented link $\hat\beta_r$, the closure of $\beta_r$.

\begin{rem}\label{prop-r-qtb}
Let $\hat\beta_r$ denote the closure of $\beta_r$ with the orientation as shown in Figure \ref{fig5.1}. Then
\begin{itemize}
\item [(1)] $\hat\beta_1$ is the right-handed trefoil knot $T(2,3)$ or the left-handed trefoil knot $T(2,3)^*$ according as $\epsilon_{11}=1$ or $\epsilon_{11}=-1$. And, $\hat\beta_2$ is the Borromean ring (see Figure \ref{fig-borromean}).
\item [(2)] $\hat\beta_r$ is a non-split alternating link without nugatory crossings and so is a minimal crossing diagram. Hence it follows that the minimal crossing number $c(\hat\beta_r)$ of $\hat\beta_r$ is given by \begin{equation}\label{cr-nbr-beta-r}
c(\hat\beta_r)=\sum_{i=1}^r\sum_{j=1}^3|\epsilon_{ij}|=3r.\end{equation}
\item [(3)] If $r=3k-1$ for some integer $k \geq 1$, then the closed braid $\hat\beta_r$ is an oriented link of three components, otherwise it is always an oriented knot.
\end{itemize}
\end{rem}

For a given oriented knot or link diagram $D$, let $W_2(D)$ denote the doubled link represented by the oriented link diagram obtained from $D$ as follows: Draw a parallel copy of $D$ pushed off of $D$ to the left according to the orientation of $D$, and then orient the parallel copy in the opposite direction. Notice that if $D$ is a knot diagram, then $W_2(D)=W_2(D,w(D))$.

Now we consider the doubled link $W_2(\hat\beta_r)$ of the closed quasitoric braid $\hat\beta_r$.
Notice that the link $W_2(\hat\beta_r)$ has no full-twists of two parallel strands and each crossing $\epsilon_{ij}$ of the closed braid diagram $\hat\beta_r$ as shown in Figure \ref{fig5.1} produces a tangle $T^{\epsilon_{ij}}_{ij}$ as shown in Figure \ref{fig5.2} in the standard diagram of $W_2(\hat\beta_r)$ associated with $\hat\beta_r$ according as $\epsilon_{ij}=1$ or $\epsilon_{ij}=-1$. The standard diagram of $W_2(\hat\beta_r)$ is equivalent to the diagram shown in Figure \ref{W2-beta-r} in which each rectangle labeled $T^{\epsilon_{ij}}_{ij}$ corresponds to the crossing $\epsilon_{ij}$ of $\hat\beta_r$.

\begin{figure}[tb]
\vspace*{10pt}
\centerline{\xy
(0,0);(-18,20) **@{-} ?>*\dir{>}, (-6,0);(-24,20) **@{-} ?<*\dir{<},
(0,20);(-8,11) **@{-} ?<*\dir{<}, (-13,5.5);(-18,0) **@{-},
(-6,20);(-11,14.5) **@{-},  (-16,9);(-24,0) **@{-}  ?>*\dir{>},
(-12.8,12.4);(-14.3,10.8) **@{-},
(-11.3,7.3);(-9.5,9.3) **@{-},
(-11.5,-5) *{\epsilon_{ij}=1},
\endxy
\qquad\qquad\qquad
\xy
(0,0);(18,20) **@{-} ?<*\dir{<}, (6,0);(24,20) **@{-} ?>*\dir{>},
(0,20);(8,11) **@{-}, (13,5.5);(18,0) **@{-} ?>*\dir{>},
(6,20);(11,14.5) **@{-} ?<*\dir{<},  (16,9);(24,0) **@{-},
(12.8,12.4);(14.3,10.8) **@{-},
(11.3,7.3);(9.5,9.3) **@{-},
(11.5,-5) *{\epsilon_{ij}=-1},
\endxy}
\vspace*{10pt}\caption{$T^{\epsilon_{ij}}_{ij}$}
\label{fig5.2}
\end{figure}
\begin{figure}[ht]
\begin{center}
\resizebox{0.80\textwidth}{!}{%
  \includegraphics{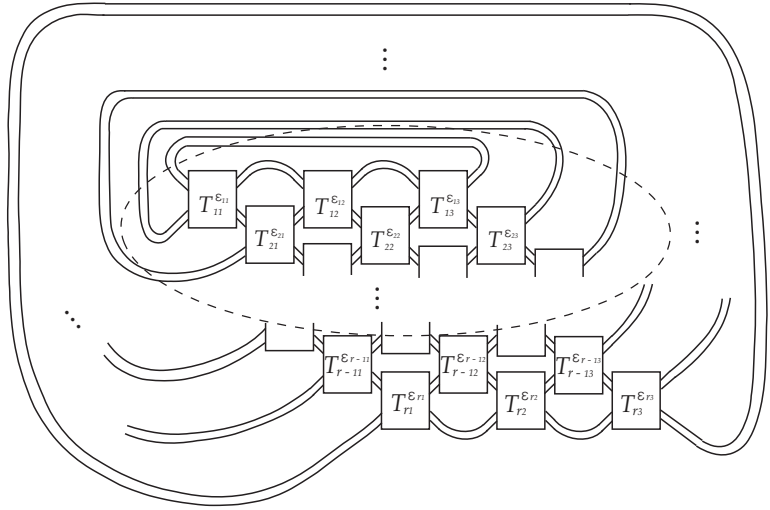}}
\caption{$W_2(\hat\beta_r)$}\label{W2-beta-r}
\end{center}
\end{figure}

In order to state the main result, we first make some notations. For our convenience, we represent the standard diagram $W_2(\hat\beta_r)$ in Figure \ref{W2-beta-r} the $r\times 3$ matrix $Q_r$ with the entries $T^{\epsilon_{ij}}_{ij}$:
$$Q_r=\left(
\begin{array}{ccc}
T^{\epsilon_{11}}_{11}&T^{\epsilon_{12}}_{12}& T^{\epsilon_{13}}_{13}\\
T^{\epsilon_{21}}_{21}&T^{\epsilon_{22}}_{22}& T^{\epsilon_{23}}_{23}\\
\vdots & \vdots & \vdots\\
T^{\epsilon_{r-11}}_{r-11}&T^{\epsilon_{r-12}}_{r-12}& T^{\epsilon_{r-13}}_{r-13}\\
T^{\epsilon_{r1}}_{r1}&T^{\epsilon_{r2}}_{r2}& T^{\epsilon_{r3}}_{r3}
\end{array}
\right).
$$
In the case that $\epsilon_{r1}=1$ (and hence $\epsilon_{r2}=\epsilon_{r3}=1$), we will denote the diagram $W_2(\hat\beta_r)$ simply by $D_r$ and let $N_r$ denote the integer given by
\begin{equation}\label{defn-N_r}
N_r=c(D_r)-s(D_r)+1=6r-1~(r \geq 1).
\end{equation}
In what follows, instead of the diagram $D_r$ illustrated in Figure \ref{W2-beta-r}, we use a shortcut diagram shown in Figure \ref{W2-beta-r-1} for $D_r$ for the sake of simplicity.
\begin{figure}[ht]
\begin{center}
\resizebox{0.65\textwidth}{!}{%
  \includegraphics{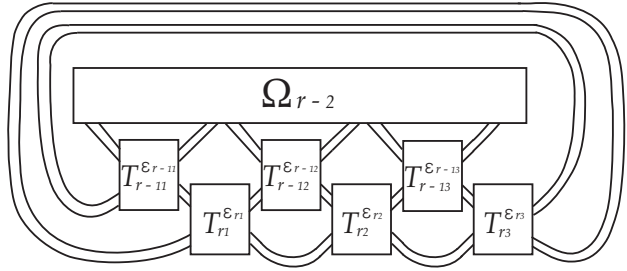}}
\caption{$D_r=W_2(\hat\beta_r)$ with $\epsilon_{r1}=1$}\label{W2-beta-r-1}
\end{center}
\end{figure}

\begin{examp}\label{examp1-br}
 Let $\beta_2$ be the quasi-toric braid of type $(3,3)$, i.e.,
\begin{equation*}
\beta_2=(\sigma_2\sigma_{1}^{-1})
(\sigma_2\sigma_1^{-1})
(\sigma_2\sigma_1^{-1}).
\end{equation*}
Then the closed braid $\hat\beta_2$ is the Borromean ring (see Figure \ref{fig-borromean}) and the $2$-parallel link $D_2=W_2(\hat\beta_2)$ is represented by $2\times 3$ matrix $Q_2$:
$$Q_{2}=\left(
\begin{array}{ccc}
T^{1}_{11}&T^{1}_{12}& T^{1}_{13}\\
T^{-1}_{21}&T^{-1}_{22}& T^{-1}_{23}\\
\end{array}
\right).
$$
\begin{figure}[ht]
\begin{center}
\resizebox{0.6\textwidth}{!}{%
  \includegraphics{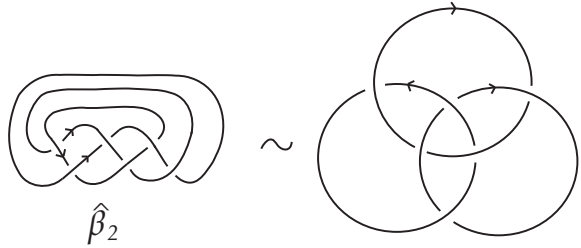}}
\caption{Borromean ring}\label{fig-borromean}
\end{center}
\end{figure}
By a direct computation, we obtain
\begin{align*}
P_{W_2(\hat\beta_2)}(v,z)
&=z^{-5}(-v^{5}+5v^{3}-10v+10v^{-1}-5v^{-3}+v^{-5})\\
&+z^{-1}(8v^{5}-40v^{3}+80v-80v^{-1}+40v^{-3}-8v^{-5})\\
&+z(12v^{5}-68v^{3}+144v-144v^{-1}+68v^{-3}+12v^{-5})\\
&+z^{3}(2v^{5}-22v^{3}+56v-56v^{-1}+22v^{-3}-2v^{-5})\\
&+z^{5}(-v^{7}-5v^{5}+13v^{3}-7v+7v^{-1}-13v^{-3}
+5v^{-5}+v^{-7})\\
&+z^{7}(-2v^{5}+8v^{3}+10v-10v^{-1}-8v^{-3}+2v^{-5})\\
&+z^{9}(v^{3}+11v-11v^{-1}-v^{-3})+z^{11}(2v-2v^{-1}).
\end{align*}
Hence the maximal $z$-degree of the HOMFLYPT polynomial $P_{W_2(\hat\beta_2)}(v,z)$ of the doubled link $W_2(\hat\beta_2)$ is given
by $$\max\deg_z P_{W_2(\hat\beta_2)}(v,z)=11=2\cdot 6-1=2c(\hat\beta_2)-1.$$
On the other hand, let $\hat\beta_2^*$ denote the mirror image of $\hat\beta_2$. Then we also have
\begin{align*}
\max\deg_z P_{W_2(\hat\beta_2^*)}(v,z)&=\max\deg_z P_{W_2(\hat\beta_2)}(v^{-1},z)\\
&=11=2\cdot 6-1=2c(\hat\beta_2^*)-1.
\end{align*}
\end{examp}

Now we construct a partial skein tree as shown in Figure \ref{resol-tree-1} for the tangle $T^{1}_{r3}$ in $D_r$ of the left hand side of Figure \ref{fig5.2}. We label all nodes in the skein tree with $A, B, E_1, F_1, F_2, F_3, F_4,$ and $G$ as shown in Figure \ref{resol-tree-1}. Now let $D_{r}^i (1\leq i \leq 8)$ denote the link diagram represented by the $r\times 3$ matrix:
$$D_{r}^i=\left(
\begin{array}{ccc}
T^{\epsilon_{11}}_{11}&T^{\epsilon_{12}}_{12}& T^{\epsilon_{13}}_{13}\\
T^{\epsilon_{21}}_{21}&T^{\epsilon_{22}}_{22}& T^{\epsilon_{23}}_{23}\\
\vdots & \vdots & \vdots\\
T^{\epsilon_{r-11}}_{r-11}&T^{\epsilon_{r-12}}_{r-12}& T^{\epsilon_{r-13}}_{r-13}\\
T^{1}_{r1}&T^{1}_{r2}& T_i
\end{array}
\right).
$$
That is, $D_{r}^i$ is the link diagram  obtained from the link diagram $D_r$ by replacing the tangle $T^{1}_{r3}$ with the tangle $T_i$, where $$T_1=A, T_2=B, T_3=E_1, T_4=F_1, T_5=F_2, T_6=F_3, T_7=F_4, T_8=G.$$ Hence two diagrams $D_r$ and $D_{r}^i$ are identical except the only one tangle corresponding to the (r,3)-entry of the matrix notations. In these terminologies, we have the following lemma \ref{main-lem-1} that will play an essential role in the proof of Theorem \ref{main-thm-1} below.

\begin{figure}[ht]
\begin{center}
\resizebox{0.32\textwidth}{!}{%
  \includegraphics{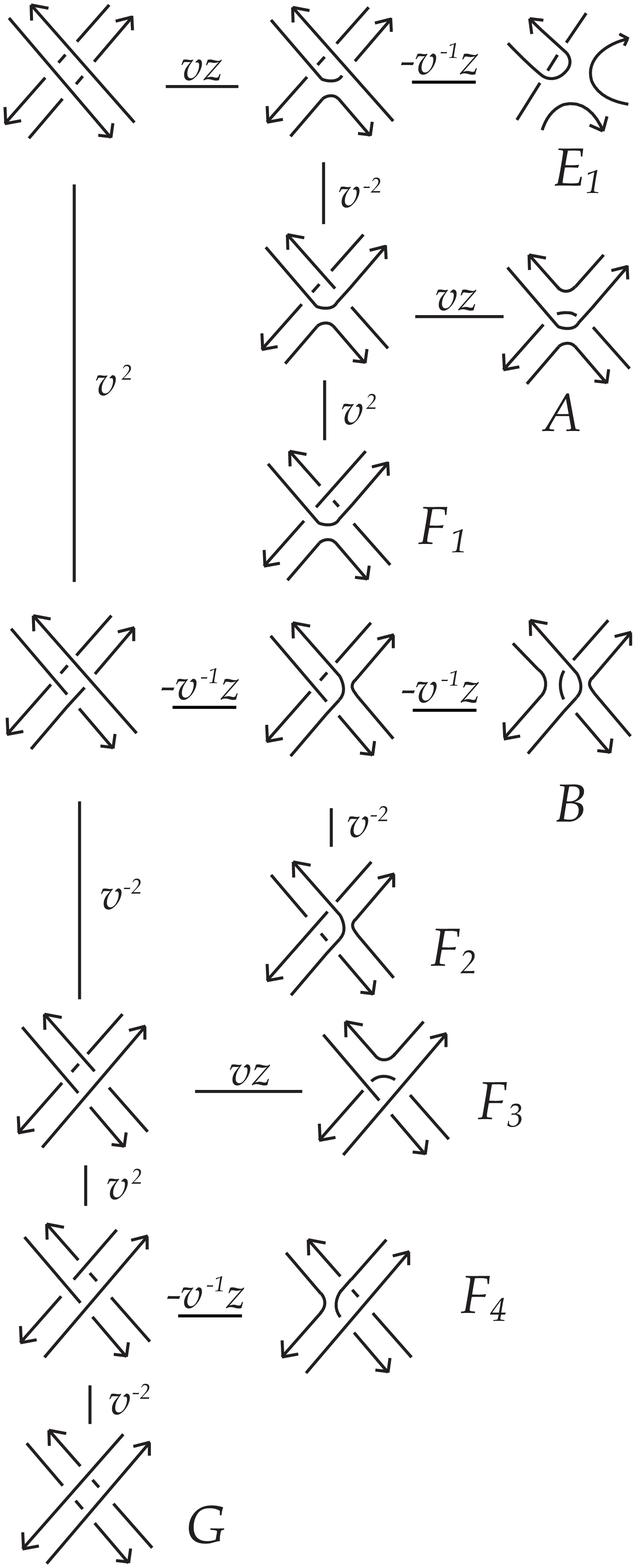}}
\caption{A partial skein tree for $T^{1}_{r3}$}\label{resol-tree-1}
\end{center}
\end{figure}

\begin{lem}\label{main-lem-1} \
\begin{itemize}
\item [(1)]
$\max\deg_z P_{D_{r}^4}(v,z)\leq N_r-3~\text{if}~ r \geq 3.$
\item [(2)] $\max\deg_z P_{D_{r}^5}(v,z) \leq N_r-3~\text{if}~ r\geq 3.
$
\item [(3)] $\max\deg_z P_{D_{r}^6}(v,z) \leq N_r-3~\text{if}~ r\geq 3.
$
\item [(4)] $\max\deg_z P_{D_{r}^7}(v,z) \leq N_r-3~\text{if}~ r\geq 3.
$
\item [(5)] $\max\deg_z P_{D_{r}^8}(v,z) \leq N_r-4~\text{if}~ r\geq 3.
$
\end{itemize}
\end{lem}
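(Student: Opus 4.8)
The plan is to bound each $\max\deg_z P_{D_r^i}(v,z)$ directly by Morton's inequality (Theorem~\ref{thm-Morton-ineq}), which gives
\[
\max\deg_z P_{D_r^i}(v,z)\ \le\ c(D_r^i)-s(D_r^i)+1 .
\]
So for each of the five diagrams it suffices to determine the crossing number $c(D_r^i)$ and the Seifert-circle number $s(D_r^i)$ and to check that $c(D_r^i)-s(D_r^i)+1$ does not exceed the stated value. First I would read the tangles $F_1,F_2,F_3,F_4,G$ (that is, $T_4,\dots,T_8$) off the partial skein tree of Figure~\ref{resol-tree-1}; each is obtained from the four-crossing tangle $T^1_{r3}$ by a sequence of crossing-switches and oriented smoothings, and so each carries a definite crossing number $c(T_i)\le 4$ and connects its four endpoints in a definite oriented pattern.

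The crossing count is then immediate. Since $D_r^i$ agrees with $D_r$ outside the $(r,3)$-slot, and since each crossing of $\hat\beta_r$ becomes four crossings in the anti-parallel double while $c(\hat\beta_r)=3r$ by Remark~\ref{prop-r-qtb}, we have $c(D_r)=12r$ and hence $c(D_r^i)=12r-4+c(T_i)$. Feeding this into Morton's inequality collapses parts (1)--(4) to the single estimate $s(D_r^i)\ge c(T_i)+6r+1$ and part (5) to $s(D_r^8)\ge c(G)+6r+2$. Because $N_r=c(D_r)-s(D_r)+1=6r-1$ forces $s(D_r)=6r+2$, these read cleanly as
\[
s(D_r^i)-s(D_r)\ \ge\ c(T_i)-1\quad(i=4,5,6,7),\qquad s(D_r^8)-s(D_r)\ \ge\ c(G).
\]

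To verify them I would trace the Seifert circles through the modified slot. Passing from $T^1_{r3}$ to $T_i$ changes the Seifert state only locally: the $c(T_i)$ crossings of $T_i$ are oriented-smoothed, and the resulting arcs are spliced to the four neighbouring strands coming from $T^1_{r2}$ on the left and from the right-hand closure arcs of Figure~\ref{W2-beta-r-1}. For each tangle one compares its local Seifert-smoothed pattern with that of $T^1_{r3}$, records the number of closed circles that appear and how the four boundary strands are re-paired, and then reads off the net change in the global count by following those strands once around the closure.

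The hard part will be precisely this Seifert bookkeeping, because the smoothings are oriented and can either merge two circles or split one, and which of the two occurs is a \emph{non-local} matter: it depends on whether the affected strands already lie on a common Seifert circle of $D_r$, a fact governed by the full connectivity of the anti-parallel double around the braid axis. I expect the extra circles created by the more heavily resolved tangles to be exactly what produces the required counts, with the additional smoothing in $G$ yielding one more Seifert circle than in $F_1,\dots,F_4$ and thereby giving the sharper bound $N_r-4$ of part (5). The hypothesis $r\ge 3$ should enter here to ensure that the strands being traced remain distinct all the way around the closure, so that the newly created circles genuinely add to the total rather than coalescing with boundary circles as can happen in the small cases $r=1,2$.
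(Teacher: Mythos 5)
Your reduction fails at its central step: it needs $s(D_r^i)-s(D_r)\ \ge\ c(T_i)-1$ for $i=4,5,6,7$ and $s(D_r^8)-s(D_r)\ \ge\ c(G)$, but in fact $s(D_r^i)=s(D_r)$ for \emph{every} $i$, so both inequalities are false. The reason is that each $T_i$ is a node of a skein tree, hence is obtained from $T^1_{r3}$ by crossing switches and oriented smoothings. A crossing switch does not change the underlying projection or orientations, so it leaves the Seifert state untouched; and an oriented smoothing performs at that crossing exactly the resolution that Seifert's algorithm would perform anyway, so the Seifert circles of the smoothed diagram are literally the same circles as before. No ``extra circles'' are ever created; your picture of smoothings that ``merge two circles or split one'' belongs to unoriented (Kauffman-type) resolutions, not to the oriented smoothings of the HOMFLYPT relation. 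Running the numbers: $F_1,\dots,F_4$ carry exactly one smoothing each (visible from the factor $z^1$ they carry in (\ref{eqn1-pf-main-thm-1})) and $G$ carries none (factor $z^0$, two switches), so Morton's inequality applied to the diagrams as you set them up gives only $\max\deg_z P_{D_r^i}\le N_r-1$ for $i=4,\dots,7$ and $\le N_r$ for $i=8$, missing the required bounds $N_r-3$ and $N_r-4$ by $2$ and $4$ respectively.

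What closes this gap in the paper is emphatically not local bookkeeping at the $(r,3)$-slot. For parts (2)--(5) the paper first simplifies the diagrams by isotopies that reach \emph{outside} the slot: after a switch or smoothing, strands of the double retract and cancel crossings of neighbouring tangles via Reidemeister II moves, and these moves genuinely lower $c-s$ (for $D_r^8$ the resulting diagram has $c(D_r)-8$ crossings and $s(D_r)-4$ Seifert circles, whence $N_r-4$; for $D_r^5$ and $D_r^7$ an extra skein move or isotopy is needed before Morton applies). Part (1) resists even this: the paper expands $D_r^4$ through several nested partial skein trees (Figures \ref{res-tree-dr41}, \ref{res-tree-dr42}, \ref{res-tree-dr43}, \ref{res-tree-dr45}), bounds most leaves by Morton after such isotopies, and handles the final leaf $a_{16}$ by a global argument that pushes crossings once around the closed braid using the decomposition (\ref{cr-sliding}), with a case division on $r$ modulo $3$, followed by removal of full twists as in the proof of Proposition \ref{prop3-cr-nbr-cg-wd}(2). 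So the missing content in your proposal is not the ``hard Seifert bookkeeping'' you defer; it is that Morton's inequality on the diagrams you propose to use is provably too weak, and substantial skein-theoretic and isotopy input of the kind above is unavoidable.
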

The proof of this lemma \ref{main-lem-1} will be given in the final section \ref{sect-pf-lemma}. Now, let us state our main theorem of this section.

\begin{thm}\label{main-thm-1}
Let $\beta_r (r \geq 1)$ be a quasitoric braid of type $(r+1,3)$ in (\ref{braid-beta}) and let $W_2(\hat\beta_r)$ be the doubled link of $\hat\beta_r$. Then
\begin{equation}\label{main-formula-1}
\max\deg_z P_{W_2(\hat\beta_r)}(v,z)=2c(\hat\beta_r)-1=6r-1.
\end{equation}
\end{thm}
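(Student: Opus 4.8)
The plan is to establish the two inequalities $\max\deg_z P_{W_2(\hat\beta_r)}(v,z)\le 6r-1$ and $\max\deg_z P_{W_2(\hat\beta_r)}(v,z)\ge 6r-1$ separately. First I would normalise the diagram. The constraints (\ref{braid-beta-1}) force the three signs in each row of $Q_r$ to coincide and force consecutive rows to have opposite signs, so up to relabelling there are only two admissible sign patterns, according as $\epsilon_{r1}=1$ or $\epsilon_{r1}=-1$, and the corresponding closed braids are mirror images of one another. Since passing to the mirror image replaces $P_L(v,z)$ by $P_L(v^{-1},z)$ and hence preserves the $z$-degree (exactly as used for $\hat\beta_2^*$ in Example \ref{examp1-br}), it suffices to treat the case $\epsilon_{r1}=1$, i.e.\ the diagram $D_r=W_2(\hat\beta_r)$. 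By Remark \ref{prop-r-qtb}(2), $c(\hat\beta_r)=3r$, so the target value is indeed $2c(\hat\beta_r)-1=6r-1$.

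For the upper bound I would apply Morton's inequality (Theorem \ref{thm-Morton-ineq}) directly to $D_r$, which gives $\max\deg_z P_{W_2(\hat\beta_r)}(v,z)\le c(D_r)-s(D_r)+1=N_r=6r-1$ by (\ref{defn-N_r}). I would stress that this is where the real work begins: $D_r$ is the $2$-cable of an alternating diagram and is itself \emph{not} alternating, so the equality case of Morton's inequality is unavailable and the reverse inequality must be obtained by hand.

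For the lower bound I would argue by induction on $r$. The base cases are $r=1$, where $\hat\beta_1$ is the trefoil and Proposition \ref{BJprop3-cr-nbr-cg-wd} together with Proposition \ref{prop3-cr-nbr-cg-wd}(1) give $\max\deg_z P_{W_2(\hat\beta_1)}=2\cdot 3-1=5$, and $r=2$, where $\hat\beta_2$ is the Borromean ring and the explicit polynomial in Example \ref{examp1-br} gives $\max\deg_z=11$. For the inductive step $(r\ge 3)$ I would expand $P_{D_r}$ along the partial skein tree of Figure \ref{resol-tree-1}, which resolves the four crossings of the bottom-right tangle $T^1_{r3}$. Unwinding the tree yields a linear relation $P_{D_r^1}=\sum_{i=2}^{8}c_i\,P_{D_r^i}$, in which each $c_i$ is a monomial in $v^{\pm1},z^{\pm1}$, the root $D_r^1$ is $D_r$ itself, the leaves $D_r^2=B$ and $D_r^3=E_1$ are the dominant terms, and $D_r^4,\dots,D_r^8$ (the nodes $F_1,\dots,F_4,G$) are error terms. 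Lemma \ref{main-lem-1} bounds the latter by $N_r-3$ for $4\le i\le 7$ and by $N_r-4$ for $i=8$, so that, once the $z$-powers carried by the $c_i$ are taken into account, every error contribution stays strictly below $6r-1$. The dominant nodes $B$ and $E_1$ are the two diagrams in which $T^1_{r3}$ has been opened to a trivial tangle; I would next resolve the remaining tangles $T^1_{r1},T^1_{r2}$ of the bottom row in the same fashion, reducing $B$ and $E_1$ to the one-smaller doubled link $D_{r-1}$, so that the induction hypothesis $\max\deg_z P_{D_{r-1}}=6(r-1)-1$ applies and the accumulated $z$-factors account for the increment $N_r-N_{r-1}=6$.

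The step I expect to be the main obstacle is the no-cancellation analysis at the top $z$-degree. The tool for this is the sharp form of the degree estimates recorded just after (\ref{eq-skein-pos-2}), namely that $M(L_\pm)=\max\{M(L_\mp),M(L_0)+1\}$ with equality whenever the two competing degrees are distinct. Propagating this through the skein tree, I would have to verify (i) that the dominant contributions coming from $B$ and $E_1$ do not cancel each other at degree $6r-1$, and (ii) that they strictly dominate all the error terms controlled by Lemma \ref{main-lem-1}; together these force the coefficient of $z^{6r-1}$ in $P_{D_r}$ to be nonzero, giving $\max\deg_z P_{D_r}=6r-1$ and closing the induction. The genuinely technical part is the uniform control of the five error diagrams $D_r^4,\dots,D_r^8$, which is exactly the content of Lemma \ref{main-lem-1}, whose proof is deferred to Section \ref{sect-pf-lemma}. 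Combining the two bounds then yields $\max\deg_z P_{W_2(\hat\beta_r)}(v,z)=6r-1=2c(\hat\beta_r)-1$, as claimed.
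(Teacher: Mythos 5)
Your skeleton --- mirror reduction to the diagram $D_r$, Morton's inequality for the upper bound $N_r=6r-1$, induction on $r$ via a skein expansion at the tangle $T^1_{r3}$, and Lemma \ref{main-lem-1} to control error diagrams --- is the same as the paper's. But you have misread the structure of that expansion, and the step that actually produces the value $6r-1$ is handled by a different (and for your version, missing) mechanism. In the paper's identity (\ref{eqn1-pf-main-thm-1}) the root of the skein tree is $D_r$ itself, which is \emph{not} one of the $D_r^i$: all eight diagrams $D_r^1,\dots,D_r^8$ (tangles $A,B,E_1,F_1,\dots,F_4,G$) appear on the right-hand side. The dominant terms are $D_r^1=A$ and $D_r^2=B$, each carrying a factor $z^2$, while $E_1=D_r^3$ is \emph{negligible} --- its degree is too low, by Morton's inequality, to interfere. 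Your relation $P_{D_r^1}=\sum_{i=2}^{8}c_i P_{D_r^i}$ is therefore missing the $A$-term altogether, and the pair you designate as dominant ($B$ and $E_1$) is wrong: pursuing $E_1$ you would find no contribution at level $N_r$, while the genuine top-degree contribution of $A$ would go unaccounted for.

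The more serious gap is in how the dominant terms are tied to $D_{r-1}$. The paper does not resolve the remaining tangles $T^1_{r1},T^1_{r2}$ by further skein moves, as you propose. Instead it shows by isotopy (Figures \ref{fig-D-1-r} and \ref{fig-D-2-r}) that $D_r^1$ and $D_r^2$ are the standard doubled diagrams $W_2(D)$ and $W_2(D'')$ of links obtained from $\hat\beta_{r-1}$ by replacing one, respectively two, crossings with full twists, and then applies Proposition \ref{BJprop4-cr-nbr-cg-wd} twice to each, obtaining the exact equalities $M(D_r^1)=M(D_r^2)=M(D_{r-1})+4$; that proposition is precisely the device that packages both the degree increment of $+2$ per full twist and the non-cancellation needed to make it an equality. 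Under your plan of continued skein resolution, each of $T^1_{r1}$ and $T^1_{r2}$ would spawn its own family of error diagrams, none of which is covered by Lemma \ref{main-lem-1} (which concerns only $D_r^4,\dots,D_r^8$), and you would also have to rule out cancellation at top degree among a much larger collection of terms. So your assertion that Lemma \ref{main-lem-1} is ``exactly'' the technical content required is false for your route: you would either need to prove new Morton-type bounds for all the secondary error diagrams and a correspondingly stronger no-cancellation statement, or replace the continued resolution by the isotopy-plus-Proposition-\ref{BJprop4-cr-nbr-cg-wd} argument, which is what the paper does.
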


\begin{proof} We prove the assertion (\ref{main-formula-1}) by induction on $r$. If $r=1$, then $\beta_1=\sigma_1^3$ or $\sigma_1^{-3}$, and so $\hat\beta_1$ is the right-handed trefoil knot or the left-handed trefoil knot. In either cases, it is immediate from direct calculations that $$\max\deg_z P_{W_2(\hat\beta_1)}(v,z)=\max\deg_z P_{D_1}(v,z)=5=2\cdot 3-1=2c(\hat\beta_1)-1.$$
(In the case that $r=2$, it follows from Example \ref{examp1-br} that the assertion (\ref{main-formula-1}) also holds.)

Now we assume that $r \geq 3$ and the assertion (\ref{main-formula-1}) holds for every integers $ \leq r-1$. We consider two cases separately.

\smallskip

{\bf Case I.} $\epsilon_{r3}=1$. First we observe from (\ref{braid-beta-1}) that $\epsilon_{r1}=\epsilon_{r2}=1$.
In this case, we have $W_2(\hat\beta_r)=D_r$ by the notational convention above.

\smallskip

{\bf Claim.} $\max\deg_z P_{D_r}(v,z)=2c(\hat\beta_r)-1=6r-1.$

\smallskip

{\bf Proof of Claim.} From the skein relation for the HOMFLYPT polynomial and a partial skein tree for $T^1_{r3}$ in Figure \ref{resol-tree-1}, we obtain
\begin{align}\label{eqn1-pf-main-thm-1}
P_{D_r}(v,z) &= (P_{D_{r}^1}(v,z) + P_{D_{r}^2}(v,z) - P_{D_{r}^3}(v,z))z^2 \notag\\&+ (vP_{D_{r}^4}(v,z) - v^{-1} P_{D_{r}^5}(v,z) + vP_{D_{r}^6}(v,z)-vP_{D_{r}^7}(v,z))z\notag\\& + P_{D_{r}^8}(v,z).
\end{align}
We observe that the link diagram $D^1_r$ is isotopic to the link diagram (a) of Figure \ref{fig-D-1-r}, which is isotopic to the diagram (b) in Figure \ref{fig-D-1-r}.
\begin{figure}[ht]
\begin{center}
\resizebox{0.90\textwidth}{!}{%
  \includegraphics{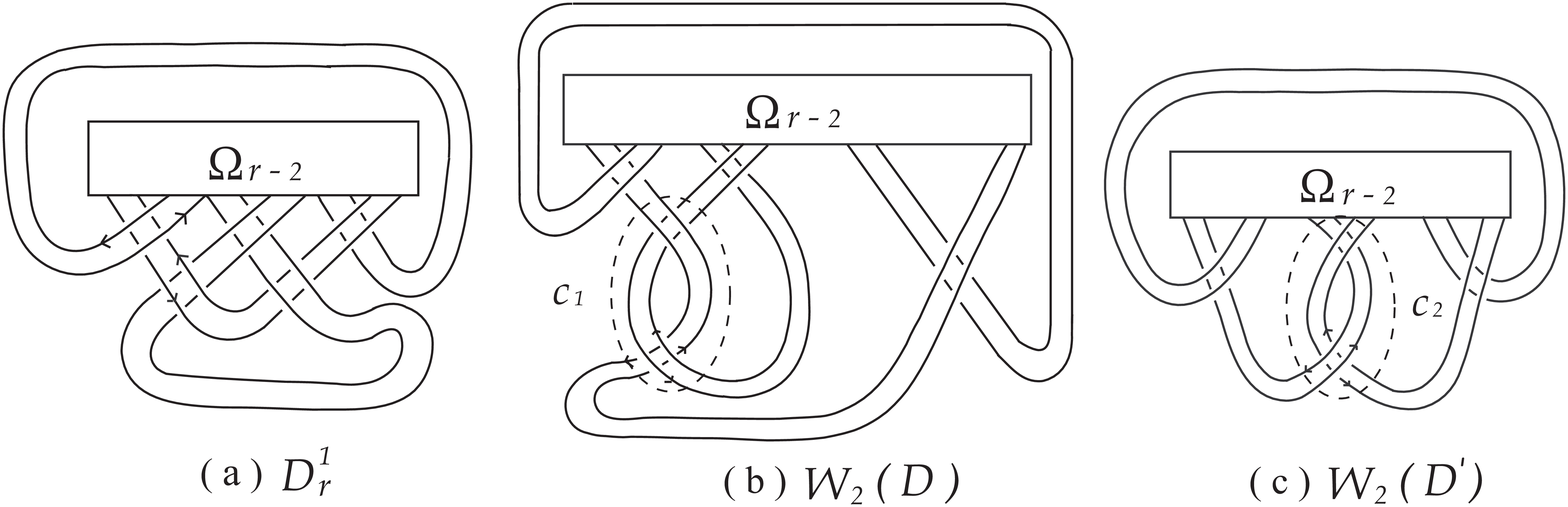}}
\caption{$D^{1}_{r}$}\label{fig-D-1-r}
\end{center}
\end{figure}

Now let $L'$ be an oriented link having diagram $D'$ obtained from the standard closed braid diagram of a non-split alternating link $\hat\beta_{r-1}$ by replacing the crossing $\sigma_1^{\epsilon_{r-12}}$ in $\hat\beta_{r-1}$ with a full twist (so that $c(D')=c(\hat\beta_{r-1})+1$) as illustrated in (a) and (b) of Figure \ref{fig-D-pf-main-thm-1}. By induction hypothesis, we have
\begin{equation}\label{main-formula-1-r-1}
\max\deg_z P_{W_2(\hat\beta_{r-1})}(v,z)=2c(\hat\beta_{r-1})-1=6(r-1)-1~ (r \geq 2).
\end{equation}
By Proposition \ref{BJprop4-cr-nbr-cg-wd}, we then obtain
\begin{align}
\max\deg_z P_{W_2(L')}(v,z)&=2c(D')-1\notag\\&=\max\deg_z P_{W_2(\hat\beta_{r-1})}(v,z)+2.\label{eqn2-pf-main-thm-1}
\end{align}
It is obvious that $L'$ is a non-split alternating link satisfying $c(L')=c(D')$ and the doubled link $W_2(L')$ has a diagram $W_2(D')$ in (c) of Figure \ref{fig-D-1-r}. Now let $L$ be an oriented link having diagram $D$ obtained from $D'$ by replacing a crossing in $D'$ with a full twist as illustrated in (c), (e) and (f) of Figure \ref{fig-D-pf-main-thm-1} so that $c(D)=c(D')+1$. Then the doubled link $W_2(L)$ has a diagram $W_2(D)$ in (b) of Figure \ref{fig-D-1-r}.
\begin{figure}[ht]
\begin{center}
\resizebox{0.70\textwidth}{!}{%
  \includegraphics{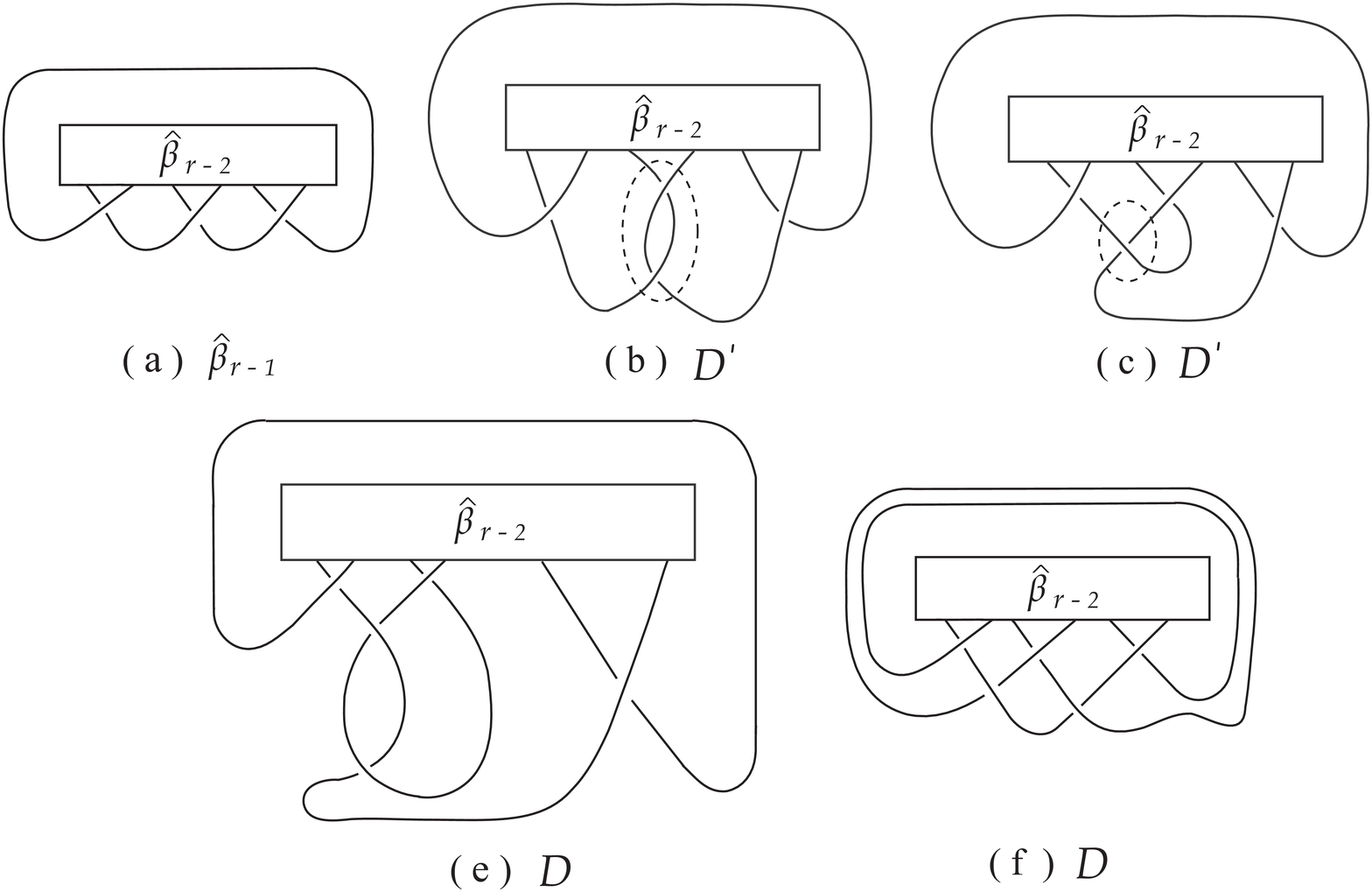}}
\caption{}\label{fig-D-pf-main-thm-1}
\end{center}
\end{figure}
By Proposition \ref{BJprop4-cr-nbr-cg-wd} again, we have
\begin{align}
\max\deg_z P_{W_2(L)}(v,z)&=2c(D)-1\notag\\
&=\max\deg_z P_{W_2(L')}(v,z)+2.\label{eqn3-pf-main-thm-1}
\end{align}
Then we obtain from (\ref{eqn2-pf-main-thm-1}) and (\ref{eqn3-pf-main-thm-1}) that
\begin{align}
\max\deg_z P_{D^1_r}(v,z) &=\max\deg_z P_{W_2(L)}(v,z)\notag\\
&=\max\deg_z P_{W_2(\hat\beta_{r-1})}(v,z)+4\notag\\
&=\max\deg_z P_{D_{r-1}}(v,z)+4.\label{eqn4-pf-main-thm-1}
\end{align}
Similarly, we observe that the link diagram $D^2_r$ is isotopic to the link diagram in the left side of Figure \ref{fig-D-2-r}, which is isotopic to the diagram in the right side of Figure \ref{fig-D-2-r}.

\begin{figure}[ht]
\begin{center}
\resizebox{0.80\textwidth}{!}{%
  \includegraphics{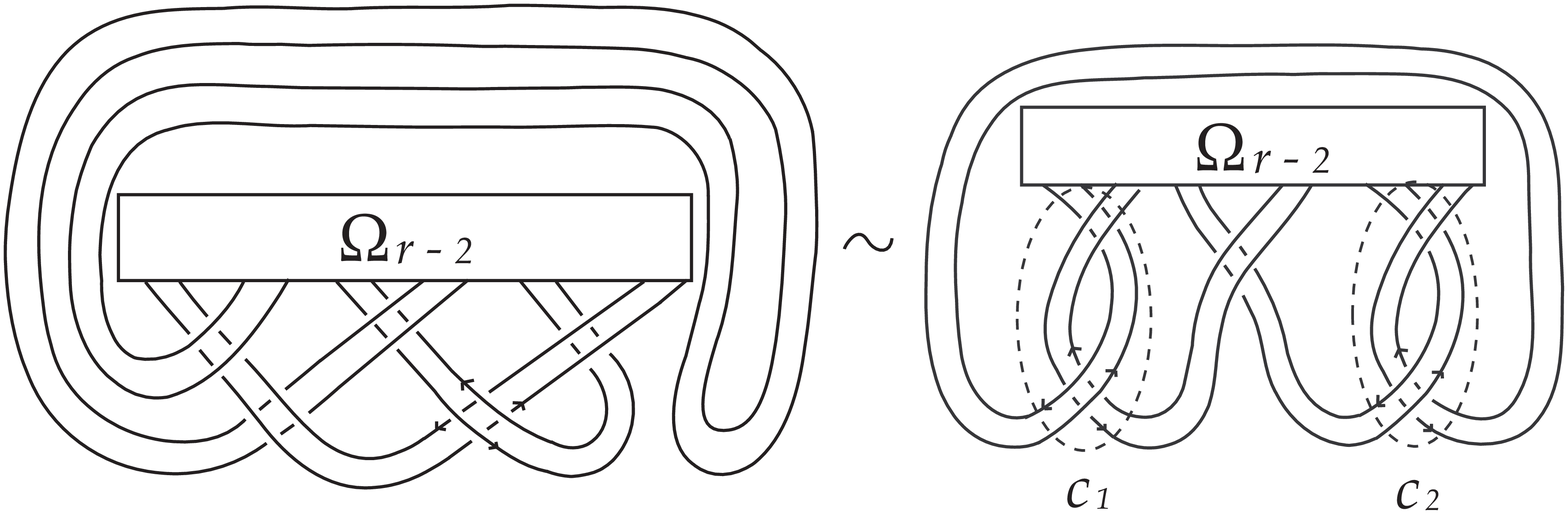}}
\caption{$D^{2}_{r}$}\label{fig-D-2-r}
\end{center}
\end{figure}

Let $L''$ be an oriented link having diagram $D''$ obtained from the standard closed braid diagram of a non-split alternating link $\hat\beta_{r-1}$ by replacing two crossings $\sigma_1^{\epsilon_{r-11}}$ and $\sigma_1^{\epsilon_{r-13}}$ in $\hat\beta_{r-1}$ with full twists, respectively, as illustrated in Figure \ref{fig-D''-pf-main-thm-1}.
 \begin{figure}[ht]
\begin{center}
\resizebox{0.60\textwidth}{!}{%
  \includegraphics{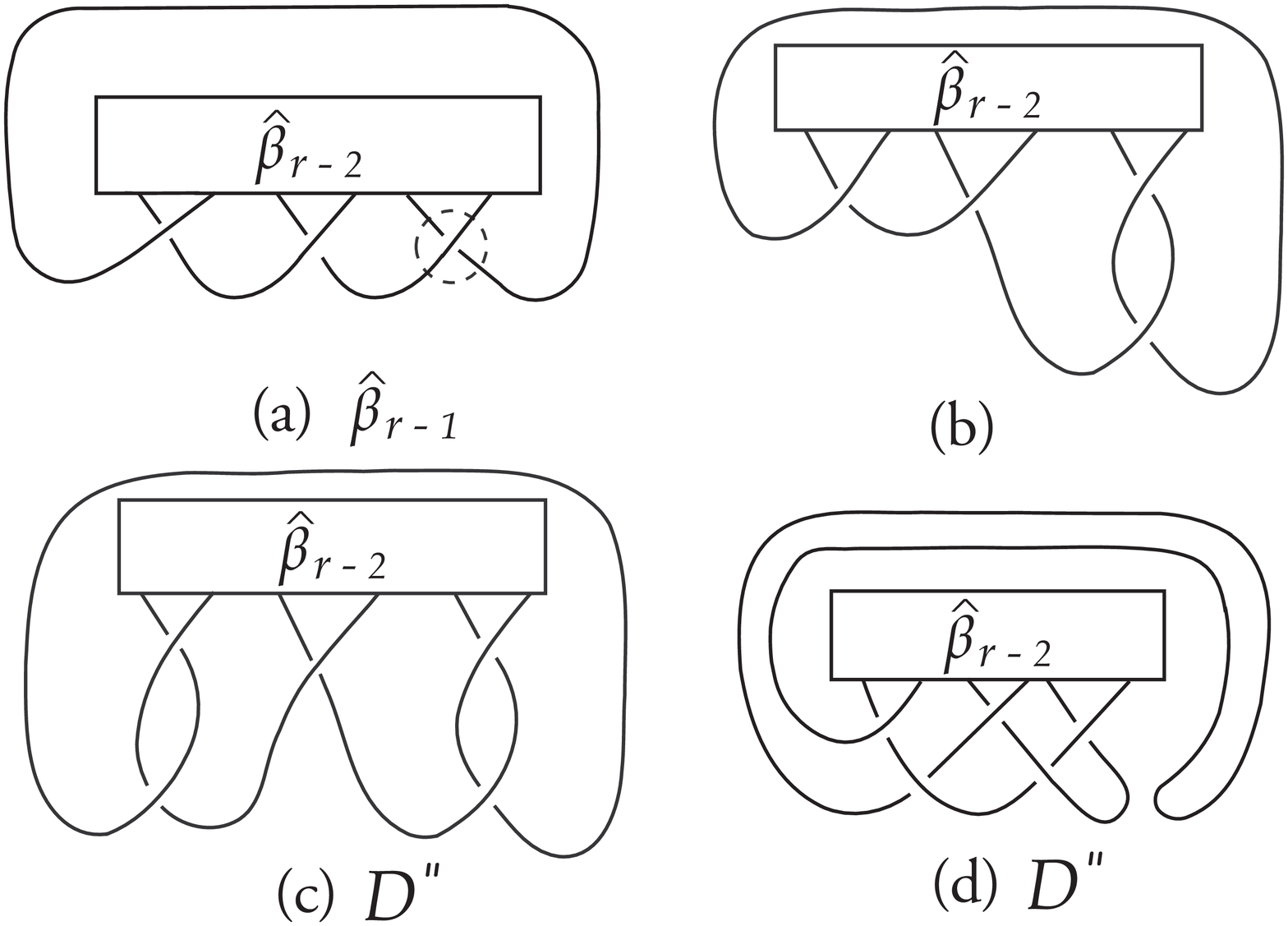}}
\caption{$D''$}\label{fig-D''-pf-main-thm-1}
\end{center}
\end{figure}
So $c(D'')=c(\hat\beta_{r-1})+2$. It is obvious that the doubled link $W_2(L'')$ has a diagram in the right side of Figure \ref{fig-D-2-r}. By induction hypothesis and Proposition \ref{BJprop4-cr-nbr-cg-wd}, we then have
\begin{align}
\max\deg_z P_{D^2_r}(v,z) &=\max\deg_z P_{W_2(L'')}(v,z)\notag\\&=2c(D'')-1\notag\\&=\max\deg_z P_{W_2(\hat\beta_{r-1})}(v,z)+4\notag\\
&=\max\deg_z P_{D_{r-1}}(v,z)+4.\label{eqn5-pf-main-thm-1}
\end{align}
Since $\max\deg_z P_{D_{r}^3}(v,z)$ is too low to interfere with our main calculation by applying
Morton's inequality, we see that maximal degree in $z$ for $P_{D_{r}^3}(v,z)$ does not contribute anything to $\max\deg_z P_{D_{r}}(v,z)$. From (\ref{eqn1-pf-main-thm-1}), (\ref{eqn4-pf-main-thm-1}), (\ref{eqn5-pf-main-thm-1}) and Lemma \ref{main-lem-1}, it is easily seen that
\begin{equation}\label{eqn8-pf-main-thm-1}
\max\deg_z P_{D_r}(v,z) = \max\{M(D_{r-1})+6, N_r-2\}.
\end{equation}
On the other hand, we see from (\ref{defn-N_r}) and (\ref{main-formula-1-r-1}) that
\begin{align}\label{eqns-pf-main-thm-1}
M(D_{r-1})+6&=\max\deg_z P_{D_{r-1}}(v,z) + 6\notag\\
&=\max\deg_z P_{W_2(\hat\beta_{r-1})}(v,z)+6\notag\\
&= (2c(\hat\beta_{r-1})-1)+6\notag\\
&=6r-1\notag\\
&=N_r~ (r \geq 2).
\end{align}
Hence it follows from (\ref{eqn8-pf-main-thm-1}) and (\ref{eqns-pf-main-thm-1}) that
\begin{equation}\label{eqn6-pf-main-thm-1}
\max\deg_z P_{D_r}(v,z) = N_r =\max\deg_z P_{D_{r-1}}(v,z)+6.\end{equation}
Combining (\ref{main-formula-1-r-1}) and (\ref{eqn6-pf-main-thm-1}), we finally obtain
\begin{align*}
\max\deg_z P_{D_r}(v,z)
&=\max\deg_z P_{D_{r-1}}(v,z)+6\\
&=2c(\hat\beta_{r-1})-1 + 6\\
&=2(c(\hat\beta_{r-1})+3)-1\\
&=2c(\hat\beta_{r})-1.
\end{align*}

{\bf Case II.} $\epsilon_{r3}=-1$.

\bigskip

In this case, it follows from the condition (\ref{braid-beta-1}) that $\epsilon_{r1}=\epsilon_{r2}=-1$. Then it is easily seen that the corresponding link diagram $W_2(\hat\beta_r)$ is just the mirror image of the diagram $D_r$ for which the assertion has already been established in the previous case I. On the other hand, it is well known that if $L^*$ is the mirror image of an oriented link $L$, then  $P_{L^*}(v,z)=P_{L}(v^{-1},z)$. This fact implies that
$P_{W_2(\hat\beta_r)}(v,z)=P_{D_r}(v^{-1},z)$. Hence
\begin{align*}
\max\deg_z P_{W_2(\hat\beta_r)}(v,z)&=\max\deg_z P_{D_r}(v^{-1},z)\\
&=\max\deg_z P_{D_r}(v,z)\\&=2c(\hat\beta_r)-1.
\end{align*} Finally, it is straightforward from (\ref{cr-nbr-beta-r}) that $2c(\hat\beta_{r})-1 =6r-1$ for each $r \geq 1$. This completes the proof of Theorem \ref{main-thm-1}.
\end{proof}


\section{A family of alternating knots for which Tripp's conjecture holds}\label{sect-fkmis}

Let us begin this section with the following:

\begin{lem}\label{main-lem-2}
Let $\beta_r (r \geq 1)$ be a quasitoric braid of type $(r+1,3)$ in (\ref{braid-beta}). If $L$ is a link having diagram $D$ obtained from the standard closed braid diagram of $\hat\beta_r$ as shown in Figure \ref{fig5.1} by replacing a crossing with a full twist (so that $c(D)=c(\hat\beta_r)+1$), then $$\max\deg_z P_{W_2(D)}(v,z)=2c(D)-1.$$
\end{lem}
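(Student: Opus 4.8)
The plan is to obtain this as a direct application of Proposition \ref{BJprop4-cr-nbr-cg-wd}, taking the base link $L'$ to be $\hat\beta_r$ itself and $D'$ to be its standard closed braid diagram of Figure \ref{fig5.1}. Since Proposition \ref{BJprop4-cr-nbr-cg-wd} promotes the identity $\max\deg_z P_{W_2(D')}(v,z)=2c(D')-1$ across a single crossing-to-full-twist replacement, once its three hypotheses are verified for $D'$ the conclusion for $D$ is immediate.

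First I would check the hypotheses of Proposition \ref{BJprop4-cr-nbr-cg-wd} for $L'=\hat\beta_r$ and $D'$ the standard closed braid diagram. By Remark \ref{prop-r-qtb}(2), $\hat\beta_r$ is a non-split alternating link without nugatory crossings, so $D'$ is a minimal crossing diagram with $c(D')=c(\hat\beta_r)=3r$; in particular $c(D')=c(L')$, as required. The remaining and only substantive input is the maximum $z$-degree equality $\max\deg_z P_{W_2(D')}(v,z)=2c(D')-1$. This is precisely Theorem \ref{main-thm-1}, which gives $\max\deg_z P_{W_2(\hat\beta_r)}(v,z)=6r-1=2(3r)-1=2c(D')-1$; since the HOMFLYPT polynomial is a link invariant and $W_2(\hat\beta_r)$ is by definition the doubled link associated with $D'$, this is the same quantity as $\max\deg_z P_{W_2(D')}(v,z)$.

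With the hypotheses in hand, I would apply Proposition \ref{BJprop4-cr-nbr-cg-wd} to the diagram $D$ obtained from $D'$ by replacing any one crossing with a full twist, noting $c(D)=c(D')+1=3r+1$. The proposition then yields
\[
\max\deg_z P_{W_2(D)}(v,z)=2c(D)-1=\max\deg_z P_{W_2(D')}(v,z)+2=(6r-1)+2=6r+1,
\]
and since $2c(D)-1=2(3r+1)-1=6r+1$ this is exactly the claimed equality.

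Because every step is a citation of an already-established result, there is no genuine analytic obstacle here; the only care needed is bookkeeping: confirming that the ``$W_2$ of the standard diagram'' appearing in the hypothesis of Proposition \ref{BJprop4-cr-nbr-cg-wd} coincides with the link $W_2(\hat\beta_r)$ treated in Theorem \ref{main-thm-1}, and that the crossing count tracks correctly so that $2c(D')-1$ really does increase by $2$ to $2c(D)-1$. In effect this lemma is the single-step version of the inductive mechanism already used inside the proof of Theorem \ref{main-thm-1}, repackaged so that it can feed the crossing-replacement construction of the next section.
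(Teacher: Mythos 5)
Your proposal is correct and follows essentially the same route as the paper: take $L'=\hat\beta_r$ with its standard closed braid diagram $D'$, verify $c(D')=c(L')=3r$ and $\max\deg_z P_{W_2(D')}(v,z)=2c(D')-1$ via Theorem \ref{main-thm-1}, and then apply Proposition \ref{BJprop4-cr-nbr-cg-wd} once. The only difference is that you spell out the bookkeeping (non-splitness, minimality of the diagram, the crossing count) which the paper dispatches as ``obvious.''
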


\begin{proof}
Let $L'$ be the link represented by a quasitoric braid $\beta_r$. It is obvious that $L'$ is a non-split alternating link with a diagram $D'=\hat\beta_r$ satisfying $c(L')=c(D')=3r$. By Theorem \ref{main-thm-1}, $\max\deg_z P_{W_2(D')}(v,z)=2c(D')-1$. Hence the assertion follows from Proposition \ref{BJprop4-cr-nbr-cg-wd}.
 \end{proof}

\begin{thm}\label{main-thm-2}
Let $\beta_r (r \geq 1)$ be a quasitoric braid of type $(r+1,3)$ in (\ref{braid-beta}) and let $\mathcal K_r$ be the class consisting of the alternating knot $\hat\beta_r$ itself (if it is a knot) and all alternating knots having diagrams which can be obtained from the standard diagram of the closed braid $\hat\beta_r$ as shown in Figure \ref{fig5.1}, by repeatedly replacing a crossing by a full twist. Then for every $K \in \mathcal K_r$ and any integer $m$, \begin{equation}\label{eqn0-thm-2}
\max\deg_z P_{W_\pm(K,m)}(v,z)=2c(K),
\end{equation}
and therefore $$g_c(W_\pm(K,m))=c(K).$$
\end{thm}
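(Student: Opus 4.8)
The plan is to reduce Theorem \ref{main-thm-2} to the machinery already assembled in the paper, chiefly Theorem \ref{main-thm-1}, Proposition \ref{BJprop2-cr-nbr-cg-wd}, and Proposition \ref{prop3-cr-nbr-cg-wd}. The key observation is that the class $\mathcal K_r$ is defined \emph{inductively}: its base element is the closed braid $\hat\beta_r$ (when this is a knot), and every other member is obtained from a minimal-crossing diagram of an earlier member by replacing a single half-twist crossing with three half-twists (equivalently, replacing a crossing by a full twist, as in Figure \ref{fig:3-half twists}). Thus a proof by induction on the number of such crossing-replacement moves is the natural structure.

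For the base case I would argue that the generating knot $\hat\beta_r$ already satisfies the target equality. Theorem \ref{main-thm-1} gives $\max\deg_z P_{W_2(\hat\beta_r)}(v,z)=2c(\hat\beta_r)-1$, and this is precisely the hypothesis on the \emph{doubled link} $W_2$ needed to feed into the clasp-switching relation. Concretely, by Proposition \ref{prop3-cr-nbr-cg-wd}(1), whenever $M(W_2(D))>0$ one has $M(W_\pm(D))=M(W_2(D))+1$; applied to a minimal diagram of $\hat\beta_r$ this upgrades the $W_2$-statement $6r-1=2c(\hat\beta_r)-1$ into $\max\deg_z P_{W_\pm(\hat\beta_r,m)}(v,z)=2c(\hat\beta_r)$. (One should also invoke Proposition \ref{prop3-cr-nbr-cg-wd}(2) and the writhe-normalization so that the statement is independent of the twist parameter $m$.)

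For the inductive step I would invoke Proposition \ref{BJprop2-cr-nbr-cg-wd} verbatim: if $K'\in\mathcal K_r$ already satisfies $\max\deg_z P_{W_\pm(K',m)}(v,z)=2c(K')$, and $K$ is obtained from a $c(K')$-minimizing diagram of $K'$ by replacing one crossing with three half-twists, then the same equality holds for $K$, with the conclusion $g_c(W_\pm(K,m))=c(K)$ built in. Since every knot in $\mathcal K_r$ is reachable from $\hat\beta_r$ by finitely many such moves, induction closes the loop and establishes (\ref{eqn0-thm-2}) for all $K\in\mathcal K_r$. The final identity $g_c(W_\pm(K,m))=c(K)$ then follows by combining (\ref{eqn0-thm-2}) with the sandwich inequality (\ref{Morton-ineq-c-gen-2}) of Proposition \ref{prop2-cr-nbr-cg-wd}: the lower bound $\tfrac12\max\deg_z P=c(K)$ meets the upper bound $g_c(W_\pm(D,m))=c(K)$, forcing equality throughout.

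The one point requiring genuine care—and which I expect to be the main obstacle—is verifying that Proposition \ref{BJprop2-cr-nbr-cg-wd} is legitimately applicable at the \emph{first} step, i.e. that the base knot $\hat\beta_r$ really furnishes a $c$-minimizing diagram to which a crossing-to-full-twist replacement preserves minimality of the crossing number. This hinges on Remark \ref{prop-r-qtb}(2), which guarantees that $\hat\beta_r$ is a reduced alternating diagram and hence crossing-minimal; one must confirm that after each replacement the resulting diagram is still alternating and reduced so that $c(D)=c(K)$ continues to hold (this is implicit in Brittenham--Jensen's setup but deserves an explicit sentence). A secondary subtlety is the requirement that $\hat\beta_r$ be a \emph{knot} rather than a link for the Whitehead-double statement to make sense—by Remark \ref{prop-r-qtb}(3) this fails exactly when $r\equiv 2 \pmod 3$, so the class $\mathcal K_r$ must (and does, by its definition) exclude the link case from serving as a base knot while still admitting its knotted descendants.
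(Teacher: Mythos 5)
Your reduction has a genuine gap at its core: you have misread the move that generates $\mathcal K_r$. The class in Theorem \ref{main-thm-2} is built by replacing a crossing by a \emph{full twist}, which adds \emph{one} crossing and in general changes the number of components (a knot diagram can become a link diagram and vice versa); this is the move of Proposition \ref{BJprop4-cr-nbr-cg-wd}. It is \emph{not} equivalent to replacing a crossing by three half-twists, which adds \emph{two} crossings and preserves the number of components; that is the move of Proposition \ref{BJprop2-cr-nbr-cg-wd}. Your parenthetical ``equivalently'' conflates the two, and the induction you build on Proposition \ref{BJprop2-cr-nbr-cg-wd} (knot-to-knot steps, two crossings at a time) therefore cannot reach all of $\mathcal K_r$. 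Concretely, for $r=1$ the figure-eight knot lies in $\mathcal K_1$ (one full-twist replacement applied to the standard trefoil diagram, $3\to 4$ crossings, knot to knot), but every knot reachable from the trefoil by your moves has odd crossing number, so your argument never establishes (\ref{eqn0-thm-2}) for it. The failure is starker when $r\equiv 2 \pmod 3$: by Remark \ref{prop-r-qtb}(3) the generator $\hat\beta_r$ is then a three-component link, so your knot-level induction has no base case at all, even though $\mathcal K_r$ contains infinitely many knotted descendants; your last paragraph notices this case but offers no mechanism by which those descendants are ever treated, since there is no ``earlier member'' of $\mathcal K_r$ that is a knot from which they arise by your move.

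The paper avoids both problems by running the induction one level down, on the diagram invariant $\max\deg_z P_{W_2(D)}(v,z)$ rather than on $\max\deg_z P_{W_\pm(K,m)}(v,z)$: Lemma \ref{main-lem-2} (Theorem \ref{main-thm-1} plus one application of Proposition \ref{BJprop4-cr-nbr-cg-wd}) starts the induction at $\hat\beta_r$ regardless of whether it is a knot or a link, and repeated use of Proposition \ref{BJprop4-cr-nbr-cg-wd} -- which is stated for non-split \emph{links} and for the genuine one-crossing move -- yields $\max\deg_z P_{W_2(D)}(v,z)=2c(D)-1$ for every diagram $D$ arising in the construction, through all intermediate stages. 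Only for the final diagram $D$ of the knot $K$ does one pass to the Whitehead double, exactly as in your endgame: Proposition \ref{prop3-cr-nbr-cg-wd} (1) and (2) convert the $W_2$ statement into $\max\deg_z P_{W_\pm(K,m)}(v,z)=2c(K)$, and the sandwich (\ref{Morton-ineq-c-gen-2}) then forces $g_c(W_\pm(K,m))=c(K)$. So your base case and your concluding steps coincide with the paper's; what must be replaced is the inductive engine, which has to operate on doubled links of (possibly link) diagrams, not on Whitehead doubles of knots.
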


\begin{proof} Let $K$ be an alternating knot in $K_r$. Then $K$ has a diagram $D$ which is obtained from the standard diagram of the closed braid $\hat\beta_r$ by repeatedly replacing a crossing by a full twist. By Lemma \ref{main-lem-2} and repeatedly applying Proposition \ref{BJprop4-cr-nbr-cg-wd}, we obtain
\begin{equation}\label{pf-eqn1-thm-2}
\max\deg_z P_{W_2(D)}(v,z)=2c(D)-1.
\end{equation}
Now, for any given integer $m$, let $W_\pm(K,m)$ be the $m$-twisted positive/negative Whitehead double of $K$ and let $W_\pm(D,m)$ be the canonical diagram for $W_\pm(K,m)$ associated with $D$. Since $c(D) > 3$, it follows from (\ref{pf-eqn1-thm-2}) and Proposition \ref{prop3-cr-nbr-cg-wd} that $\max\deg_z P_{W_\pm(K,m)}(v,z) > 0$ and hence \\$\max\deg_z P_{W_2(D, w(D))}(v,z) \not= 1$. By (\ref{eq1-prop3-cr-nbr-cg-wd}) and (\ref{eq2-prop3-cr-nbr-cg-wd}), we have
\begin{align*}
 \max\deg_z P_{W_\pm(K,m)}(v,z)
 &=\max\deg_z P_{W_\pm(D,m)}(v,z)\\
 &=\max\deg_z P_{W_2(D,m)}(v,z) +1\\
 &=\max\deg_z P_{W_2(D,w(D))}(v,z) +1\\
 &=\max\deg_z P_{W_2(D)}(v,z)+1\\
 &=2c(D)-1+1\\
 &=2c(D)=2c(K).
\end{align*}
This establishes the desired identity (\ref{eqn0-thm-2}).

Finally, it follows from (\ref{Morton-ineq-c-gen-2}) and (\ref{eqn0-thm-2}) that
\begin{align*}
c(K)&=\frac{1}{2} \max\deg_z P_{W_\pm(K,m)}(v,z) \leq g_c(W_\pm(K,m))\notag\\ &\leq g_c(W_\pm(D,m))=c(K).
\end{align*}
This gives $g_c(W_\pm(K,m))=c(K)$ and competes the proof.
\end{proof}

\begin{rem} 
(1) The closure $\hat\beta_1$ of the quasitoric braid  $\beta_1=(\sigma^{\epsilon_{11}})^3$ is the right-handed trefoil or left-handed trefoil knot (see Remark \ref{prop-r-qtb} (1)) and so the class $\mathcal K_1$ in Theorem \ref{main-thm-2} is just the class $\mathcal K$ in Proposition \ref{BJprop3-cr-nbr-cg-wd}. So, in case of $r=1$, Theorem \ref{main-thm-2} is the same as Proposition \ref{BJprop3-cr-nbr-cg-wd}. Hence $\mathcal K_1$ contains all $(2, n)$-torus knots, all the $2$-bridge knots, and all alternating  pretzel knots. 

(2) In \cite{BJ}, Brittenham and Jensen noticed that the Borromean ring $L$, the closure of the quasitoric braid $\beta_2$, satisfy $\max\deg_z P_{W_2(L)}(v,z)=2c(L)-1$ (see Example \ref{examp1-br}), which give rise, using Proposition \ref{BJprop4-cr-nbr-cg-wd}, to a family, it is indeed the family $K_2$ in Theorem \ref{main-thm-2}, of alternating knots satisfying the equality (\ref{eq-tripp-conj}), different from the family $\mathcal K$ given by Proposition \ref{BJprop3-cr-nbr-cg-wd}. On the other hand, it is clear that $\hat\beta_2 \notin \mathcal K_3$ and so $\mathcal K_3$ is also a family of alternating knots satisfying the equality (\ref{eq-tripp-conj}), different from $\mathcal K_2$, and so on. Therefore, Theorem \ref{main-thm-2} provides an infinite sequence $$\mathcal K_1(=\mathcal K), \mathcal K_2, \mathcal K_3, \ldots, \mathcal K_i, \ldots$$ of infinite families $\mathcal K_i$ of alternating knots satisfying Tripp-Nakamura's Conjecture. We define
$$\mathcal K^3=\bigcup_{r=1}^\infty\mathcal K_r.$$ Then the infinite family $\mathcal K^3$ of alternating knots is an extension of the previous results of Tripp \cite{Tri}, Nakamura \cite{Nak} and Brittenham-Jensen \cite{BJ}.
\end{rem}

\begin{examp}
Let $A=(n_{ij})_{1\leq i \leq r; 1 \leq j \leq 3}$ be an arbitrary given $r\times 3$ integral matrix, i.e.,
$$A=\left(
\begin{array}{ccc}
n_{11}&n_{12}& n_{13}\\
n_{21}&n_{22}& n_{23}\\
\vdots & \vdots & \vdots\\
n_{r1}&n_{r2}& n_{r3}
\end{array}
\right).$$ 
Let $K_A$ denote an oriented link in $S^3$ having a diagram $D_A$ as shown in Figure \ref{fig6.1} (a) in which each tangle labeled a non-zero integer $n_{ij}$ denotes a vertical $n_{ij}$ half-twists as shown in Figure~\ref{fig6.1}(b) or a horizontal $n_{ij}$ half-twists.
\begin{figure}[tb]
\vspace*{10pt} \centerline{ \xy
 (85,0);(90,0) **@{-}, (85,6);(90,6) **@{-},
 (85,0);(85,6) **@{-}, (90,0);(90,6) **@{-}, (87.5,3)*{_{n_{r2}}},
 (82,12);(87,12) **@{-}, (82,18);(87,18) **@{-},
 (82,12);(82,18) **@{-}, (87,12);(87,18) **@{-}, (84.5,15)*{_{n_{32}}},
 (75,16);(80,16) **@{-}, (75,22);(80,22) **@{-},
 (75,16);(75,22) **@{-}, (80,16);(80,22) **@{-}, (77.5,19)*{_{n_{22}}},
 (68,20);(73,20) **@{-}, (68,26);(73,26) **@{-},
 (68,20);(68,26) **@{-}, (73,20);(73,26) **@{-}, (70.5,23)*{_{n_{12}}},
 (66,1);(85,1) **@{-}, (90,1);(92,1) **@{-},
 (83,5);(85,5) **@{-}, (90,5);(92,5) **@{-},
 (80,13);(82,13) **@{-}, (87,13);(89,13) **@{-},
 (66,17);(75,17) **@{-}, (80,17);(82,17) **@{-}, (87,17);(92,17) **@{-},
 (66,21);(68,21) **@{-}, (73,21);(75,21) **@{-}, (80,21);(92,21) **@{-},
 (66,25);(68,25) **@{-}, (73,25);(92,25) **@{-}, (86,10)*{\ddots},
 (59,0);(64,0) **@{-}, (59,6);(64,6) **@{-},
 (59,0);(59,6) **@{-}, (64,0);(64,6) **@{-}, (61.5,3)*{_{n_{r1}}},
 (56,12);(61,12) **@{-}, (56,18);(61,18) **@{-},
 (56,12);(56,18) **@{-}, (61,12);(61,18) **@{-}, (58.5,15)*{_{n_{31}}},
 (49,16);(54,16) **@{-}, (49,22);(54,22) **@{-},
 (49,16);(49,22) **@{-}, (54,16);(54,22) **@{-}, (51.5,19)*{_{n_{21}}},
 (42,20);(47,20) **@{-}, (42,26);(47,26) **@{-},
 (42,20);(42,26) **@{-}, (47,20);(47,26) **@{-}, (44.5,23)*{_{n_{11}}},
 (40,1);(59,1) **@{-}, (64,1);(66,1) **@{-},
 (57,5);(59,5) **@{-}, (64,5);(66,5) **@{-},
 (54,13);(56,13) **@{-}, (61,13);(63,13) **@{-},
 (40,17);(49,17) **@{-}, (54,17);(56,17) **@{-}, (61,17);(66,17) **@{-},
 (40,21);(42,21) **@{-}, (47,21);(49,21) **@{-}, (54,21);(66,21) **@{-},
 (40,25);(42,25) **@{-}, (47,25);(66,25) **@{-}, (60,10)*{\ddots},
 (125,0);(130,0) **@{-}, (125,6);(130,6) **@{-},
 (125,0);(125,6) **@{-}, (130,0);(130,6) **@{-}, (127.5,3)*{_{n_{r3}}},
 (122,12);(127,12) **@{-}, (122,18);(127,18) **@{-},
 (122,12);(122,18) **@{-}, (127,12);(127,18) **@{-}, (124.5,15)*{_{n_{33}}},
 (115,16);(120,16) **@{-}, (115,22);(120,22) **@{-},
 (115,16);(115,22) **@{-}, (120,16);(120,22) **@{-}, (117.5,19)*{_{n_{23}}},
 (108,20);(113,20) **@{-}, (108,26);(113,26) **@{-},
 (108,20);(108,26) **@{-}, (113,20);(113,26) **@{-}, (110.5,23)*{_{n_{13}}},
 (106,1);(125,1) **@{-}, (130,1);(132,1) **@{-},
 (123,5);(125,5) **@{-}, (130,5);(132,5) **@{-},
 (120,13);(122,13) **@{-}, (127,13);(129,13) **@{-},
 (106,17);(115,17) **@{-}, (120,17);(122,17) **@{-}, (127,17);(132,17) **@{-},
 (106,21);(108,21) **@{-}, (113,21);(115,21) **@{-}, (120,21);(132,21) **@{-},
 (106,25);(108,25) **@{-}, (113,25);(132,25) **@{-}, (126,10)*{\ddots},
 (40,31);(132,31) **@{-}, (40,33);(132,33) **@{-}, (40,35);(132,35)
 **@{-}, (40,42);(132,42) **@{-}, (50,39)*{\vdots},
 (122,39)*{\vdots},
 (92,1);(108,1) **@{-},
 (92,17);(108,17) **@{-},
 (92,21);(108,21) **@{-},
 (92,25);(108,25) **@{-},
 (40,25);(40,31) **\crv{(38,25)&(38,31)}, (40,21);(40,33)
 **\crv{(36,21)&(36,33)}, (40,17);(40,35) **\crv{(34,17)&(34,35)},
 (40,1);(40,42) **\crv{(30,1)&(30,42)},
 (132,25);(132,31) **\crv{(134,25)&(134,31)}, (132,21);(132,33)
 **\crv{(136,21)&(136,33)}, (132,17);(132,35) **\crv{(138,17)&(138,35)},
 (132,1);(132,42) **\crv{(142,1)&(142,42)},
 (77,25);(78,25) **@{} ?>*\dir{>}, (50,25);(52,25) **@{} ?>*\dir{>},
 (117,25);(118,25) **@{} ?>*\dir{>}, (87.5,-7)*{(a) ~D_A},
 \endxy}
 \vskip 0.5cm
 \centerline{\xy (52,19);(60,19) **@{-},
(52,27);(60,27) **@{-}, (52,19);(52,27) **@{-}, (60,19);(60,27)
**@{-}, (50,20);(52,20) **@{-}, (50,26);(52,26) **@{-},
(60,20);(62,20) **@{-}, (60,26);(62,26) **@{-}, (56,23)*{n_{ij}},
(70,23)*{=},
 (80,41);(82.5,37) **\crv{(80,40)&(81,37.5)}, (85,33);(82.5,37) **\crv{(85,34)&(84,36.5)},
 (85,41);(83.3,37.5) **\crv{(85,40)&(84,38)}, (80,33);(81.7,36.5) **\crv{(80,34)&(81,36)},
 (80,33);(82.5,29) **\crv{(80,32)&(81,29.5)}, (85,25);(82.5,29) **\crv{(85,26)&(84,28.5)},
 (85,33);(83.3,29.5) **\crv{(85,32)&(84,30)}, (80,25);(81.7,28.5) **\crv{(80,26)&(81,28)},
 (80,21);(82.5,17) **\crv{(80,20)&(81,17.5)}, (85,13);(82.5,17) **\crv{(85,14)&(84,16.5)},
 (85,21);(83.3,17.5) **\crv{(85,20)&(84,18)}, (80,13);(81.7,16.5) **\crv{(80,14)&(81,16)},
 (82.5,24)*{\vdots}, (82.5,7)*{n_{ij} > 0},
 (90,23)*{,},
 (100,41);(97.5,37) **\crv{(100,40)&(99,37.5)}, (95,33);(97.5,37) **\crv{(95,34)&(96,36.5)},
 (95,41);(96.7,37.5) **\crv{(95,40)&(96,38)}, (100,33);(98.3,36.5) **\crv{(100,34)&(99,36)},
 (100,33);(97.5,29) **\crv{(100,32)&(99,29.5)}, (95,25);(97.5,29) **\crv{(95,26)&(96,28.5)},
 (95,33);(96.7,29.5) **\crv{(95,32)&(96,30)}, (100,25);(98.3,28.5) **\crv{(100,26)&(99,28)},
 (100,21);(97.5,17) **\crv{(100,20)&(99,17.5)}, (95,13);(97.5,17) **\crv{(95,14)&(96,16.5)},
 (95,21);(96.7,17.5) **\crv{(95,20)&(96,18)}, (100,13);(98.3,16.5) **\crv{(100,14)&(99,16)},
 (97.5,24)*{\vdots}, (97.5,7)*{n_{ij} < 0}, (77,-3)*{(b)},
\endxy
 }
\vspace*{10pt}\caption{}\label{fig6.1}
\end{figure}
Suppose that $n_{ij}n_{i+1j} < 0$ and $n_{ij}n_{ij+1} > 0$ for each $i=1,2,\ldots, r-1$ and $j=1,2,3$ and $K_A$ is a knot (eventually, an alternating knot). Let $A'=(\epsilon_{ij})_{1\leq i \leq r; 1 \leq j \leq 3}$ be the integral matrix obtained from $A$ by defining $\epsilon_{ij}=\frac{n_{ij}}{|n_{ij}|}~ (1\leq i \leq r; 1 \leq j \leq 3)$ and let $K_{A'}$ be the oriented alternating link having a diagram $D_{A'}$. Then $K_{A'}$ is the closure of a quasitoric braid $\beta_r$ in (\ref{braid-beta}). Then it follows from  Theorem \ref{main-thm-2} that $K_A \in \mathcal K_r$ and so
\begin{equation*}
\max\deg_z P_{W_2(K_A)}(v,z)=\max\deg_z P_{W_2(K_{A'})}(v,z)+2\sum_{i=1}^r\sum_{j=1}^3(|n_{ij}|-1).
\end{equation*}
Consequently. for every integer $m$,
$$g_c(W_\pm(K_A,m))=\sum_{i=1}^r\sum_{j=1}^3|n_{ij}|=c(K_A).$$
\end{examp}


\section{Proof of Lemma \ref{main-lem-1}}
\label{sect-pf-lemma}

In this section, we prove Lemma \ref{main-lem-1}. For this purpose, we first remind that $D_r$ denotes the doubled link $W_2(\hat\beta_r)$ corresponding to the matrix notation $Q_r$ with $\epsilon_{r3}=1$. We also remind that $D_{r}^i$ ($4 \leq i \leq 8$) denotes the link diagram  obtained from $D_r$ by replacing $T^{1}_{r3}$ with $T_i$, where $T_4=F_1, T_5=F_2, T_6=F_3, T_7=F_4, T_8=G$ (cf. Section \ref{sect-miwd3-plrq}).

\bigskip

\noindent{\bf Proof of (1).}  Consider a partial skein tree for $D_{r}^{4}~ (r\geq 3)$ and isotopy deformations as shown in Figure \ref{res-tree-dr41}, which yields the identity:
\begin{figure}[ht]
\begin{center}
\resizebox{0.65\textwidth}{!}{%
  \includegraphics{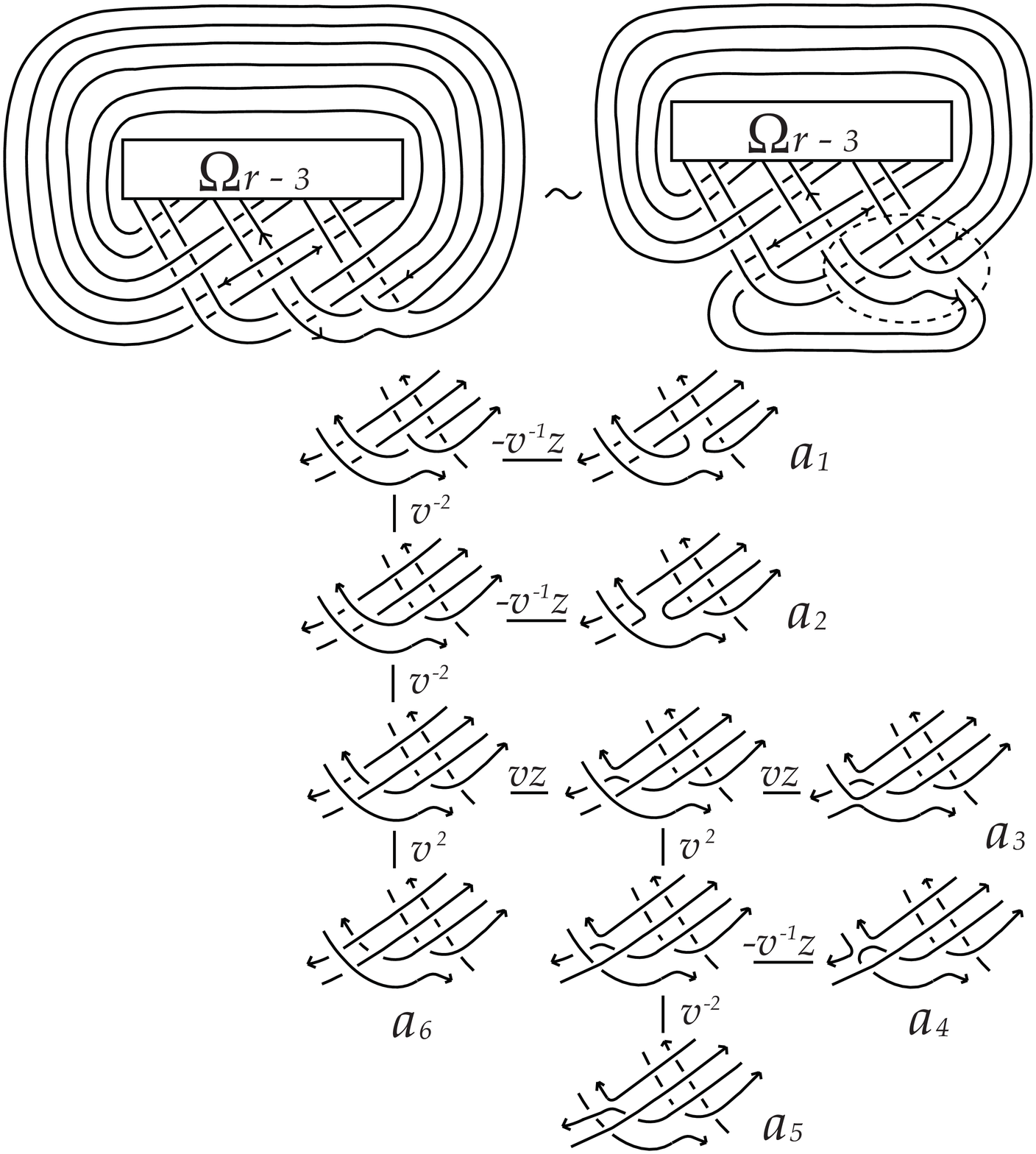}}
\caption{A partial skein tree for $D_{r}^{4}$.}\label{res-tree-dr41}
\end{center}
\end{figure}
\begin{align}\label{eq3-pf-(1)-lem-0}
P_{D_{r}^{4}}(v,z)&=v^{-2}P_{a_6}(v,z)+v^{-3}zP_{a_5}(v,z)-v^{-2}z^2P_{a_4}(v,z)\notag\\
&+v^{-2}z^2P_{a_3}(v,z)-v^{-3}zP_{a_2}(v,z)-v^{-1}zP_{a_1}(v,z).
\end{align}
It is clear from Figure \ref{res-tree-dr41} that the link $a_{1}$ does not contribute anything to $\max\deg_zP_{D_{r}^{4}}(v,z)$. For the links $a_2, a_4$ and $a_5$, it follows from Morton's inequality in (\ref{Morton-ineq-c-gen}) that
\begin{align}
\max\deg_zP_{a_2}(v,z)
&\leq c(a_{2})-s(a_{2})+1\notag\\
&\leq (c(D_r)-6)-(s(D_r)-2)+1\notag\\
&=N_r-4,\label{eq3-pf-(1)-lem-1}\\
\max\deg_zP_{a_{4}}(v,z)
&\leq c(a_{4})-s(a_{4})+1\notag\\
&\leq (c(D_r)-7)-(s(D_r)-2)+1\notag\\
&=N_r-5,\label{eq3-pf-(1)-lem-2}\\
\max\deg_zP_{a_{5}}(v,z)
&\leq c(a_{5})-s(a_{5})+1\notag\\
&\leq (c(D_r)-11)-(s(D_r)-5)+1\notag\\
&=N_r-6.\label{eq3-pf-(1)-lem-3}
\end{align}
For the link $a_{3}$, we obtain from Figure \ref{res-tree-dr42} that
$$P_{a_{3}}(v,z)=v^2P_{a_{8}}(v,z)+vzP_{a_{7}}(v,z).$$

\begin{figure}[ht]
\begin{center}
\resizebox{0.55\textwidth}{!}{%
  \includegraphics{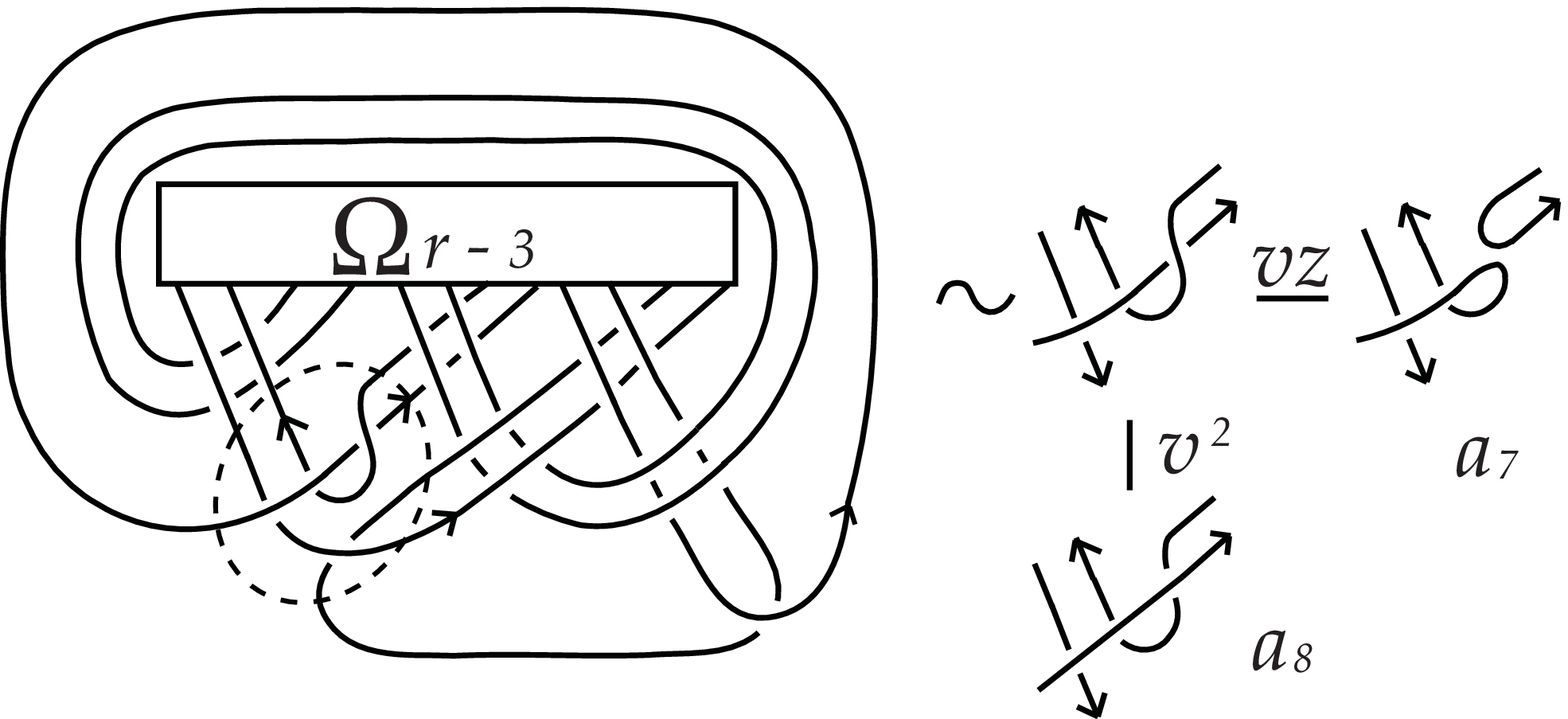}}
\caption{A partial skein tree for $a_3$.}\label{res-tree-dr42}
\end{center}
\end{figure}

Clearly, the link $a_{7}$ does not contribute anything to $\max\deg_zP_{a_3}(v,z)$ and so by Morton's inequality,
\begin{align}
\max\deg_zP_{a_{3}}(v,z)&=\max\deg_zP_{a_{8}}(v,z)
\leq c(a_{8})-s(a_{8})+1\notag\\
&\leq (c(D_r)-13)-(s(D_r)-6)+1\notag\\
&=N_r-7.\label{eq3-pf-(1)-lem-4}
\end{align}

From (\ref{eq3-pf-(1)-lem-0})-(\ref{eq3-pf-(1)-lem-4})  and Claim 1 below, we obtain
\begin{align*}
\max&\deg_zP_{D_{r}^{4}}(v,z)\\
&\leq \max\{M(a_6), M(a_5)+1, M(a_{4})+2,  M(a_{3})+2,  M(a_{2})+1\}\\
&\leq \max\{M(a_6), N_r-5, N_r-3, N_r-5, N_r-3\}\\
&= N_r-3.
\end{align*}
This establishes (1), as desired.

\bigskip

{\bf Claim 1.} $M(a_6)=\max\deg_zP_{a_6}(v,z) \leq N_r-3 ~(r \geq 3).$

\bigskip

{\bf Proof of Claim 1.} Consider a partial skein tree for $a_6$ and isotopy deformations as shown in Figure \ref{res-tree-dr43}, which gives the identity:
\begin{align}\label{eq3-pf-(1)-lem-1}
P_{a_6}(v,z)&=P_{a_{12}}(v,z)-v^{-1}zP_{a_{11}}(v,z)+z^{2}P_{a_{10}}(v,z)+vzP_{a_{9}}(v,z).
\end{align}

\begin{figure}[ht]
\begin{center}
\resizebox{0.70\textwidth}{!}{%
  \includegraphics{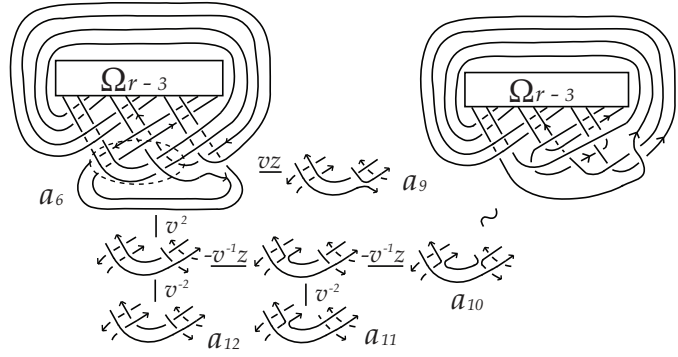}}
\caption{A partial skein tree for $a_6$.}\label{res-tree-dr43}
\end{center}
\end{figure}

Using Morton's inequality, we obtain
\begin{align}
\max\deg_zP_{a_{12}}(v,z)
&\leq (c(D_r)-6)-(s(D_r)-3)+1
=N_r-3,\label{eq3-pf-(1)-lem-5}\\
\max\deg_zP_{a_{11}}(v,z)
&\leq (c(D_r)-8)-(s(D_r)-4)+1
=N_r-4,\label{eq3-pf-(1)-lem-6}\\
\max\deg_zP_{a_{10}}(v,z)
&\leq (c(D_r)-8)-(s(D_r)-3)+1
=N_r-5\label{eq3-pf-(1)-lem-7}.
\end{align}

\begin{figure}[ht]
\begin{center}
\resizebox{0.75\textwidth}{!}{%
  \includegraphics{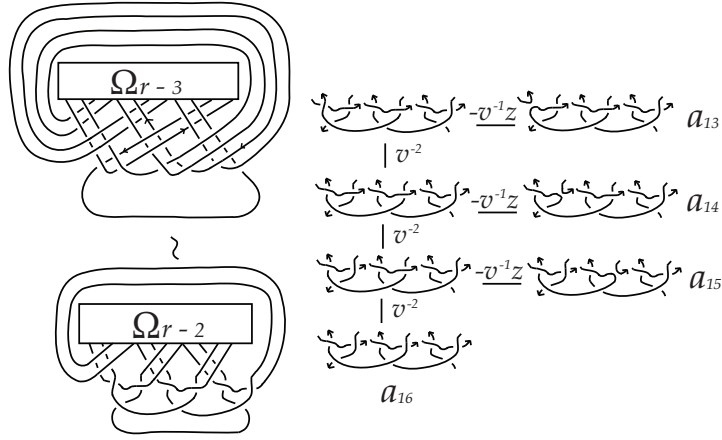}}
\caption{A partial skein tree for $a_{9}$.}\label{res-tree-dr45}
\end{center}
\end{figure}

By a partial skein tree for $a_{9}$ and isotopy deformations as shown in Figure \ref{res-tree-dr45}, we get
\begin{align*}
P_{a_{9}}(v,z)
&=v^{-6}P_{a_{16}}(v,z)-v^{-5}zP_{a_{15}}(v,z)-\notag\\&~~~~~~~~~v^{-3}zP_{a_{14}}(v,z)-v^{-1}zP_{a_{13}}(v,z).
\end{align*}

It is clear that the links $a_{13}$, $a_{14}$ and $a_{15}$ do not contribute anything to $\max\deg_zP_{a_{9}}(v,z)$. Then
\begin{equation}\label{eq3-pf-(1)-lem-18}
\max\deg_zP_{a_{9}}(v,z)=\max\deg_zP_{a_{16}}(v,z).
\end{equation}

In the link diagram $a_{16}$, we consider the three crossings labeled 1, 2 and 3 in the $(r-1)$-th row as indicated in the first row of Figure \ref{res-tree-dr46} according as the case (a) $r\equiv 2$ (mod 3), (b) $r\equiv 0$ (mod 3) and (c) $r\equiv 1$ (mod 3). 
\begin{figure}[ht]
\begin{center}
\resizebox{0.90\textwidth}{!}{%
  \includegraphics{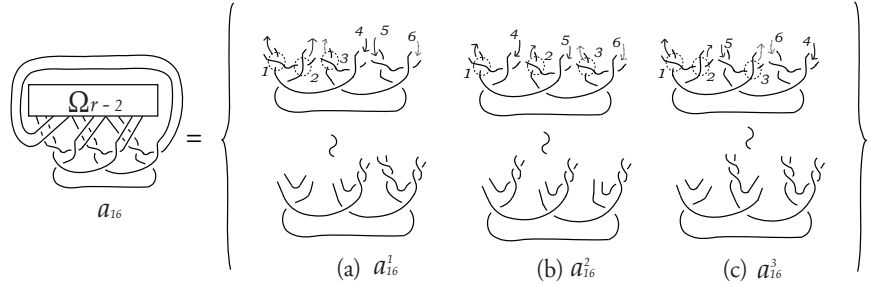}}
\caption{$r\equiv2$, ~$r\equiv0$,~$r\equiv1$ (mod 3).}\label{res-tree-dr46}
\end{center}
\end{figure}
\begin{figure}[ht]
\begin{center}
\resizebox{0.75\textwidth}{!}{%
  \includegraphics{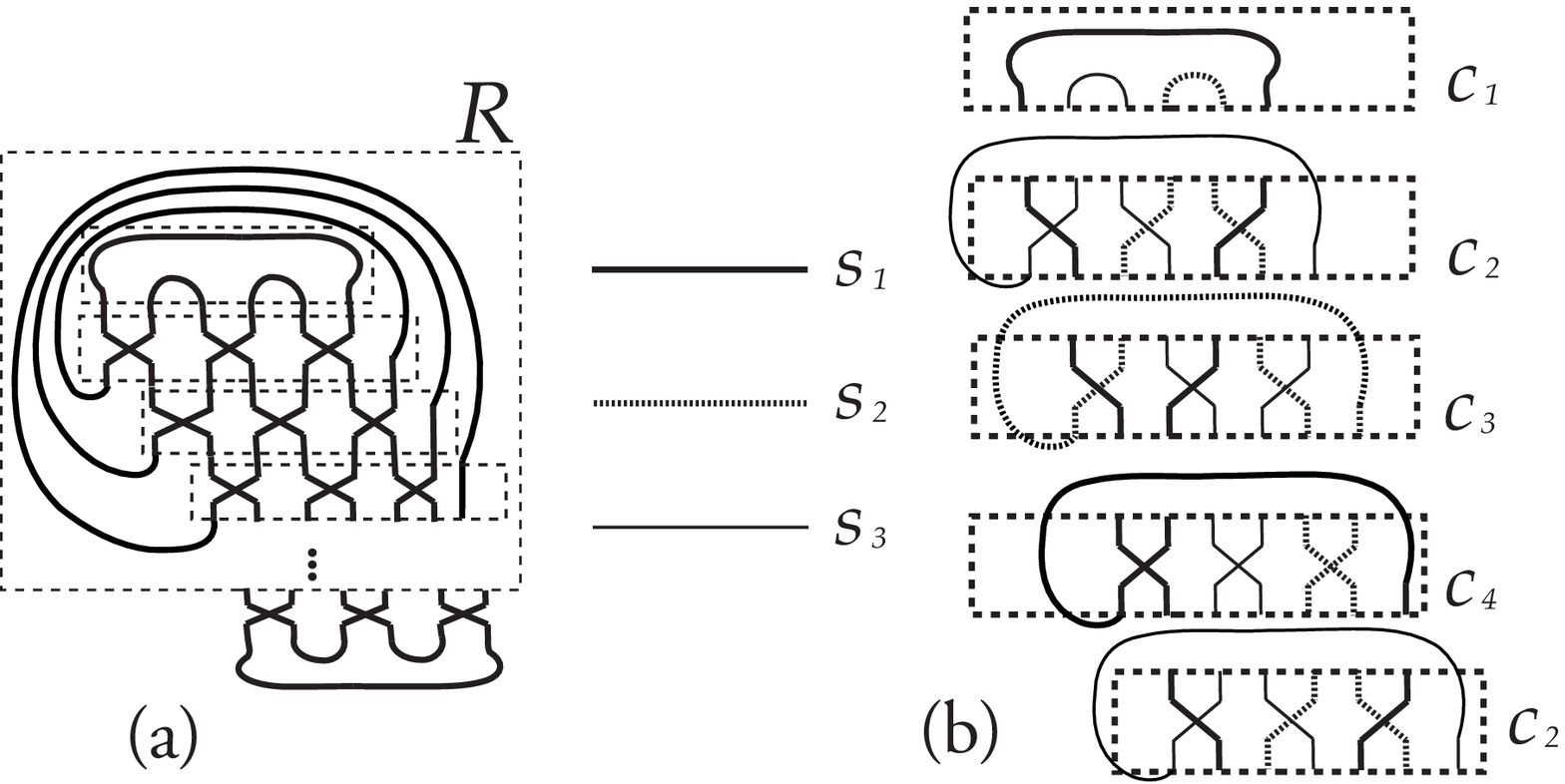}}
\caption{}\label{dr8}
\end{center}
\end{figure}
For a regular projection of $\hat\beta_r$ as shown in Figure \ref{dr8} $(a)$, we observe that there are three arcs, say $S_1, S_2, S_3$, in the dotted rectangle $R$ in Figure \ref{dr8} $(a)$ that are obtained from the arcs in the small dotted rectangles $C_1, C_2, C_3, C_4$ in $R$ as shown in Figure \ref{dr8} $(b)$ by gluing them in the obvious way, written $R=C_1C_2C_3C_4.$ From this, it is not difficult to see in general that \begin{equation}\label{cr-sliding}
\hat\beta_r=C_1C_2C_3C_4C_2C_3C_4\cdots C_m,
\end{equation} where \begin{align*}
 &m=2, ~r\equiv 2~(mod~3),\notag\\
 &m=3,~r\equiv 0~(mod~3),\notag\\
 &m=4, ~r\equiv 1~(mod~3).
\end{align*}
Pushing each crossing labeled 1, 2, 3 into the part of $\Omega_{r-2}$ along the 2-parallel strings, it follows from (\ref{cr-sliding}) that it returns to the arrow labeled 4, 5, 6 in the $(r-1)$-th row, respectively, illustrated in (a), (b) and (c) of Figure \ref{res-tree-dr46} according as the case $r\equiv 2$ (mod 3), $r\equiv 0$ (mod 3) and $r\equiv 1$ (mod 3). 

Now, by a similar argument in the proof of Proposition \ref{prop3-cr-nbr-cg-wd} (2), the full twists in $a_{16}^{i}$ can be removed from without contributing to $\max\deg_zP_{a_{16}^{i}}(v,z)$ for each $i=1,2,3$ and so we obtain
$$\max\deg_zP_{a_{16}^{i}}(v,z)=\max\deg_zP_{a_{17}}(v,z),$$ where $a_{17}$ is the link diagram as shown in Figure \ref{res-tree-dr4-1}.

\begin{figure}[ht]
\begin{center}
\resizebox{0.30\textwidth}{!}{%
  \includegraphics{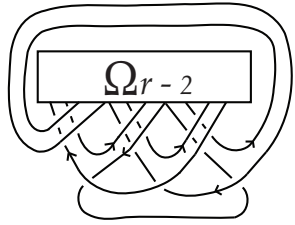}}
\caption{A partial skein tree for $a_{17}$.}\label{res-tree-dr4-1}
\end{center}
\end{figure}

On the other hand, by Morton's inequality, we obtain
\begin{align}
\max\deg_zP_{a_{17}}(v,z)
&\leq (c(D_r)-9)-(s(D_r)-5)+1
=N_r-4.\label{eq3-pf-(1)-lem-19}
\end{align}
Then it is direct from (\ref{eq3-pf-(1)-lem-18}) and (\ref{eq3-pf-(1)-lem-19}) that
\begin{equation}\label{eq3-pf-(1)-lem-20}
\max\deg_zP_{a_{9}}(v,z) \leq N_r-4.
\end{equation}
Therefore
we have from (\ref{eq3-pf-(1)-lem-1})-(\ref{eq3-pf-(1)-lem-7}) and (\ref{eq3-pf-(1)-lem-20}) that
\begin{align*}
\max\deg_zP_{a_{6}}(v,z) &\leq
\max\{M(a_{12}), M(a_{11})+1, M(a_{10}))+2, M(a_{9})+1\}\notag\\
&\leq \max\{N_r-3, N_r-3, N_r-3, N_r-3)\}
= N_r-3.
\end{align*}
This completes the proof of Claim 1. \qed

\bigskip

\noindent{\bf Proof of (2).} From a partial skein tree for $D^5_r$ as shown in Figure \ref{res-tree-dr5}, we obtain
\begin{equation*}
P_{D_r^5}(v,z)=v^2P_{b_{2}}(v,z)+vzP_{b_1}(v,z).
\end{equation*}
\begin{figure}[ht]
\begin{center}
\resizebox{0.50\textwidth}{!}{%
  \includegraphics{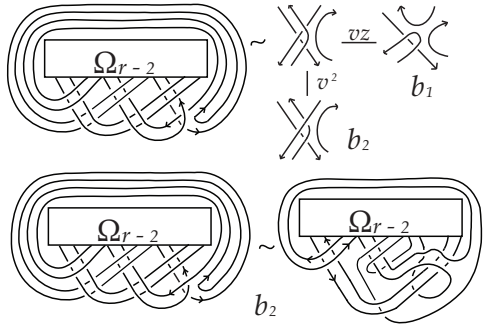}}
\caption{A partial skein tree for $D_{r}^{5}$.}\label{res-tree-dr5}
\end{center}
\end{figure}
It is quite easy to see that the link $b_1$ does not contribute anything to $\max\deg_zP_{D_{r}^{5}}(v,z)$.
By Morton's inequality, we obtain
\begin{align*}
\max\deg_zP_{D_{r}^{5}}(v,z)&=\max\deg_zP_{b_{2}}(v,z)\\
&\leq (c(D_r)-4)-(s(D_r)-1)+1=N_r-3.
\end{align*}
This completes the proof of (2). \qed

\bigskip

\noindent{\bf Proof of (3).} It follows from Morton's inequality that
\begin{align*}
\max\deg_zP_{D^6_r}(v,z)
&\leq (c(D_r)-5)-(s(D_r)-2)+1=N_r-3.
\end{align*}
This completes the proof of (3). \qed

\bigskip

\noindent{\bf Proof of (4).}
By Morton's inequality and isotopy deformations as shown in Figure \ref{res-tree-dr7}, we obtain
\begin{figure}[ht]
\begin{center}
\resizebox{0.50\textwidth}{!}{%
  \includegraphics{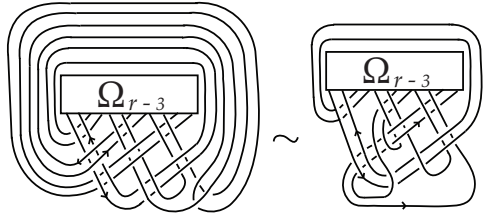}}
\caption{A partial skein tree for $D_{r}^{7}$.}\label{res-tree-dr7}
\end{center}
\end{figure}
\begin{align*}
\max\deg_zP_{D^7_r}(v,z)
&\leq (c(D_r)-4)-(s(D_r)-1)+1=N_r-3.
\end{align*}
This completes the proof of (4). \qed

\bigskip

\noindent{\bf Proof of (5).} It follows from Morton's inequality that
\begin{align*}
\max\deg_zP_{D^8_r}(v,z)
&\leq (c(D_r)-8)-(s(D_r)-4)+1=N_r-4.
\end{align*}
This completes the proof of (5). \qed


\bigskip

{\bf Acknowledgements.}
This work was supported by Basic Science Research Program through the National Research Foundation of Korea (NRF) funded by the Ministry of Education, Science and Technology (2010-0011225).


\end{document}